\newtheorem{theorem}{Theorem}[section]
\newtheorem{lemma}[theorem]{Lemma}
\newtheorem{corollary}[theorem]{Corollary}
\newtheorem{proposition}[theorem]{Proposition}
\newtheorem{result}[theorem]{Result}
\newdefinition{definition}[theorem]{Definition}
\newdefinition{example}[theorem]{Example}
\newdefinition{xca}[theorem]{Exercise}
\newdefinition{remark}[theorem]{Remark}
\newproof{proof}{Proof}
\providecommand{\tsc}[1]{{\text{\sc#1}}}
\providecommand{\tscn}[2]{{\text{{\sc#1}{\small#2}}}}
\providecommand{\etal}{et al.}
\providecommand{\tabref}[1]{{\textup{(Tab.~\ref{#1})}}}
\providecommand{\thmref}[1]{{\textup{(Theorem~\ref{#1})}}}
\providecommand{\lemref}[1]{{\textup{(Lemma~\ref{#1})}}}
\providecommand{\crlref}[1]{{\textup{(Corollary~\ref{#1})}}}
\providecommand{\prpref}[1]{{\textup{(Proposition~\ref{#1})}}}
\providecommand{\defref}[1]{{\textup{(Definition~\ref{#1})}}}
\providecommand{\rmkref}[1]{{\textup{(Remark~\ref{#1})}}}
\providecommand{\thmrefnp}[1]{{\textup{Theorem~\ref{#1}}}}
\providecommand{\lemrefnp}[1]{{\textup{Lemma~\ref{#1}}}}
\providecommand{\crlrefnp}[1]{{\textup{Corollary~\ref{#1}}}}
\providecommand{\prprefnp}[1]{{\textup{Proposition~\ref{#1}}}}
\providecommand{\defrefnp}[1]{{\textup{Definition~\ref{#1}}}}
\providecommand{\rmkrefnp}[1]{{\textup{Remark~\ref{#1}}}}
\providecommand{\const}{{\rm const}}
\providecommand{\strlngfk}[2]{\genfrac{[}{]}{0pt}{}{#1}{#2}}
\begin{document}

\begin{frontmatter}

\title{Approximation error of the Lagrange reconstructing polynomial}

\author{G.A. Gerolymos}
\ead{georges.gerolymos@upmc.fr}
\address{Universit\'e Pierre-et-Marie-Curie (\tsc{upmc}),
         Case 161, 4 place Jussieu, 75005 Paris, France}
\journal{J. Approx. Theory}

\begin{keyword}
reconstruction \sep (Lagrangian) interpolation and reconstruction \sep hyperbolic \tsc{pde}s \sep finite differences \sep finite volumes
\MSC 65D99 \sep 65D05 \sep 65D25 \sep 65M06 \sep 65M08
\end{keyword}

\begin{abstract}
The reconstruction approach [Shu C.W.: {\em SIAM Rev.} {\bf 51} (2009) 82--126] for the numerical approximation of $f'(x)$
is based on the construction of a dual function $h(x)$ whose sliding averages over the interval $[x-\tfrac{1}{2}\Delta x,x+\tfrac{1}{2}\Delta x]$ are equal to $f(x)$
(assuming an homogeneous grid of cell-size $\Delta x$).
We study the deconvolution problem [Harten A., Engquist B., Osher S., Chakravarthy S.R.: {\em J. Comp. Phys.} {\bf 71} (1987) 231--303]
which relates the Taylor polynomials of $h(x)$ and $f(x)$, and obtain its explicit solution,
by introducing rational numbers $\tau_n$ defined by a recurrence relation, or determined by their generating function, $g_\tau(x)$, related with the reconstruction pair of ${\rm e}^x$.
We then apply these results to the specific case of Lagrange-interpolation-based polynomial reconstruction, and determine explicitly the approximation error of the Lagrange reconstructing
polynomial (whose sliding averages are equal to the Lagrange interpolating polynomial) on an arbitrary stencil defined on a homogeneous grid.
\end{abstract}

\end{frontmatter}

%
%
%
%
%
%
%
%
%
\section{Introduction}\label{AELRP_s_I}
%
%
%
%
%
%
%
%
%

The Godunov approach~\cite{Toro_1997a} to hyperbolic conservation laws
\begin{equation}
\partial_t u+\partial_x F(u)=0
                                                                                                       \label{Eq_AELRP_s_I_001}
\end{equation}
is based on space-time averaging of the \tsc{pde}~\eqref{Eq_AELRP_s_I_001}. Assuming an homogeneous time-independent grid ($\Delta x ={\rm const}$),
space-averaging of~\eqref{Eq_AELRP_s_I_001}, over the interval $[x-\tfrac{1}{2}\Delta x,x+\tfrac{1}{2}\Delta x]$,
leads to the \underline{exact} relation~\cite{Toro_1997a}
\begin{alignat}{6}
\frac{\partial}{\partial t}\bar u(x,t)+\frac{1}{\Delta x}\left[F\left(u(x+\tfrac{1}{2}\Delta x,t)\right)-F\left(u(x-\tfrac{1}{2}\Delta x,t)\right)\right]=0
                                                                                                       \label{Eq_AELRP_s_I_002}
\end{alignat}
where
\begin{equation}
\bar u(x,t):=\int_{-\frac{1}{2}}^{+\frac{1}{2}} u(x+\xi\Delta x,t)d\xi
                                                                                                       \label{Eq_AELRP_s_I_003}
\end{equation}
are the sliding cell-averages of the solution. Defining the sliding cell-averages $\overline{F(u)}$, by applying the operator \eqref{Eq_AELRP_s_I_003} on $F(u)$,
we have immediately by differentiation, provided that $\Delta x ={\rm const}$,
\begin{equation}
\frac{\partial\overline{F\left(u(x,t)\right)}}{\partial x}=\dfrac{F\Bigl(u(x+\tfrac{1}{2}\Delta x,t)\Bigr)-F\Bigl(u(x-\tfrac{1}{2}\Delta x,t)\Bigr)}{\Delta x}
                                                                                                       \label{Eq_AELRP_s_I_004}
\end{equation}
exactly, so that, combining \eqref{Eq_AELRP_s_I_002} and \eqref{Eq_AELRP_s_I_004}
\begin{equation}
\partial_t \bar  u+\partial_x \overline{F(u)}=0
                                                                                                       \label{Eq_AELRP_s_I_005}
\end{equation}
{\em ie} the equation for the sliding cell-averages, for $\Delta x ={\rm const}$, has the same form as the original equation~\cite{Harten_Engquist_Osher_Chakravarthy_1987a}.
For this reason, it is assumed that what is computed (and stored at the nodes of the computational grid~\cite{Shu_1998a,
                                                                                                              Shu_2009a}) are the cell-averages of the solution.

In the discretization of \eqref{Eq_AELRP_s_I_002} we are led to consider the computation of the derivative
of a function $f(x)$ (corresponding to $\bar u$) sampled on the computational grid, by differences at $x\pm\tfrac{1}{2}\Delta x$ of the values
of an unknown function $h(x)$ (corresponding to $u$), which has to be reconstructed~\cite{Harten_Engquist_Osher_Chakravarthy_1987a,
                                                                                          Liu_Osher_Chan_1994a,
                                                                                          Jiang_Shu_1996a,
                                                                                          Shu_1998a,
                                                                                          Shu_2009a}
from the values of its cell-averages sampled on the grid.
In the following, we concentrate on the spatial discretization problem, {\em viz} compute $f'(x)$ via reconstruction of $h(x\pm\tfrac{1}{2}\Delta x)$~\cite{Harten_Engquist_Osher_Chakravarthy_1987a,
                                                                                                                                                            Liu_Osher_Chan_1994a,
                                                                                                                                                            Jiang_Shu_1996a,
                                                                                                                                                            Shu_1998a,
                                                                                                                                                            Shu_2009a}.

Reconstruction~\defref{Def_AELRP_s_RPERR_ss_RP_001} is the basis of \tsc{eno}~\cite{Harten_Osher_1987a,
                                                                                   Harten_Engquist_Osher_Chakravarthy_1987a,
                                                                                   Shu_Osher_1988a,
                                                                                   Shu_Osher_1989a}
and \tsc{weno}~\cite{Liu_Osher_Chan_1994a,
                     Jiang_Shu_1996a,
                     Shu_1998a,
                     Levy_Puppo_Russo_1999a,
                     Balsara_Shu_2000a,
                     Qiu_Shu_2002a,
                     Henrick_Aslam_Powers_2005a,
                     Borges_Carmona_Costa_Don_2008a,
                     Shu_2009a,
                     Gerolymos_Senechal_Vallet_2009a}
schemes.
Although the term polynomial reconstruction is quite general, including Hermite-interpolation~\cite{Qiu_Shu_2003a},
spectral methods~\cite{Gottlieb_Shu_1997a}, and spectral element techniques~\cite{Xu_Liu_Shu_2009a},
we concentrate in the present work on reconstruction approaches based on Lagrangian interpolation of the function averages.
There exist several algorithms for Lagrangian-interpolation-based polynomial reconstruction~\cite{Harten_Engquist_Osher_Chakravarthy_1987a,
                                                                                                  Shu_1998a,
                                                                                                  Shu_2009a},
and these have been successfully used for the construction of progressively higher-order schemes~\cite{Jiang_Shu_1996a,
                                                                                                       Balsara_Shu_2000a,
                                                                                                       Gerolymos_Senechal_Vallet_2009a},
using symbolic calculation~\cite{Henrick_Aslam_Powers_2005a,
                                 Borges_Carmona_Costa_Don_2008a}.
The {\em reconstruction via primitive} approach~\cite{Harten_Engquist_Osher_Chakravarthy_1987a} is probably the most widely used in order-of-accuracy proofs~\cite{Shu_1998a,
                                                                                                                                                                   Shu_2009a},
while the {\em reconstruction via deconvolution} approach~\cite{Harten_Engquist_Osher_Chakravarthy_1987a} has been formulated with respect to the solution of linear systems.
Most of these schemes and associated order-of-accuracy relations~\cite{Jiang_Shu_1996a,
                                                                       Balsara_Shu_2000a,
                                                                       Henrick_Aslam_Powers_2005a,
                                                                       Borges_Carmona_Costa_Don_2008a,
                                                                       Gerolymos_Senechal_Vallet_2009a}
were developed for particular values of the order-parameter $r$ (determining the discretization stencil), using symbolic computation.
On the other hand, analytical relations for the order-of-accuracy of the approximation of $h(x)$, for arbitrary reconstruction-order-parameter $r$ are not available.
To obtain such relations it seems necessary to study in detail the relations between a function $h(x)$ (which is reconstructed) and its cell-averages $f(x)$.
This is the {\em reconstruction via deconvolution} approach defined by Harten \etal~\cite{Harten_Engquist_Osher_Chakravarthy_1987a}. Up to now, these relations were obtained
by solving, using symbolic calculation, the associated linear system~\cite[(3.13b), p. 244]{Harten_Engquist_Osher_Chakravarthy_1987a}, up to a certain order.
Although the solution by symbolic computation of the linear system~\cite[(3.13b), p. 244]{Harten_Engquist_Osher_Chakravarthy_1987a} is not difficult,
it is only valid up to a certain $O(\Delta x^q)$,
and the non availability of an explicit solution hinders the development of general expressions of the approximation error of the reconstruction.

Along with the numerous successful developments of practical \tsc{weno} schemes based on the reconstruction via primitive approach~\cite{Shu_1998a,
                                                                                                                                         Shu_2009a},
the unknown function $h(x)$ which is reconstructed by its cell-averages $f(x)$
appears explicitly in recent analyses~\cite{Henrick_Aslam_Powers_2005a,
                                            Borges_Carmona_Costa_Don_2008a}
of the truncation error. Analyzing the reconstruction error in terms of the unknown function $h(x)$ and its derivatives ({\em reconstruction via deconvolution}~\cite{Harten_Engquist_Osher_Chakravarthy_1987a})
is a more intuitive approach,
especially when considering the discretization error of the \tsc{weno} approximation to $f'(x)$ and potential improvements in the formulation of the
nonlinear weights~\cite{Henrick_Aslam_Powers_2005a,
                        Borges_Carmona_Costa_Don_2008a}.
Analyses based on the {\em reconstruction via primitive}~\cite{Harten_Engquist_Osher_Chakravarthy_1987a} approach are somehow less straightforward as they involve the
primitive of the reconstructed function $\int_{x_1}^{x} h(\zeta)d\zeta$.\footnote{\label{ff_AELRP_s_I_001}The lower bound of the integral being of no particular consequence~\cite{Shu_1998a,
                                                                                                                                                                                  Shu_2009a,
                                                                                                                                                                                  Liu_Shu_Zhang_2009a}
                                                                                                         we can chose the coordinate of grid-point $x_1$ instead of the usual (but more abstract)
                                                                                                         lower bound at $-\infty$~\cite{Shu_1998a,
                                                                                                                                        Shu_2009a,
                                                                                                                                        Liu_Shu_Zhang_2009a}.
                                                                                 }
The main motivation of the present work is to contribute to the development of analytical tools, applicable to the study of the truncation error~\cite{Henrick_Aslam_Powers_2005a,
                                                                                                                                                       Borges_Carmona_Costa_Don_2008a},
determination of the loss-of-accuracy at smooth extrema~\cite{Henrick_Aslam_Powers_2005a}
and research for the improvement of \tsc{weno} schemes~\cite{Borges_Carmona_Costa_Don_2008a},
maintaining the in-built scalability (with the stencil width) towards higher order-of-accuracy of \tsc{weno} schemes~\cite{Jiang_Shu_1996a,
                                                                                                                            Balsara_Shu_2000a,
                                                                                                                            Gerolymos_Senechal_Vallet_2009a}.
For these reasons, in the present work we are not interested in the development of a new algorithm for the solution of the reconstruction problem.
Instead, we focus on reconstruction relations of general validity, {\em ie} stencil-independent, which are necessary for the study of the approximate reconstruction order-of-accuracy.

In \S\ref{AELRP_s_RPERR} we study the general relations underlying the reconstruction approach for the numerical approximation
of the 1-derivative $f'(x)$ of a function $f(x)$. Initially we study the relations between the derivatives of a function $f(x)$ and those of a dual function
$h(x)$, whose sliding averages, over a constant length $\Delta x$, are equal to $f(x)$. We will call the functions, $f(x)$ and $h(x)$, satisfying this relation a reconstruction pair
for the discretization of $f'(x)$ \defref{Def_AELRP_s_RPERR_ss_RP_001}. We introduce the rational numbers $\tau_n\in{\mathbb Q}$, defined either by a recurrence relation \lemref{Lem_AELRP_s_RPERR_ss_D_001}
or through a generating function \thmref{Thm_AELRP_s_RPERR_ss_GFtaunRPexp_001}, which are used to develop explicit series representations of $h(x)$ (and of its derivatives) with respect to powers of $\Delta x$ and
the derivatives of $f(x)$. The principal new result in \S\ref{AELRP_s_RPERR} is that we are able to give explicit solutions to the fundamental relations of the
reconstruction via deconvolution approach~\cite[(3.13), pp. 244--246]{Harten_Engquist_Osher_Chakravarthy_1987a}, which \lemref{Lem_AELRP_s_RPERR_ss_D_001} are widely used throughout the paper.
The generating function of the rational numbers $\tau_n\in{\mathbb Q}$ appears in the expression of the reconstruction pair of ${\rm e}^x$ \thmref{Thm_AELRP_s_RPERR_ss_GFtaunRPexp_001}.

In \S\ref{AELRP_s_RP} we study the particular case of polynomial reconstruction. We show \lemref{Lem_AELRP_s_RP_ss_PRP_001} that for every polynomial $p_f(x)$ of degree $M$ in $x$
we can define, using the numbers $\tau_n$ \lemref{Lem_AELRP_s_RPERR_ss_D_001}, a polynomial $p_h(x)$, also of degree $M$ in $x$, so that $p_f$ and $p_h$ are a unique reconstruction
pair \defref{Def_AELRP_s_RPERR_ss_RP_001}. Initially (\S\ref{AELRP_s_RP_ss_MIPPRPLem}) the numbers $\tau_n$ \lemref{Lem_AELRP_s_RPERR_ss_D_001} were introduced, using a matrix algebra approach to study
the relation between $p_f(x)$ and $p_h(x)$. This part of the paper (\S\ref{AELRP_s_RP_ss_MIPPRPLem}) gives the explicit inversion of the matrix appearing in the
reconstruction via deconvolution theory~\cite[(3.13b), p. 244]{Harten_Engquist_Osher_Chakravarthy_1987a}.

In practice $f(x)$ is usually approximated by its Lagrange interpolating polynomial $p_f(x;\tsc{s}_{i,M_-,M_+},\Delta x)$ on a given stencil $\tsc{s}_{i,M_-,M_+}$ \defref{Def_AELRP_s_EPR_ss_PR_001},
and $h(x)$ is approximated by the reconstruction pair of $p_f(x;\tsc{s}_{i,M_-,M_+},\Delta x)$, $p_h(x;\tsc{s}_{i,M_-,M_+},\Delta x)$, which we will call the {\em Lagrange reconstructing polynomial}~\cite{Shu_2009a}.
In \S\ref{AELRP_s_EPR} we study the approximation error of the Lagrange reconstructing polynomial, 
$E_h(x;\tsc{s}_{i,M_-,M_+},\Delta x):= p_h(x;\tsc{s}_{i,M_-,M_+},\Delta x)-h(x)$, and obtain an explicit relation for the expansion of this error in powers of $\Delta x$ \prpref{Prp_AELRP_s_EPR_ss_AELPR_002}.
This is only possible through the explicit solution of the deconvolution problem \lemref{Lem_AELRP_s_RPERR_ss_D_001}.
In \S\ref{AELRP_s_IRP} we briefly summarize the existence and uniqueness results concerning the reconstructing polynomial.
Finally, in \S\ref{AELRP_s_EA} we briefly describe some applications of the present results to practical \tsc{weno} schemes, and discretization methods in general,
highlighting the merits of the reconstruction via deconvolution approach, as developed in the present work.

Standard results referring to the Lagrange interpolating polynomial~\cite{Henrici_1964a,
                                                                          Phillips_2003a}
are included only when they are necessary for the proof of the new results concerning the reconstructing polynomial.
Useful relations for summation indices in multiple sums~\cite{Knuth_1992a,
                                                              Graham_Knuth_Patashnik_1994a},
and other identities,
used throughout the paper, are summarized in \ref{AELRP_s_AppendixA}.

%
%
%
%
%
%
%
%
%
\section{Reconstruction pairs and exact reconstruction relations}\label{AELRP_s_RPERR}
%
%
%
%
%
%
%
%
%

Before proceeding to a detailed examination of the reconstruction of polynomials we examine the general
relations underlying the reconstruction approach for the evaluation of the derivative $f'(x)$ of a function $f(x)$,
via the construction of a function $h(x)$ (reconstruction pair of $f(x)$;~\defrefnp{Def_AELRP_s_RPERR_ss_RP_001}), whose sliding (with $x$) averages~\cite{Shu_1998a,
                                                                                                                                                          Shu_2009a}
on the interval $[x-\tfrac{1}{2}\Delta x,x+\tfrac{1}{2}\Delta x]$ are equal to $f(x)$, over an appropriate interval $x\in I\subset{\mathbb R}$.
We express in particular the derivatives of $h(x)$ as series of the derivatives of $f(x)$, with coefficients determined
by the derivatives at $\Delta x=0$ of the function $g_\tau(\Delta x)$ appearing in the reconstruction pair of the exponential function~\thmref{Thm_AELRP_s_RPERR_ss_GFtaunRPexp_001}.

%
%
%
%
%
\subsection{Reconstruction pairs}\label{AELRP_s_RPERR_ss_RP}
%
%
%
%
%

The basic idea underlying reconstruction procedures to compute the derivative $f'(x)$ of a function $f(x)$
follows directly from the Leibniz rule~\cite[pp. 411--412]{Zorich_2004b} giving the derivative of a definite integral
with respect to its (variable) bounds. To this end we need to construct a function $h(x)$ whose sliding (with $x$) average over an
interval $[x-\frac{1}{2}\Delta x,x+\frac{1}{2}\Delta x]$ of \underline{constant} width $\Delta x$ is equal to $f(x)$.
%
\begin{definition}[Reconstruction pair]
\label{Def_AELRP_s_RPERR_ss_RP_001}
Assume that $\Delta x\in{\mathbb R}_{>0}$ is a \underline{constant} length, and that the functions
$f:I\longrightarrow{\mathbb R}$ and $h:I\longrightarrow{\mathbb R}$ are defined on the interval $I=[a-\frac{1}{2}\Delta x,b+\frac{1}{2}\Delta x]\subset{\mathbb R}$,
satisfying everywhere
\begin{subequations}
                                                                                                       \label{Eq_Def_AELRP_s_RPERR_ss_RP_001_001}
\begin{equation}
f(x)=\dfrac{1}{\Delta x}\int_{x-\frac{1}{2}\Delta x}^{x+\frac{1}{2}\Delta x}{h(\zeta)d\zeta}\quad\forall x\in[a,b]
                                                                                                       \label{Eq_Def_AELRP_s_RPERR_ss_RP_001_001a}
\end{equation}
assuming the existence of the integral in~\eqref{Eq_Def_AELRP_s_RPERR_ss_RP_001_001a}.
We will note the functions $f(x)$ and $h(x)$ related by~\eqref{Eq_Def_AELRP_s_RPERR_ss_RP_001_001a}
\begin{alignat}{6}
h=&R_{(1;\Delta x)}(f)
                                                                                                       \label{Eq_Def_AELRP_s_RPERR_ss_RP_001_001b}\\
f=&R^{-1}_{(1;\Delta x)}(h)
                                                                                                       \label{Eq_Def_AELRP_s_RPERR_ss_RP_001_001c}
\end{alignat}
\end{subequations}
and will call $f$ and $h$
a reconstruction pair on $[a,b]$,
in view of the computation of the 1-derivative.\qed
\end{definition}
%
By definition \eqref{Eq_Def_AELRP_s_RPERR_ss_RP_001_001a}, $R^{-1}_{(1;\Delta x)}$ \eqref{Eq_Def_AELRP_s_RPERR_ss_RP_001_001c} is defined by
\begin{equation}
[R^{-1}_{(1;\Delta x)}(h)](x):=\dfrac{1}{\Delta x}\int_{x-\frac{1}{2}\Delta x}^{x+\frac{1}{2}\Delta x}{h(\zeta)d\zeta}
\quad\left\{\begin{array}{l}\forall x\in[a,b]\\
                            ~                \\
                            \left\{h:[a-\frac{1}{2}\Delta x,b+\frac{1}{2}\Delta x]\longrightarrow{\mathbb R}\;
                                  \Big|\displaystyle \int_{x-\frac{1}{2}\Delta x}^{x+\frac{1}{2}\Delta x} {h(\zeta)d\zeta}\;\in{\mathbb R}\;\forall x\in[a,b]\right\}\\\end{array}\right.
                                                                                                       \label{Eq_AELRP_s_RPERR_ss_RP_001}
\end{equation}
{\em ie} $R^{-1}_{(1;\Delta x)}$ is a mapping applicable to all real functions defined on $[a-\frac{1}{2}\Delta x,b+\frac{1}{2}\Delta x]\subset{\mathbb R}$
for which the integral \eqref{Eq_Def_AELRP_s_RPERR_ss_RP_001_001a} exists. Then, $R_{(1;\Delta x)}$ is defined as the inverse mapping of $R^{-1}_{(1;\Delta x)}$ \eqref{Eq_AELRP_s_RPERR_ss_RP_001},
assuming that the inverse mapping exists ({\em cf} \rmkrefnp{Rmk_AELRP_s_RPERR_ss_D_002}).
%
\begin{lemma}[Reconstruction]
\label{Lem_AELRP_s_RPERR_ss_RP_001}
Consider the functions $f(x)$ and $h(x)$ constituting a reconstruction pair on $[a,b]\subset{\mathbb R}$~\defref{Def_AELRP_s_RPERR_ss_RP_001}.
Assume that $f(x)$ and $h(x)$ are of class $C^N$ \textup{(}$N\in{\mathbb N}$\textup{)} on the interval $I=[a-\frac{1}{2}\Delta x,b+\frac{1}{2}\Delta x]\subset{\mathbb R}$.
Then
\begin{equation}
f^{(n)}(x)=\dfrac{h^{(n-1)}(x+\frac{1}{2}\Delta x)-h^{(n-1)}(x-\frac{1}{2}\Delta x)}{\Delta x}\;\forall x\in[a,b]\;\forall n\in\{1,\cdots,N\}
                                                                                                       \label{Eq_Lem_AELRP_s_RPERR_ss_RP_001_001}
\end{equation}
\end{lemma}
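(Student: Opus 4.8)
The plan is to derive the case $n=1$ directly from the Leibniz differentiation rule for an integral with variable limits, and then to obtain every $n\in\{2,\dots,N\}$ by iterating the differentiation (equivalently, by a one-line induction on $n$). Concretely, let $H$ be a primitive of $h$ on $I$; it exists and is of class $C^{N+1}$ because $h\in C^N(I)$ and $I$ is an interval. Then the defining relation \eqref{Eq_Def_AELRP_s_RPERR_ss_RP_001_001a} reads
\begin{equation}
\Delta x\,f(x)=H\!\left(x+\tfrac12\Delta x\right)-H\!\left(x-\tfrac12\Delta x\right)\qquad\forall x\in[a,b],
\end{equation}
the shifted arguments $x\pm\tfrac12\Delta x$ lying in $I=[a-\tfrac12\Delta x,b+\tfrac12\Delta x]$ for every $x\in[a,b]$, so that $H$ (hence $h$ and all its derivatives) is evaluated only where it is defined and smooth.

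Since $\Delta x$ is a \underline{constant}, I would differentiate this identity $n$ times with respect to $x$; the chain rule contributes only the inner derivative $\tfrac{d}{dx}(x\pm\tfrac12\Delta x)=1$, so no stray term in $f$ appears on the right-hand side and one gets
\begin{equation}
\Delta x\,f^{(n)}(x)=H^{(n)}\!\left(x+\tfrac12\Delta x\right)-H^{(n)}\!\left(x-\tfrac12\Delta x\right)=h^{(n-1)}\!\left(x+\tfrac12\Delta x\right)-h^{(n-1)}\!\left(x-\tfrac12\Delta x\right).
\end{equation}
The left-hand side makes sense because $f\in C^N([a,b])$ and $n\le N$; the right-hand side because $H\in C^{N+1}(I)$, i.e. $H^{(n)}=h^{(n-1)}$ is continuous for $n\le N$. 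Dividing by the nonzero constant $\Delta x$ gives \eqref{Eq_Lem_AELRP_s_RPERR_ss_RP_001_001} for all $n\in\{1,\dots,N\}$. Presented as an induction on $n$: the base case $n=1$ is the Leibniz rule applied once to \eqref{Eq_Def_AELRP_s_RPERR_ss_RP_001_001a}, and the inductive step differentiates the level-$n$ identity once more, which is licit as long as $n+1\le N$.

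I do not anticipate a genuine obstacle here: the statement is essentially the Leibniz rule together with the chain rule, and the three hypotheses — $\Delta x$ independent of $x$, $f,h\in C^N(I)$, and the choice of $I$ so that the shifts $x\pm\tfrac12\Delta x$ stay inside it — are exactly what each differentiation step requires. The only points worth a sentence of care are that the constancy of $\Delta x$ (built into \defref{Def_AELRP_s_RPERR_ss_RP_001}) is what prevents an extra term in $f$ from appearing on the right, and that at the endpoints $a,b$ the derivatives in \eqref{Eq_Lem_AELRP_s_RPERR_ss_RP_001_001} are one-sided, which is already subsumed in the meaning of $C^N([a,b])$.
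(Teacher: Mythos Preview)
Your argument is correct and is essentially the same as the paper's proof: both differentiate the defining relation \eqref{Eq_Def_AELRP_s_RPERR_ss_RP_001_001a} using the Leibniz rule (your introduction of the primitive $H$ is just the explicit mechanism behind that rule), note that the constancy of $\Delta x$ kills any extra terms, and then iterate the differentiation to reach general $n$. The paper states this more tersely, but the content is identical.
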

%
%
\begin{proof}
Direct differentiation of~\eqref{Eq_Def_AELRP_s_RPERR_ss_RP_001_001a}, yields
\begin{equation}
f'(x)=\dfrac{h(x+\frac{1}{2}\Delta x)-h(x-\frac{1}{2}\Delta x)}{\Delta x}\;\forall x\in[a,b]
                                                                                                       \label{Eq_Lem_AELRP_s_RPERR_ss_RP_001_002}
\end{equation}
by application of the Leibniz rule~\cite[pp. 411--412]{Zorich_2004b}, and taking into account that $\Delta x$ is constant $\forall x$.
Successive differentiation of~\eqref{Eq_Lem_AELRP_s_RPERR_ss_RP_001_002} yields~\eqref{Eq_Lem_AELRP_s_RPERR_ss_RP_001_001}.\qed
\end{proof}
%

All reconstruction-based approaches~\cite{Harten_Osher_1987a,
                                          Harten_Engquist_Osher_Chakravarthy_1987a,
                                          Liu_Osher_Chan_1994a,
                                          Jiang_Shu_1996a,
                                          Balsara_Shu_2000a,
                                          Titarev_Toro_2004a,
                                          Henrick_Aslam_Powers_2005a,
                                          Dumbser_Kaser_2007a,
                                          Borges_Carmona_Costa_Don_2008a}
for the numerical approximation of \tsc{pde}s
are based on, or can be shown to be related to, \lemrefnp{Lem_AELRP_s_RPERR_ss_RP_001}.
These relations~\eqref{Eq_Lem_AELRP_s_RPERR_ss_RP_001_001} are exact relations concerning the continuous functions $f$ and $h$. When
$f(x)$ and $h(x)$ are numerically approximated consistently, {\em ie} in a way satisfying~\eqref{Eq_Def_AELRP_s_RPERR_ss_RP_001_001} up to a given order $\Delta x^{M+1}$, then~\eqref{Eq_Lem_AELRP_s_RPERR_ss_RP_001_001}
are satisfied up to some order $\leq M+1$.
%
\begin{definition}[Lagrange reconstructing polynomial]
\label{Def_AELRP_s_RPERR_ss_RP_002}
Let $p_f$ be the Lagrange interpolating polynomial of the function $f$ on the arbitrary stencil $\left\{i-M_-,\cdots,i+M_+\right\}$ of $M+1$ equidistant points ($M:=M_-+M_+$) around point $i$.
Its reconstruction pair \defref{Def_AELRP_s_RPERR_ss_RP_001} will be called the Lagrange reconstructing polynomial on the stencil $\left\{i-M_-,\cdots,i+M_+\right\}$.\qed
\end{definition}
%
%
\begin{remark}[Homogeneous grid]
\label{Rmk_AELRP_s_RPERR_ss_RP_001}
The basic relations underlying reconstruction, which are given in \lemrefnp{Lem_AELRP_s_RPERR_ss_RP_001}, hold iff $\Delta x ={\rm const}$, {\em ie}, when used as basis for the numerical approximation
of $f'(x)$, these relations are only applicable on a homogeneous grid. In the case of an inhomogeneous grid, where the spacing $\Delta x(x)$ is a function of position ($\Delta x: {\mathbb R}\longrightarrow{\mathbb R}_{>0}$)
these relations should be modified to include $\Delta x'$ and $(\partial_{\Delta x} h)\Delta x'$. The general case of an inhomogeneous grid requires specific study.\qed
\end{remark}
%

%
%
%
%
%
\subsection{Deconvolution}\label{AELRP_s_RPERR_ss_D}
%
%
%
%
%

Obviously, the relations between $f$ and $h$~\lemref{Lem_AELRP_s_RPERR_ss_RP_001} imply that
the Taylor-polynomials of $f(x)$ can be expressed with respect to the derivatives $h^{(n)}(x\pm\frac{1}{2}\Delta x)$,
which can themselves be replaced by Taylor-polynomials of $h(x)$.
We have
%
\begin{lemma}[Deconvolution of $h=R_{(1;\Delta x)}(f)$]
\label{Lem_AELRP_s_RPERR_ss_D_001}
Let $f(x)$ and $h(x)=[R_{(1;\Delta x)}(f)](x)$ be a reconstruction pair \defref{Def_AELRP_s_RPERR_ss_RP_001},
satisfying the conditions of \lemrefnp{Lem_AELRP_s_RPERR_ss_RP_001}. Then $\forall N_\tsc{tj}\in{\mathbb N}:N_\tsc{tj}<N-1$
\begin{subequations}
                                                                                                       \label{Eq_Lem_AELRP_s_RPERR_ss_D_001_001}
\begin{alignat}{6}
f^{(n)}(x)=&\sum_{\ell=0}^{\lfloor\frac{N_\tsc{tj}}{2}\rfloor}\dfrac{\Delta x^{2\ell}     }
                                                                    {2^{2\ell}\;(2\ell+1)!} h^{(n+2\ell)}(x)+O(\Delta x^{2\lfloor\frac{N_\tsc{tj}}{2}\rfloor+2})
&\;\begin{array}{c}\forall x\in[a,b]                                                \\
                   \forall n\in{\mathbb N}_0:n<N-2\lfloor\frac{N_\tsc{tj}}{2}\rfloor\\\end{array}
                                                                                                       \label{Eq_Lem_AELRP_s_RPERR_ss_D_001_001a}
\end{alignat}
Inversely,
\begin{alignat}{6}
h^{(n)}(x)                       =&\sum_{\ell=0}^{\lfloor\frac{N_\tsc{tj}}{2}\rfloor}\tau_{2\ell}\Delta x^{2\ell}f^{(n+2\ell)}(x)+O(\Delta x^{2\lfloor\frac{N_\tsc{tj}}{2}\rfloor+2})
&\;\begin{array}{c}\forall x\in[a,b]                                                \\
                   \forall n\in{\mathbb N}_0:n<N-2\lfloor\frac{N_\tsc{tj}}{2}\rfloor\\\end{array}
                                                                                                       \label{Eq_Lem_AELRP_s_RPERR_ss_D_001_001b}
\end{alignat}
where the numbers $\tau_{2\ell}$ \tabref{Tab_Lem_AELRP_s_RPERR_ss_D_001_001} are defined by the recurrence relations
\begin{alignat}{6}
\tau_0 &=& 1
&\qquad;\qquad&
\tau_{2k} &=& \sum_{s=0}^{k-1}\frac{-\tau_{2s}           }
                                   {2^{2k-2s}\;(2k-2s+1)!}
          &=& \sum_{s=1}^{k}  \frac{-\tau_{2k-2s}  }
                                   {2^{2s}\;(2s+1)!} &\qquad k > 0
                                                                                                       \label{Eq_Lem_AELRP_s_RPERR_ss_D_001_001c}
\end{alignat}
\end{subequations}
\end{lemma}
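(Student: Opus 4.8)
The plan is to start from the averaging relation \eqref{Eq_Def_AELRP_s_RPERR_ss_RP_001_001a} and Taylor-expand, which will immediately yield \eqref{Eq_Lem_AELRP_s_RPERR_ss_D_001_001a}; the reverse relation \eqref{Eq_Lem_AELRP_s_RPERR_ss_D_001_001b} is then obtained by formally inverting the power series in $\Delta x^2$, and the recurrence \eqref{Eq_Lem_AELRP_s_RPERR_ss_D_001_001c} drops out of the inversion. First I would write, for a $C^N$ function $h$, the Taylor expansion $h(x+\xi\Delta x)=\sum_{j=0}^{N-1} h^{(j)}(x)\xi^j\Delta x^j/j! + O(\Delta x^N)$ and integrate over $\xi\in[-\tfrac12,+\tfrac12]$. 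The odd powers of $\xi$ integrate to zero, and $\int_{-1/2}^{1/2}\xi^{2\ell}\,d\xi = 2^{-2\ell}/(2\ell+1)$, so \eqref{Eq_Def_AELRP_s_RPERR_ss_RP_001_001a} gives $f(x)=\sum_{\ell\ge0} \Delta x^{2\ell} h^{(2\ell)}(x)/\big(2^{2\ell}(2\ell+1)!\big) + O(\Delta x^{\bullet})$. Differentiating this $n$ times in $x$ (legitimate since $f,h\in C^N$, using \lemref{Lem_AELRP_s_RPERR_ss_RP_001} to control the truncation order) produces \eqref{Eq_Lem_AELRP_s_RPERR_ss_D_001_001a}, with the stated range of $n$ and the $O(\Delta x^{2\lfloor N_\tsc{tj}/2\rfloor+2})$ remainder; one just has to track carefully that truncating the $\xi$-expansion at order $N_\tsc{tj}$ and then differentiating $n$ times requires $n+N_\tsc{tj}<N$, consistent with the hypotheses.

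For the inverse relation, introduce the (formal) even power series $A(t):=\sum_{\ell\ge0} t^{2\ell}/\big(2^{2\ell}(2\ell+1)!\big)$ and $B(t):=\sum_{\ell\ge0}\tau_{2\ell}t^{2\ell}$, thought of as operators in $t=\Delta x$ acting on the derivative sequence. Relation \eqref{Eq_Lem_AELRP_s_RPERR_ss_D_001_001a} says $f^{(n)} = A(\Delta x)\, h^{(n)} + O(\cdot)$ in the sense of matching Taylor coefficients in $\Delta x$, and since $A(0)=1$ the series $A$ is invertible; defining $B:=A^{-1}$ and reading off coefficients gives \eqref{Eq_Lem_AELRP_s_RPERR_ss_D_001_001b}. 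The identity $A(t)B(t)=1$ expanded to order $2k$ is exactly $\sum_{s=0}^{k}\tau_{2s}\big/\big(2^{2k-2s}(2k-2s+1)!\big)=\delta_{k,0}$, i.e. $\tau_0=1$ and, for $k>0$, $\tau_{2k}=-\sum_{s=0}^{k-1}\tau_{2s}\big/\big(2^{2k-2s}(2k-2s+1)!\big)$, which is the first form in \eqref{Eq_Lem_AELRP_s_RPERR_ss_D_001_001c}; the second form is the same sum re-indexed by $s\mapsto k-s$. To make the operator manipulation rigorous rather than merely formal, I would instead argue directly: assume \eqref{Eq_Lem_AELRP_s_RPERR_ss_D_001_001b} holds with the $\tau_{2\ell}$ defined by \eqref{Eq_Lem_AELRP_s_RPERR_ss_D_001_001c}, substitute it into the right-hand side of \eqref{Eq_Lem_AELRP_s_RPERR_ss_D_001_001a}, collect the coefficient of each $\Delta x^{2k}f^{(n+2k)}(x)$, and check it equals $\delta_{k,0}$ using precisely the recurrence; an induction on $N_\tsc{tj}$ then closes the argument and controls the remainder term.

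The main obstacle is not any deep idea but the bookkeeping of truncation orders: one must verify that the $O(\Delta x^{2\lfloor N_\tsc{tj}/2\rfloor+2})$ remainders survive $n$ differentiations and the substitution, and that the index constraint $n<N-2\lfloor N_\tsc{tj}/2\rfloor$ is exactly what is needed for every Taylor remainder invoked to be valid under the $C^N$ hypothesis. A secondary point is justifying term-by-term differentiation of the finite truncated sum (trivial, since it is a finite sum of $C^{N-2\ell}$ terms) together with the differentiated Peano/Lagrange remainder; citing \lemref{Lem_AELRP_s_RPERR_ss_RP_001} and the standard form of Taylor's theorem handles this. Everything else — the integral $\int_{-1/2}^{1/2}\xi^{2\ell}d\xi$, the vanishing of odd moments, the re-indexing of the recurrence — is routine.
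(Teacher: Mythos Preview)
Your proposal is correct and follows essentially the same route as the paper: Taylor-expand $h$ inside the averaging integral \eqref{Eq_Def_AELRP_s_RPERR_ss_RP_001_001a}, use that odd moments vanish to obtain \eqref{Eq_Lem_AELRP_s_RPERR_ss_D_001_001a}, then derive \eqref{Eq_Lem_AELRP_s_RPERR_ss_D_001_001b} by substituting one relation into the other and matching coefficients of $\Delta x^{2k}$, which yields the recurrence \eqref{Eq_Lem_AELRP_s_RPERR_ss_D_001_001c}. The only cosmetic difference is that the paper substitutes \eqref{Eq_Lem_AELRP_s_RPERR_ss_D_001_001a} into the ansatz for $h^{(n)}$ (checking $B\circ A=\mathrm{id}$), whereas you propose the reverse direction and add a formal-power-series wrapper; both are equivalent here.
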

%
%
\begin{proof}
Approximating $h(\zeta)$ (which was assumed to be of class $C^N$ in \lemrefnp{Lem_AELRP_s_RPERR_ss_RP_001}) in \eqref{Eq_Def_AELRP_s_RPERR_ss_RP_001_001}
by the corresponding Taylor-polynomial (Taylor-jet) of order $N_\tsc{tj}$~\cite[pp. 219--232]{Zorich_2004a} around $\zeta=x$ yields, $\forall N_\tsc{tj}\in{\mathbb N}: N_\tsc{tj}<N-1$,
\begin{alignat}{6}
f(x)=&\dfrac{1}{\Delta x}\int_{x-\frac{1}{2}\Delta x}^{x+\frac{1}{2}\Delta x}{\left\lgroup\left(
                                                                              \sum_{\ell=0}^{N_\tsc{tj}}\dfrac{(\zeta-x)^\ell}
                                                                                                              {\ell!}       h^{(\ell)}(x)\right)
                                                                             +O\left((\zeta-x)^{N_\tsc{tj}+1}\right)\right\rgroup d\zeta}
                                                                                                       \notag\\
    =&\dfrac{1}{\Delta x}\int_{x-\frac{1}{2}\Delta x}^{x+\frac{1}{2}\Delta x}{\left(\sum_{\ell=0}^{N_\tsc{tj}}\dfrac{(\zeta-x)^\ell}
                                                                                                                    {\ell!}       h^{(\ell)}(x)\right)d\zeta}
                                                                              +O(\Delta x^{N_\tsc{tj}+1})
                                                                                                       \notag\\
    =&\dfrac{1}{\Delta x}\sum_{\ell=0}^{N_\tsc{tj}}\left(\int_{-\frac{1}{2}\Delta x}^{\frac{1}{2}\Delta x}{\dfrac{\eta^\ell}
                                                                                                                 {\ell!}    d\eta}\right)h^{(\ell)}(x)
                                                                              +O(\Delta x^{N_\tsc{tj}+1})
                                                                                                       \notag\\
    =&\dfrac{1}{\Delta x}\sum_{\ell=0}^{N_\tsc{tj}}\left(\dfrac{\Delta x^{\ell+1}}
                                                               {2^{\ell}\;(\ell+1)!}\frac{1-(-1)^{\ell+1}}{2}\right)h^{(\ell)}(x)
                                                                              +O(\Delta x^{N_\tsc{tj}+1})
&\qquad\forall x\in[a,b]
                                                                                                       \label{Eq_Lem_AELRP_s_RPERR_ss_D_001_002}
\end{alignat}
and since $\forall k\in{\mathbb N}_0$
\begin{subequations}
                                                                                                       \label{Eq_Lem_AELRP_s_RPERR_ss_D_001_003}
\begin{alignat}{6}
\ell+1=2k+1         &\quad (k\in{\mathbb N}_0)\quad&\;\Longrightarrow\;& 1-(-1)^{\ell+1}=2
                                                                                                       \label{Eq_Lem_AELRP_s_RPERR_ss_D_001_003a}\\
\ell+1=2k           &\quad (k\in{\mathbb N}_0)\quad&\;\Longrightarrow\;& 1-(-1)^{\ell+1}=0
                                                                                                       \label{Eq_Lem_AELRP_s_RPERR_ss_D_001_003b}
\end{alignat}
\end{subequations}
we obtain
\begin{equation}
f(x)=\sum_{\ell=0}^{\lfloor\frac{N_\tsc{tj}}{2}\rfloor}\dfrac{\Delta x^{2\ell}     }
                                                             {2^{2\ell}\;(2\ell+1)!} h^{(2\ell)}(x)+O(\Delta x^{2\lfloor\frac{N_\tsc{tj}}{2}\rfloor+2})
                                                                                                       \label{Eq_Lem_AELRP_s_RPERR_ss_D_001_004}
\end{equation}
which is \eqref{Eq_Lem_AELRP_s_RPERR_ss_D_001_001a} for $n=0$.
Successive differentiation of~\eqref{Eq_Lem_AELRP_s_RPERR_ss_D_001_004} by $x$ yields~\eqref{Eq_Lem_AELRP_s_RPERR_ss_D_001_001a}.

To invert~\eqref{Eq_Lem_AELRP_s_RPERR_ss_D_001_001a} we search for numbers $\tau_{2s}$ ($s\in{\mathbb N}_0)$ satisfying $\forall M_\tsc{tj}\in{\mathbb N}:M_\tsc{tj}<N-1$
and $\forall n \in{\mathbb N}_0:n<N-2\lfloor \frac{M_\tsc{tj}}{2}\rfloor$
\begin{alignat}{6}
 h^{(n)}(x)=&\sum_{s=0}^{M_\tsc{tj}}\tau_{2s}\Delta x^{2s}f^{(n+2s)}(x)+O(\Delta x^{2M_\tsc{tj}+2})
                                                                                                       \notag\\
           =&\sum_{s=0}^{M_\tsc{tj}}\left(\sum_{\ell=0}^{M_\tsc{tj}}\dfrac{\Delta x^{2\ell}}
                                                                          {2^{2\ell}\;(2\ell+1)!} h^{(n+2s+2\ell)}(x)+O(\Delta x^{2M_\tsc{tj}+2})\right)\tau_{2s}\Delta x^{2s}+O(\Delta x^{2M_\tsc{tj}+2})
                                                                                                       \notag\\
           =&\sum_{s=0}^{M_\tsc{tj}}\sum_{\ell=0}^{M_\tsc{tj}} \left(\dfrac{\tau_{2s}\Delta x^{2s+2\ell}}
                                                                                              {2^{2\ell}\;(2\ell+1)!}\;h^{(n+2s+2\ell)}(x)\right)
                                                                              +O(\Delta x^{2M_\tsc{tj}+2})
                                                                                                       \notag\\
           =&\sum_{k=0}^{2M_\tsc{tj}}\left(\sum_{s=\max(0,k-M_\tsc{tj})}^{\min(k,M_\tsc{tj})}\dfrac{\tau_{2s}          }
                                                                                                   {2^{2k-2s}\;(2k-2s+1)!}\right)\;\Delta x^{2k}\;h^{(n+2k)}(x)
                                                                              +O(\Delta x^{2M_\tsc{tj}+2})
                                                                                                       \notag\\
           =&\sum_{k=0}^{ M_\tsc{tj}}\left(\sum_{s=0}^{k}\dfrac{\tau_{2s}          }
                                                               {2^{2k-2s}\;(2k-2s+1)!}\right)\;\Delta x^{2k}h\;^{(n+2k)}(x)
                                                                              +O(\Delta x^{M_\tsc{tj}+2})
                                                                                                       \label{Eq_Lem_AELRP_s_RPERR_ss_D_001_005}
\end{alignat}
because of \eqref{Eq_AELRP_s_AppendixA_003}. \eqref{Eq_Lem_AELRP_s_RPERR_ss_D_001_005} holds, provided that ($\delta_{k0}$ is the Kronecker $\delta$)
\begin{equation}
\sum_{s=0}^{k}\dfrac{\tau_{2s}           }
                    {2^{2k-2s}\;(2k-2s+1)!} = \delta_{k0} \qquad \forall k \in {\mathbb N}_0
                                                                                                       \label{Eq_Lem_AELRP_s_RPERR_ss_D_001_006}
\end{equation}
which is satisfied if the numbers $\tau_{2k}$ are defined by~\eqref{Eq_Lem_AELRP_s_RPERR_ss_D_001_001c}.
Truncating~\eqref{Eq_Lem_AELRP_s_RPERR_ss_D_001_004} to $O(\Delta x^{2\lfloor\frac{N_\tsc{tj}}{2}\rfloor})$
yields~\eqref{Eq_Lem_AELRP_s_RPERR_ss_D_001_001b}.
The remainder in \eqref{Eq_Lem_AELRP_s_RPERR_ss_D_001_001a} and \eqref{Eq_Lem_AELRP_s_RPERR_ss_D_001_001b} is $O(\Delta x^{2\lfloor\frac{N_\tsc{tj}}{2}\rfloor+2})$,
because only even powers of $\Delta x$ appear in these expressions.
\qed
\end{proof}
%

%
\begin{table}[h!]
\caption{Numbers $\tau_{n}$~\eqref{Eq_Thm_AELRP_s_RPERR_ss_GFtaunRPexp_001_001c} satisfying recurrence~\eqref{Eq_Lem_AELRP_s_RPERR_ss_D_001_001c}, for $0\leq n\leq 21$.}
\begin{center}
\scalebox{1.0}{
\begin{tabular}{lclclc}\hline\addlinespace[2pt]
$\tau_0   =$&$1$                                                                                                                &\\[2pt]
$\tau_1   =$&$0$                                                                                                                &\\[2pt]
$\tau_2   =$&$\tfrac{                                  -1}{                                                                 24}$&\\[2pt]
$\tau_3   =$&$0$                                                                                                                &\\[2pt]
$\tau_4   =$&$\tfrac{                                   7}{                                                              5,760}$&\\[2pt]
$\tau_5   =$&$0$                                                                                                                &\\[2pt]
$\tau_6   =$&$\tfrac{                                 -31}{                                                            967,680}$&\\[2pt]
$\tau_7   =$&$0$                                                                                                                &\\[2pt]
$\tau_8   =$&$\tfrac{                                 127}{                                                        154,828,800}$&\\[2pt]
$\tau_9   =$&$0$                                                                                                                &\\[2pt]
$\tau_{10}=$&$\tfrac{                                 -73}{                                                      3,503,554,560}$&\\[2pt]
$\tau_{11}=$&$0$                                                                                                                &\\[2pt]
$\tau_{12}=$&$\tfrac{                           1,414,477}{                                              2,678,117,105,664,000}$&\\[2pt]
$\tau_{13}=$&$0$                                                                                                                &\\[2pt]
$\tau_{14}=$&$\tfrac{                              -8,191}{                                                612,141,052,723,200}$&\\[2pt]
$\tau_{15}=$&$0$                                                                                                                &\\[2pt]
$\tau_{16}=$&$\tfrac{                          16,931,177}{                                         49,950,709,902,213,120,000}$&\\[2pt]
$\tau_{17}=$&$0$                                                                                                                &\\[2pt]
$\tau_{18}=$&$\tfrac{                      -5,749,691,557}{                                    669,659,197,233,029,971,968,000}$&\\[2pt]
$\tau_{19}=$&$0$                                                                                                                &\\[2pt]
$\tau_{20}=$&$\tfrac{                      91,546,277,357}{                                420,928,638,260,761,696,665,600,000}$&\\[2pt]
$\tau_{21}=$&$0$                                                                                                                &\\[2pt]\hline\addlinespace[2pt]
\end{tabular}
}\\

\end{center}
\label{Tab_Lem_AELRP_s_RPERR_ss_D_001_001}
\end{table}
%
%
\begin{remark}[Relation to previous work~\cite{Harten_Engquist_Osher_Chakravarthy_1987a,
                                               Bianco_Puppo_Russo_1999a}]
\label{Rmk_AELRP_s_RPERR_ss_D_001}
The results in \lemrefnp{Lem_AELRP_s_RPERR_ss_D_001} expressing the derivatives of the sliding cell-averages $f(x)$ with respect to the derivatives of the function $h(x)=[R_{(1;\Delta x)}(f)](x)$, are straightforward.
In particular \eqref{Eq_Lem_AELRP_s_RPERR_ss_D_001_001a} corresponds to \cite[(2.15), p. 299]{Bianco_Puppo_Russo_1999a}.
The new results of \lemrefnp{Lem_AELRP_s_RPERR_ss_D_001} are the inversion relations \eqref{Eq_Lem_AELRP_s_RPERR_ss_D_001_001b},
which are based on the introduction of the numbers $\tau_n$ \eqref{Eq_Lem_AELRP_s_RPERR_ss_D_001_001c}.
These results are the general explicit solution of the linear system written in Harten \etal~\cite[(3.13b), p. 244]{Harten_Engquist_Osher_Chakravarthy_1987a},
and provide the exact deconvolution relation between $f(x)$ and $[R_{(1;\Delta x)}(f)](x)$~\defref{Def_AELRP_s_RPERR_ss_RP_001},
in the case of a homogeneous ($\Delta x={\rm const}$) grid. The general case of an inhomogeneous grid requires specific study.
The inversion relations \eqref{Eq_Lem_AELRP_s_RPERR_ss_D_001_001b} are the main building block of the present work,
as far as error analysis of the reconstruction is concerned. We will show that the numbers $\tau_n$ \eqref{Eq_Lem_AELRP_s_RPERR_ss_D_001_001c}
can also be defined by a generating function \thmref{Thm_AELRP_s_RPERR_ss_GFtaunRPexp_001}.\qed
\end{remark}
%
%
\begin{corollary}[Taylor-polynomial of $h(x+\xi\Delta x)$]
\label{Crl_AELRP_s_RPERR_ss_D_001}
Assume the conditions of \lemrefnp{Lem_AELRP_s_RPERR_ss_D_001}. Then
\begin{equation}
h(x+\xi\Delta x)= \sum_{s=0}^{N_\tsc{tj}}\left(\sum_{\ell=0}^{\lfloor\frac{s}{2}\rfloor}\frac{\tau_{2\ell}\; \xi^{s-2\ell}}
                                                                                             {(s-2\ell)!                  }\right)\Delta x^s\;f^{(s)}(x)+O(\Delta x^{N_\tsc{tj}+1})
                                                                                                       \label{Eq_Crl_AELRP_s_RPERR_ss_D_001_001}
\end{equation}
\end{corollary}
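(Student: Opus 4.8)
The plan is to combine the Taylor expansion of $h(x+\xi\Delta x)$ about $\zeta=x$ with the deconvolution relation \eqref{Eq_Lem_AELRP_s_RPERR_ss_D_001_001b} from \lemrefnp{Lem_AELRP_s_RPERR_ss_D_001}, and then reorganize the resulting double sum by collecting equal powers of $\Delta x$.

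First I would write the Taylor--jet of $h$ of order $N_\tsc{tj}$ around $\zeta=x$, evaluated at $\zeta=x+\xi\Delta x$, namely
\begin{equation}
h(x+\xi\Delta x)=\sum_{m=0}^{N_\tsc{tj}}\frac{\xi^m\,\Delta x^m}{m!}\,h^{(m)}(x)+O(\Delta x^{N_\tsc{tj}+1}),
                                                                                                       \notag
\end{equation}
which is legitimate since $h\in C^N$ with $N_\tsc{tj}<N-1$. Next I would substitute into each $h^{(m)}(x)$ the explicit expression \eqref{Eq_Lem_AELRP_s_RPERR_ss_D_001_001b}, i.e. $h^{(m)}(x)=\sum_{\ell\ge0}\tau_{2\ell}\Delta x^{2\ell}f^{(m+2\ell)}(x)+O(\Delta x^{\cdots})$, keeping enough terms that the overall remainder stays $O(\Delta x^{N_\tsc{tj}+1})$ (one must be a little careful here: \eqref{Eq_Lem_AELRP_s_RPERR_ss_D_001_001b} is stated with an even-order truncation, but since the neglected terms in $h^{(m)}$ are multiplied by $\xi^m\Delta x^m/m!$, taking the deconvolution to order $N_\tsc{tj}-m$ in each summand suffices). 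This yields a double sum $\sum_m\sum_\ell \dfrac{\tau_{2\ell}\,\xi^m}{m!}\,\Delta x^{m+2\ell}\,f^{(m+2\ell)}(x)$.

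Then I would change summation variable from $(m,\ell)$ to $(s,\ell)$ with $s=m+2\ell$, so $m=s-2\ell$ and $f^{(m+2\ell)}=f^{(s)}$. The constraint $m\ge0$ becomes $\ell\le s/2$, i.e. $\ell\le\lfloor s/2\rfloor$ since $\ell$ is an integer; the constraint $m\le N_\tsc{tj}$ together with $s\le N_\tsc{tj}$ is handled by the truncation bookkeeping. This reindexing — the kind of manipulation flagged as available from \ref{AELRP_s_AppendixA} — gives exactly
\begin{equation}
h(x+\xi\Delta x)=\sum_{s=0}^{N_\tsc{tj}}\left(\sum_{\ell=0}^{\lfloor s/2\rfloor}\frac{\tau_{2\ell}\,\xi^{s-2\ell}}{(s-2\ell)!}\right)\Delta x^s\,f^{(s)}(x)+O(\Delta x^{N_\tsc{tj}+1}),
                                                                                                       \notag
\end{equation}
which is \eqref{Eq_Crl_AELRP_s_RPERR_ss_D_001_001}.

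The main obstacle is the remainder bookkeeping, not the algebra: one has to verify that substituting the finite-order deconvolution \eqref{Eq_Lem_AELRP_s_RPERR_ss_D_001_001b} into a finite-order Taylor jet, and then truncating the product, genuinely leaves an $O(\Delta x^{N_\tsc{tj}+1})$ error with no lost terms, and that the $C^N$ hypothesis with $N_\tsc{tj}<N-1$ is enough smoothness for every derivative $f^{(s)}$, $s\le N_\tsc{tj}$, that appears. Everything else is the index change $s=m+2\ell$ and relabelling. Alternatively, one could bypass Taylor-expanding $h$ by differentiating the corollary's claimed formula and matching against \lemrefnp{Lem_AELRP_s_RPERR_ss_RP_001}, but the direct substitution route above is the cleanest.
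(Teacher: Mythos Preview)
Your proposal is correct and follows essentially the same route as the paper: Taylor-expand $h(x+\xi\Delta x)$ about $x$, substitute the deconvolution relation \eqref{Eq_Lem_AELRP_s_RPERR_ss_D_001_001b} for each $h^{(m)}(x)$, and reindex via $s=m+2\ell$ using the summation identities of \ref{AELRP_s_AppendixA}. The paper handles the remainder bookkeeping you flag by noting $N_\tsc{tj}+1\le 2\lfloor N_\tsc{tj}/2\rfloor+2$, so that truncating the inner deconvolution at order $\lfloor (N_\tsc{tj}-m)/2\rfloor$ keeps every neglected term at $O(\Delta x^{N_\tsc{tj}+1})$ or higher.
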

%
%
\begin{proof}
Since
\begin{subequations}
                                                                                                       \label{Eq_Crl_AELRP_s_RPERR_ss_D_001_002}
\begin{alignat}{6}
2\left\lfloor\frac{n}{2}\right\rfloor+2=\left\{\begin{array}{cclcl}n+1&\quad&\forall n=2k-1&\quad&k\in{\mathbb N}\\
                                                                   n+2&\quad&\forall n=2k  &\quad&k\in{\mathbb N}\\\end{array}\right.
                                                                                                       \label{Eq_Crl_AELRP_s_RPERR_ss_D_001_002a}
\end{alignat}
\eqref{Eq_Lem_AELRP_s_RPERR_ss_D_001_001b} can be rewritten as
\begin{alignat}{6}
\frac{\Delta x^m h^{(m)}(x)}
     {m!                   }=&\sum_{\ell=0}^{\lfloor\frac{N_\tsc{tj}-m}{2}\rfloor}\frac{\tau_{2\ell}(m+2\ell)!}
                                                                                     {m!                    }\frac{\Delta x^{m+2\ell}f^{(m+2\ell)}(x)}
                                                                                                                  {(m+2\ell)!                        }+O(\Delta x^{2\lfloor\frac{N_\tsc{tj}}{2}\rfloor+2})
                                                                                                       \label{Eq_Crl_AELRP_s_RPERR_ss_D_001_002b}
\end{alignat}
\end{subequations}
In that form~\eqref{Eq_Crl_AELRP_s_RPERR_ss_D_001_002b} we have a relation between the coefficients of the Taylor-polynomials of $f(x+\xi\Delta x)$ and of $h(x+\xi\Delta x)$,
expressed in powers of $\xi$. In particular, using~\eqref{Eq_Crl_AELRP_s_RPERR_ss_D_001_002b}, we have
\begin{subequations}
                                                                                                       \label{Eq_Crl_AELRP_s_RPERR_ss_D_001_003}
\begin{alignat}{6}
h(x+\xi\Delta x)&=\sum_{m=0}^{N_\tsc{tj}}\frac{\xi^m\;\Delta x^m\;h^{(m)}(x)}
                                              {m!                           }+O(\Delta x^{N_\tsc{tj}+1})
                                                                                                       \notag\\
                &=\sum_{m=0}^{N_\tsc{tj}}\sum_{\ell=0}^{\lfloor\frac{N_\tsc{tj}-m}{2}\rfloor}\frac{\tau_{2\ell}\;\Delta x^{m+2\ell}\;f^{(m+2\ell)}(x)}
                                                                                                  {m!                                                }\xi^m+O(\Delta x^{N_\tsc{tj}+1})
                                                                                                       \label{Eq_Crl_AELRP_s_RPERR_ss_D_001_003a}\\
                &=\sum_{s=0}^{N_\tsc{tj}}\sum_{\ell=0}^{\lfloor\frac{s}{2}\rfloor}\frac{\tau_{2\ell}\;\Delta x^s\;f^{(s)}(x)}
                                                                                       {(s-2\ell)!                          }\xi^{s-2\ell}+O(\Delta x^{N_\tsc{tj}+1})
                                                                                                       \label{Eq_Crl_AELRP_s_RPERR_ss_D_001_003b}
\end{alignat}
\end{subequations}
where we used \eqref{Eq_AELRP_s_AppendixA_003} and \eqref{Eq_AELRP_s_AppendixA_002}, and the fact that $N_\tsc{tj}+1\leq2\lfloor\frac{N_\tsc{tj}}{2}\rfloor+2$.
This completes the proof.\qed
\end{proof}
%

This expression \eqref{Eq_Crl_AELRP_s_RPERR_ss_D_001_001} is useful
in computing the error of numerical approximations to $h(x)$~\prpref{Prp_AELRP_s_EPR_ss_AELPR_001}.
%
\begin{remark}[Existence and uniqueness]
\label{Rmk_AELRP_s_RPERR_ss_D_002}
From~\defrefnp{Def_AELRP_s_RPERR_ss_RP_001} it follows immediately (proof by contradiction) that every reconstruction pair $h=R_{(1;\Delta x)}(f)$, with $h(x)$ continuous, if it exists, is unique.
For every $h(x)$ analytic in $I$ with radius of convergence $r_{\tsc{c}_h}(x)$,
the series~\eqref{Eq_Lem_AELRP_s_RPERR_ss_D_001_001a} with $n=0$ converges, as $N_\tsc{tj}\longrightarrow\infty$, $\forall\Delta x\in\Big(0,2 r_{\tsc{c}_h}(x)\Big)$, so that (because of uniqueness), for every analytic function $h(x)$
there exists a unique function $f=R^{-1}_{(1;\Delta x)}(h)$.
Whether the converse is always true, is an open question. Assuming $f(x)$ analytic in $I$ with radius of convergence $r_{\tsc{c}_f}(x)$, does not automatically imply the convergence of \eqref{Eq_Lem_AELRP_s_RPERR_ss_D_001_001b}
with $n=0$ as $N_\tsc{tj}\longrightarrow\infty$, because $\lim_{n\rightarrow\infty}(\tau_{2n}(2n)!)=\infty$. The necessary conditions of existence require further study.
Nonetheless, since $\lim_{n\rightarrow\infty}\tau_{2n}=0$ \tabref{Tab_Lem_AELRP_s_RPERR_ss_D_001_001} and $\tau_{2n}\tau_{2n+2}<0$ $\forall n\in{\mathbb N}_0$ \tabref{Tab_Lem_AELRP_s_RPERR_ss_D_001_001},
the class of functions $f(x)$ for which ~\eqref{Eq_Lem_AELRP_s_RPERR_ss_D_001_001b} with $n=0$ is convergent as $N_\tsc{tj}\longrightarrow\infty$ is not empty.
It is easy to verify that most of the basic functions $f(x)$ have reconstruction pairs $h=R_{(1;\Delta x)}(f)$, as do all polynomials of finite degree (\S\ref{AELRP_s_RP_ss_PRP}).
Whenever any of the series~\eqref{Eq_Lem_AELRP_s_RPERR_ss_D_001_001} converges as $N_\tsc{tj}\longrightarrow\infty$, the upper limit of the sums can be readily replaced by $\infty$, to yield complete
converging expansions (power-series).
The Godunov approach~\cite{Toro_1997a} to hyperbolic conservation laws $\partial_t u+\partial_x F=0$ \eqref{Eq_AELRP_s_I_001},
is based on space-time averaging of the \tsc{pde} \eqref{Eq_AELRP_s_I_001},
to obtain the corresponding \tsc{pde}, $\partial_t\bar u+\partial_x\bar F=0$ \eqref{Eq_AELRP_s_I_005},
for the cell-averages $\bar u$ \eqref{Eq_AELRP_s_I_003}.
Therefore, with respect to the notation used in \defrefnp{Def_AELRP_s_RPERR_ss_RP_001},
$\bar u$ corresponds to $f$ and $u$ corresponds to $h$. In the context of reconstruction procedures~\cite{Harten_Engquist_Osher_Chakravarthy_1987a,
                                                                                                          Liu_Osher_Chan_1994a,
                                                                                                          Jiang_Shu_1996a,
                                                                                                          Shu_1998a,
                                                                                                          Shu_2009a}
for the discretization of hyperbolic conservation laws, the existence of the solution (integrable function) $u$ ({\em ie} $h$) is assumed,
so that the existence of the sliding-averages $\bar u$ ({\em ie} $f$) follows~\rmkref{Rmk_AELRP_s_RPERR_ss_D_002}.
Hence, the results obtained in~\S\ref{AELRP_s_RPERR} (where the existence of $h$ is assumed) are directly applicable to
the Godunov approach for the numerical computation of hyperbolic conservation laws.\qed
\end{remark}
%

%
%
%
%
%
\subsection{Generating function of $\tau_n$ and the reconstruction pair of $\exp(x):={\rm e}^x$}\label{AELRP_s_RPERR_ss_GFtaunRPexp}
%
%
%
%
%

As mentioned above \rmkref{Rmk_AELRP_s_RPERR_ss_D_002} most of the basic functions have reconstruction pairs. The reconstruction pair of the exponential function plays an important role
in the reconstruction relations \lemref{Lem_AELRP_s_RPERR_ss_D_001}, because it defines the generating function of the numbers $\tau_n$ \tabref{Tab_Lem_AELRP_s_RPERR_ss_D_001_001}.

%
\begin{theorem}[$R_{(1;\Delta x)}({\exp})$]
\label{Thm_AELRP_s_RPERR_ss_GFtaunRPexp_001}
The reconstruction pair of $\exp(x):={\rm e}^x$ is
\begin{subequations}
                                                                                                       \label{Eq_Thm_AELRP_s_RPERR_ss_GFtaunRPexp_001_001}
\begin{alignat}{6}
[R_{(1;\Delta x)}({\exp})](x)=\frac{\tfrac{1}{2}\Delta x}
                  {\sinh{\tfrac{1}{2}\Delta x}}{\rm e}^x=g_\tau(\Delta x){\rm e}^x
                                                                                                       \label{Eq_Thm_AELRP_s_RPERR_ss_GFtaunRPexp_001_001a}
\end{alignat}
where the function
\begin{alignat}{6}
g_\tau(x):=\frac{\tfrac{1}{2} x      }
                {\sinh{\tfrac{1}{2}x}}
                                                                                                       \label{Eq_Thm_AELRP_s_RPERR_ss_GFtaunRPexp_001_001b}
\end{alignat}
is the generating function of the numbers $\tau_n$ \tabref{Tab_Lem_AELRP_s_RPERR_ss_D_001_001} satisfying~\eqref{Eq_Lem_AELRP_s_RPERR_ss_D_001_001c}
\begin{alignat}{6}
\tau_{n}:=\frac{1}{n!}g_\tau^{(n)}(0)\qquad\forall n\in{\mathbb N}_0
                                                                                                       \label{Eq_Thm_AELRP_s_RPERR_ss_GFtaunRPexp_001_001c}
\end{alignat}
Furthermore
\begin{alignat}{6}
\tau_{2n+1}:=\frac{1}{(2n+1)!}g_\tau^{(2n+1)}(0) = 0\qquad\forall n\in{\mathbb N}_0
                                                                                                       \label{Eq_Thm_AELRP_s_RPERR_ss_GFtaunRPexp_001_001d}
\end{alignat}
\end{subequations}
\end{theorem}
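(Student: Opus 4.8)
The plan is to obtain~\eqref{Eq_Thm_AELRP_s_RPERR_ss_GFtaunRPexp_001_001a} by an educated guess checked directly against \defrefnp{Def_AELRP_s_RPERR_ss_RP_001}, and then to read off the generating‑function statements~\eqref{Eq_Thm_AELRP_s_RPERR_ss_GFtaunRPexp_001_001c} and~\eqref{Eq_Thm_AELRP_s_RPERR_ss_GFtaunRPexp_001_001d} from the recurrence~\eqref{Eq_Lem_AELRP_s_RPERR_ss_D_001_001c} already established in \lemrefnp{Lem_AELRP_s_RPERR_ss_D_001}. First I would try $h(x)=C\,{\rm e}^x$ with $C$ independent of $x$ and compute the sliding average
\[
\frac{1}{\Delta x}\int_{x-\frac12\Delta x}^{x+\frac12\Delta x}\! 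C\,{\rm e}^\zeta\,d\zeta
 = \frac{C\,{\rm e}^x}{\Delta x}\bigl({\rm e}^{\frac12\Delta x}-{\rm e}^{-\frac12\Delta x}\bigr)
 = C\,\frac{2\sinh\tfrac12\Delta x}{\Delta x}\,{\rm e}^x .
\]
This equals ${\rm e}^x$ for all $x$ precisely when $C=\tfrac12\Delta x/\sinh\tfrac12\Delta x=g_\tau(\Delta x)$, which is~\eqref{Eq_Thm_AELRP_s_RPERR_ss_GFtaunRPexp_001_001a}; since $h(x)=g_\tau(\Delta x)\,{\rm e}^x$ is continuous, the uniqueness statement of \rmkrefnp{Rmk_AELRP_s_RPERR_ss_D_002} guarantees it is \emph{the} reconstruction pair of ${\rm e}^x$.

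Next I would record that $g_\tau(x)=\tfrac12 x/\sinh\tfrac12 x$ is analytic on the disc $|x|<2\pi$: the simple zero of $\sinh\tfrac12 x$ at $x=0$ is cancelled by the numerator (the removable value being $g_\tau(0)=1$), and the nearest zeros of $\sinh\tfrac12 x$ sit at $x=\pm2\pi{\rm i}$. Hence $g_\tau$ admits a Taylor expansion $g_\tau(x)=\sum_{n\ge0}\tau_n x^n$ on that disc, with coefficients $\tau_n=g_\tau^{(n)}(0)/n!$; this is the \emph{definition} in~\eqref{Eq_Thm_AELRP_s_RPERR_ss_GFtaunRPexp_001_001c}, and the remaining task is to identify these coefficients with the numbers of \lemrefnp{Lem_AELRP_s_RPERR_ss_D_001}.

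The crucial observation is that the reciprocal series is elementary: $1/g_\tau(x)=\sinh(\tfrac12 x)/(\tfrac12 x)=\sum_{j\ge0}x^{2j}/\bigl(2^{2j}(2j+1)!\bigr)$, which is merely the Maclaurin series of $\sinh$. Multiplying the two power series, both convergent on $|x|<2\pi$, and comparing the coefficient of $x^{2k}$ in the identity $g_\tau(x)\bigl(1/g_\tau(x)\bigr)=1$ yields
\[
\sum_{s=0}^{k}\frac{\tau_{2s}}{2^{2k-2s}\,(2k-2s+1)!}=\delta_{k0}\qquad\forall k\in{\mathbb N}_0 ,
\]
which is exactly~\eqref{Eq_Lem_AELRP_s_RPERR_ss_D_001_006}, i.e.\ the recurrence~\eqref{Eq_Lem_AELRP_s_RPERR_ss_D_001_001c} after solving the $k$-th equation for $\tau_{2k}$; together with $\tau_0=g_\tau(0)=1$ this determines the even-index coefficients uniquely and so identifies them with the $\tau_{2\ell}$ of \lemrefnp{Lem_AELRP_s_RPERR_ss_D_001}. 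For the odd indices, comparing coefficients of odd powers of $x$ gives $\tau_{2n+1}=0$ inductively; more simply, $g_\tau(-x)=(-\tfrac12 x)/\sinh(-\tfrac12 x)=g_\tau(x)$, so $g_\tau$ is even and all its odd-order derivatives vanish at $0$, which is~\eqref{Eq_Thm_AELRP_s_RPERR_ss_GFtaunRPexp_001_001d}.

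I do not expect a substantial obstacle here; the only point deserving care is the passage in the last two steps from a formal manipulation of series to a genuine identity of Taylor coefficients, which is legitimate because $g_\tau$ and $1/g_\tau$ are both holomorphic, and the latter nonvanishing, on $|x|<2\pi$, so the Cauchy product and the comparison of coefficients are valid there.
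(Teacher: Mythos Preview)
Your proof is correct. The verification of~\eqref{Eq_Thm_AELRP_s_RPERR_ss_GFtaunRPexp_001_001a} is essentially identical to the paper's: both plug the ansatz $h(x)=C\,{\rm e}^x$ into the sliding-average definition and solve for $C$. The paper arrives at that ansatz by first applying the deconvolution formula~\eqref{Eq_Lem_AELRP_s_RPERR_ss_D_001_001b} to ${\rm e}^x$, which motivates the form $g_\tau(\Delta x)\,{\rm e}^x$, whereas you simply guess it; the computation is the same.

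Where you differ is in the direction of the generating-function argument. The paper starts from the numbers $\tau_{2n}$ already defined by the recurrence in \lemrefnp{Lem_AELRP_s_RPERR_ss_D_001}, forms the power series $\sum_{n\ge0}\tau_{2n}x^{2n}$, argues its convergence (by appealing to the alternating-sign and decay behaviour observed in Table~\ref{Tab_Lem_AELRP_s_RPERR_ss_D_001_001}), and then identifies its sum with the closed form~\eqref{Eq_Thm_AELRP_s_RPERR_ss_GFtaunRPexp_001_001b}. You go the opposite way: take the closed form, note its analyticity on $|x|<2\pi$, define $\tau_n$ as its Taylor coefficients, and verify the recurrence~\eqref{Eq_Lem_AELRP_s_RPERR_ss_D_001_001c} by the Cauchy product $g_\tau\cdot(1/g_\tau)=1$ with the explicit series for $1/g_\tau=\sinh(\tfrac12 x)/(\tfrac12 x)$. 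Your route is arguably cleaner, since it replaces the paper's somewhat informal convergence argument by the standard fact that an analytic function has a convergent Taylor series, and the recurrence then drops out mechanically. The evenness argument for~\eqref{Eq_Thm_AELRP_s_RPERR_ss_GFtaunRPexp_001_001d} is the same in both.
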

%
%
\begin{proof}
From \eqref{Eq_Lem_AELRP_s_RPERR_ss_D_001_001b}, since ${\rm e}^x$ is of class $C^\infty$, we have $\forall N_\tsc{tj}\in{\mathbb N}$
\begin{alignat}{6}
[R_{(1;\Delta x)}({\rm exp})](x) = \sum_{n=0}^{N_\tsc{tj}}\tau_{2n}\Delta x^{2n}\frac{d^{2n}}{dx^{2n}}{\rm e}^x+O(\Delta x^{2N_\tsc{tj}+2})
                                 = \left(\sum_{n=0}^{N_\tsc{tj}}\tau_{2n}\Delta x^{2n}\right){\rm e}^x+O(\Delta x^{2N_\tsc{tj}+2})
                                                                                                       \label{Eq_Thm_AELRP_s_RPERR_ss_GFtaunRPexp_001_002}
\end{alignat}
Since $\lim_{n\rightarrow\infty}\tau_{2n}=0$ and $\tau_{2n}\tau_{2n+2}<0\;\forall n\in{\mathbb N}_0$,
the alternating ($\Delta x^{2n}>0\;\forall n\in{\mathbb N}_0$) series in~\eqref{Eq_Thm_AELRP_s_RPERR_ss_GFtaunRPexp_001_002} converges as $N_\tsc{tj}\longrightarrow\infty$,
at least $\forall\Delta x\in(0,1)$. Defining the function $g_\tau(x)$
\begin{equation}
g_\tau(x):=\sum_{n=0}^{\infty}\tau_{2n} x^{2n}
                                                                                                       \label{Eq_Thm_AELRP_s_RPERR_ss_GFtaunRPexp_001_003}
\end{equation}
suggests that $\exists\;g_\tau:{\mathbb R}\longrightarrow{\mathbb R}$ such that
\begin{equation}
[R_{(1;\Delta x)}({\rm exp})](x)=g_\tau(\Delta x) {\rm e}^x
                                                                                                       \label{Eq_Thm_AELRP_s_RPERR_ss_GFtaunRPexp_001_004}
\end{equation}
Using~\eqref{Eq_Thm_AELRP_s_RPERR_ss_GFtaunRPexp_001_004} in~\eqref{Eq_Def_AELRP_s_RPERR_ss_RP_001_001a}
\begin{equation}
{\rm e}^x=\dfrac{1}{\Delta x}\int_{x-\frac{1}{2}\Delta x}^{x+\frac{1}{2}\Delta x}{g_\tau(\Delta x){\rm e}^\zeta d\zeta}
         =\dfrac{1}{\Delta x}g_\tau(\Delta x)\left({\rm e}^{x+\frac{1}{2}\Delta x}-{\rm e}^{x-\frac{1}{2}\Delta x}\right)
                                                                                                       \label{Eq_Thm_AELRP_s_RPERR_ss_GFtaunRPexp_001_005}
\end{equation}
gives
\begin{equation}
g_\tau(\Delta x)=\frac{\Delta x}{{\rm e}^{\frac{1}{2}\Delta x}-{\rm e}^{-\frac{1}{2}\Delta x}}
                =\frac{\tfrac{1}{2}\Delta x}
                      {\sinh{\tfrac{1}{2}\Delta x}}
                                                                                                       \label{Eq_Thm_AELRP_s_RPERR_ss_GFtaunRPexp_001_006}
\end{equation}
proving~\eqref{Eq_Thm_AELRP_s_RPERR_ss_GFtaunRPexp_001_001a}.
It is a simple exercise to show that the function $g_\tau(x)$~\eqref{Eq_Thm_AELRP_s_RPERR_ss_GFtaunRPexp_001_001b} is continuous at $x=0$, and has
continuous derivatives of arbitrary order at $x=0$, satisfying
\begin{subequations}
                                                                                                       \label{Eq_Thm_AELRP_s_RPERR_ss_GFtaunRPexp_001_007}
\begin{alignat}{6}
g_\tau(0)=&1
                                                                                                       \label{Eq_Thm_AELRP_s_RPERR_ss_GFtaunRPexp_001_007a}\\
g_\tau^{(2n+1)}(0)=& 0\qquad\forall n\in{\mathbb N}_0
                                                                                                       \label{Eq_Thm_AELRP_s_RPERR_ss_GFtaunRPexp_001_007b}
\end{alignat}
\end{subequations}
Comparing the Taylor-series of $g_\tau(x)$~\eqref{Eq_Thm_AELRP_s_RPERR_ss_GFtaunRPexp_001_001b} with the series definition of $g_\tau(x)$~\eqref{Eq_Thm_AELRP_s_RPERR_ss_GFtaunRPexp_001_003},
and taking into account~\eqref{Eq_Thm_AELRP_s_RPERR_ss_GFtaunRPexp_001_007} proves~\eqref{Eq_Thm_AELRP_s_RPERR_ss_GFtaunRPexp_001_001c}.
\eqref{Eq_Thm_AELRP_s_RPERR_ss_GFtaunRPexp_001_007b} yields~\eqref{Eq_Thm_AELRP_s_RPERR_ss_GFtaunRPexp_001_001d}.\qed
\end{proof}
%

%
%
%
%
%
%
%
%
%
\section{Reconstruction of polynomials}\label{AELRP_s_RP}
%
%
%
%
%
%
%
%
%

Reconstruction of polynomials~\defref{Def_AELRP_s_RPERR_ss_RP_001} is the basis of \tsc{eno}~\cite{Harten_Osher_1987a,
                                                                                                  Harten_Engquist_Osher_Chakravarthy_1987a}
and \tsc{weno}~\cite{Liu_Osher_Chan_1994a,
                     Jiang_Shu_1996a,
                     Balsara_Shu_2000a,
                     Henrick_Aslam_Powers_2005a,
                     Borges_Carmona_Costa_Don_2008a,
                     Gerolymos_Senechal_Vallet_2009a}
reconstructions.
We investigate in detail the coefficients of polynomial (\S\ref{AELRP_s_RP_ss_PRP}) reconstruction pairs~\defref{Def_AELRP_s_RPERR_ss_RP_001}.

%
%
%
%
%
\subsection{Polynomial reconstruction pair}\label{AELRP_s_RP_ss_PRP}
%
%
%
%
%

In this section we consider the case where either $f(x)$ or $h(x)$ in~\defrefnp{Def_AELRP_s_RPERR_ss_RP_001}
is a polynomial.
%
\begin{lemma}[{\rm Polynomial reconstruction pair}]
\label{Lem_AELRP_s_RP_ss_PRP_001}
Let $p_h(x,x_i,\Delta x)$ be a polynomial of degree $M$
\begin{subequations}
                                                                                                       \label{Eq_Lem_AELRP_s_RP_ss_PRP_001_001}
\begin{alignat}{6}
p_h(x;x_i,\Delta x)&:=&\sum_{m=0}^{M} c_{h_m}\left(\frac{x-x_i}{\Delta x}\right)^m
                                                                                                       \label{Eq_Lem_AELRP_s_RP_ss_PRP_001_001a}
\end{alignat}
Then $p_f(x;x_i,\Delta x)$ defined by~\defref{Def_AELRP_s_RPERR_ss_RP_001}
\begin{alignat}{6}
p_f(x;x_i,\Delta x)&:=&\dfrac{1}{\Delta x}\int_{x-\frac{1}{2}\Delta x}^{x+\frac{1}{2}\Delta x}p_h(\zeta;x_i,\Delta x)d\zeta
                                                                                                       \label{Eq_Lem_AELRP_s_RP_ss_PRP_001_001b}
\end{alignat}
is a polynomial also of degree $M$, with coefficients $c_{f_m}$ which can be
computed from the coefficients $c_{h_m}$ of $p_h(x;x_i,\Delta x)$
\begin{alignat}{6}
p_f(x;x_i,\Delta x)=&\sum_{m=0}^{M} c_{f_m}\left(\frac{x-x_i}{\Delta x}\right)^m
                                                                                                       \label{Eq_Lem_AELRP_s_RP_ss_PRP_001_001c}\\
c_{f_m}            =&\sum_{k=0}^{\lfloor\frac{M-m}{2}\rfloor}\frac{c_{h_{m+2k}}}
                                                                  {2^{2k}\;(2k+1)}\binom{m+2k}
                                                                                        {2k  }  &\qquad\forall m\in\{0,\cdots,M\}
                                                                                                       \label{Eq_Lem_AELRP_s_RP_ss_PRP_001_001d}\\
m!\;c_{f_m}        =&\sum_{k=0}^{\lfloor\frac{M-m}{2}\rfloor}\frac{(m+2k)!        }
                                                                  {2^{2k}\;(2k+1)!}c_{h_{m+2k}} &\qquad\forall m\in\{0,\cdots,M\}
                                                                                                       \label{Eq_Lem_AELRP_s_RP_ss_PRP_001_001e}
\end{alignat}
Inversely, the coefficients $c_{h_m}$ of $p_h(x;x_i,\Delta x)$ can be computed from the coefficients $c_{f_m}$ of $p_f(x;x_i,\Delta x)$
\begin{alignat}{6}
c_{h_m}          =\frac{1}{m!}\sum_{k=0}^{\lfloor\frac{M-m}{2}\rfloor}\tau_{2k}\;c_{f_{m+2k}}\;(m+2k)!  \qquad\forall m\in\{0,\cdots,M\}
                                                                                                       \label{Eq_Lem_AELRP_s_RP_ss_PRP_001_001f}
\end{alignat}
where the numbers $\tau_{2k}$ \tabref{Tab_Lem_AELRP_s_RPERR_ss_D_001_001} are defined by~\eqref{Eq_Thm_AELRP_s_RPERR_ss_GFtaunRPexp_001_001c} and satisfy the recurrence~\eqref{Eq_Lem_AELRP_s_RPERR_ss_D_001_001c}.
\end{subequations}
\end{lemma}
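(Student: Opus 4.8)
The plan is to reduce everything to a straightforward computation with monomials and then invoke the deconvolution identities already established in \lemrefnp{Lem_AELRP_s_RPERR_ss_D_001}. First I would compute the sliding average of a single monomial: substituting $\zeta = x_i + t\,\Delta x$ and $x = x_i + \xi\,\Delta x$ in~\eqref{Eq_Lem_AELRP_s_RP_ss_PRP_001_001b}, the average of $\left(\tfrac{\zeta-x_i}{\Delta x}\right)^m$ over $[x-\tfrac12\Delta x, x+\tfrac12\Delta x]$ becomes $\int_{\xi-1/2}^{\xi+1/2} t^m\,dt = \tfrac{1}{m+1}\bigl[(\xi+\tfrac12)^{m+1}-(\xi-\tfrac12)^{m+1}\bigr]$. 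Expanding by the binomial theorem, the odd powers of $\tfrac12$ cancel and one is left with $\sum_{j\ge 0,\ j\le m,\ m-j\text{ even}} \tfrac{1}{m+1}\binom{m+1}{j}\tfrac{1}{2^{m-j}}\xi^{j}$; reindexing $m-j=2k$ gives precisely a polynomial of degree $m$ in $\xi$ whose coefficient of $\xi^m$ is $1$, confirming $\deg p_f = \deg p_h = M$. Summing over $m$ with the weights $c_{h_m}$ and collecting the coefficient of $\left(\tfrac{x-x_i}{\Delta x}\right)^m$ yields~\eqref{Eq_Lem_AELRP_s_RP_ss_PRP_001_001d}, after noting $\tfrac{1}{m+2k+1}\binom{m+2k+1}{m} = \tfrac{1}{2k+1}\binom{m+2k}{2k}$. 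Then~\eqref{Eq_Lem_AELRP_s_RP_ss_PRP_001_001e} is just the rewriting $\binom{m+2k}{2k} = \tfrac{(m+2k)!}{m!\,(2k)!}$.

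For the inversion~\eqref{Eq_Lem_AELRP_s_RP_ss_PRP_001_001f} I would not re-derive anything from scratch but instead observe that a polynomial is its own Taylor polynomial: applying~\eqref{Eq_Lem_AELRP_s_RPERR_ss_D_001_001b} of \lemrefnp{Lem_AELRP_s_RPERR_ss_D_001} with $f = p_f$, $h = p_h$, and $N_\tsc{tj}$ large enough that $2\lfloor N_\tsc{tj}/2\rfloor \ge M$ makes the remainder vanish identically (all derivatives of order $>M$ are zero). Evaluating at $x = x_i$ and using $p_h^{(n)}(x_i) = \tfrac{n!}{\Delta x^n}c_{h_n}$ and likewise for $p_f$, the relation $h^{(m)}(x_i) = \sum_{\ell} \tau_{2\ell}\Delta x^{2\ell} f^{(m+2\ell)}(x_i)$ becomes $\tfrac{m!}{\Delta x^m}c_{h_m} = \sum_{\ell=0}^{\lfloor (M-m)/2\rfloor} \tau_{2\ell}\Delta x^{2\ell}\cdot\tfrac{(m+2\ell)!}{\Delta x^{m+2\ell}}c_{f_{m+2\ell}}$; the powers of $\Delta x$ cancel, giving exactly~\eqref{Eq_Lem_AELRP_s_RP_ss_PRP_001_001f}. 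The upper limit of the sum is $\lfloor(M-m)/2\rfloor$ because $c_{f_{m+2\ell}} = 0$ once $m+2\ell > M$, which is consistent with $p_h$ having degree $M$.

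The only points requiring a little care — and the closest thing to an obstacle — are bookkeeping ones: verifying that the substitution/reindexing in the monomial average produces exactly the binomial coefficient $\binom{m+2k}{2k}$ with the factor $\tfrac{1}{2^{2k}(2k+1)}$ and no stray signs, and confirming that the truncated series of \lemrefnp{Lem_AELRP_s_RPERR_ss_D_001} is genuinely exact (not merely asymptotic) in the polynomial case so that evaluation at $x_i$ is legitimate. Both are routine once one writes $1-(-1)^{m-j}$ to kill the odd terms, exactly as in the proof of \lemrefnp{Lem_AELRP_s_RPERR_ss_D_001}. Finally I would remark that~\eqref{Eq_Lem_AELRP_s_RP_ss_PRP_001_001d}--\eqref{Eq_Lem_AELRP_s_RP_ss_PRP_001_001e} and~\eqref{Eq_Lem_AELRP_s_RP_ss_PRP_001_001f} are mutually inverse precisely by the defining recurrence~\eqref{Eq_Lem_AELRP_s_RPERR_ss_D_001_001c}, equivalently by the Cauchy-product identity $\sum_{s=0}^{k}\tfrac{\tau_{2s}}{2^{2k-2s}(2k-2s+1)!} = \delta_{k0}$ established in~\eqref{Eq_Lem_AELRP_s_RPERR_ss_D_001_006}, so no independent consistency check is needed.
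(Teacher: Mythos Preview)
Your proposal is correct and follows essentially the same route as the paper: the forward direction is the paper's computation verbatim (direct integration of monomials, binomial expansion, the parity trick $1-(-1)^{m-j}$, and the identity $\tfrac{1}{m+2k+1}\binom{m+2k+1}{m}=\tfrac{1}{2k+1}\binom{m+2k}{2k}$), and for the inverse the paper likewise reduces to \lemrefnp{Lem_AELRP_s_RPERR_ss_D_001}. The only cosmetic difference is that for~\eqref{Eq_Lem_AELRP_s_RP_ss_PRP_001_001f} the paper, after noting the structural identity with~\eqref{Eq_Lem_AELRP_s_RPERR_ss_D_001_001b}, verifies the inversion by substituting~\eqref{Eq_Lem_AELRP_s_RP_ss_PRP_001_001e} into the right-hand side of~\eqref{Eq_Lem_AELRP_s_RP_ss_PRP_001_001f} and invoking~\eqref{Eq_Lem_AELRP_s_RPERR_ss_D_001_006} directly, whereas you apply~\eqref{Eq_Lem_AELRP_s_RPERR_ss_D_001_001b} as a black box to the polynomial pair and evaluate at $x_i$; both arguments rest on the same recurrence and are equally valid.
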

%
%
\begin{proof}
Computing the integral in \eqref{Eq_Lem_AELRP_s_RP_ss_PRP_001_001b} gives 
\begin{alignat}{6}
p_f(x;x_i,\Delta x)= \int_{\frac{x}{\Delta x}-\frac{1}{2}}^{\frac{x}{\Delta x}+\frac{1}{2}}\left(\sum_{m=0}^{M} c_{h_m}\left(\zeta-\frac{x_i}{\Delta x}\right)^m\right) d\zeta
                   =&\sum_{m=0}^{M}\frac{c_{h_m}}{m+1}\left(\frac{x-x_i}{\Delta x}+\tfrac{1}{2}\right)^{m+1}
                   - \sum_{m=0}^{M}\frac{c_{h_m}}{m+1}\left(\frac{x-x_i}{\Delta x}-\tfrac{1}{2}\right)^{m+1}
                                                                                                       \notag\\
                   =&\sum_{m=0}^{M}\frac{c_{h_m}}{m+1}\left(\sum_{n=0}^{m+1}\binom{m+1}{n}\left(\frac{x-x_i}{\Delta x}\right)^n\frac{1}{2^{m-n}}\;\frac{1-(-1)^{m+1-n}}{2}\right)
                                                                                                       \notag\\
                   =&\sum_{m=0}^{M}\frac{c_{h_m}}{m+1}\left(\sum_{n=0}^{m}  \binom{m+1}{n}\left(\frac{x-x_i}{\Delta x}\right)^n\frac{1}{2^{m-n}}\;\frac{1-(-1)^{m+1-n}}{2}\right)
                                                                                                       \label{Eq_Lem_AELRP_s_RP_ss_PRP_001_002}
\end{alignat}
where in the last line of \eqref{Eq_Lem_AELRP_s_RP_ss_PRP_001_002} $\sum_{n=0}^{m+1}$ was changed to $\sum_{n=0}^{m}$ because $n=m+1\Longrightarrow1-(-1)^{m+1-n}=1-(-1)^0=0$.
This proves that both $p_h(x;x_i,\Delta x)$ and $p_f(x;x_i,\Delta x)$ are of degree $M$. Since
\begin{subequations}
                                                                                                       \label{Eq_Lem_AELRP_s_RP_ss_PRP_001_003}
\begin{alignat}{6}
m+1-n=2k+1         &\quad k\in{\mathbb N}_0&\;\Longrightarrow\;& 1-(-1)^{m+1-n}=2
                                                                                                       \label{Eq_Lem_AELRP_s_RP_ss_PRP_001_003a}\\
m+1-n=2k           &\quad k\in{\mathbb N}_0&\;\Longrightarrow\;& 1-(-1)^{m+1-n}=0
                                                                                                       \label{Eq_Lem_AELRP_s_RP_ss_PRP_001_003b}\\
0\leq n=m-2k\leq m &\quad k\in{\mathbb N}_0&\;\Longrightarrow\;& 0\leq2k\leq m    &\;\Longleftrightarrow\;&0\leq k\leq\lfloor\frac{m}{2}\rfloor
                                                                                                       \label{Eq_Lem_AELRP_s_RP_ss_PRP_001_003c}
\end{alignat}
\end{subequations}
upon substituting $2k:=m-n$, \eqref{Eq_Lem_AELRP_s_RP_ss_PRP_001_002} becomes
\begin{alignat}{6}
p_f(x;x_i,\Delta x)= \sum_{m=0}^{M}\frac{c_{h_m}}{m+1}\left(\sum_{k=0}^{\lfloor\frac{m}{2}\rfloor}\frac{1}{2^{2k}}\binom{m+1}{m-2k}\left(\frac{x-x_i}{\Delta x}\right)^{m-2k}\right)
                   = \sum_{m=0}^{M}\sum_{k=0}^{\lfloor\frac{m}{2}\rfloor}
                                                                        \frac{c_{h_m}}{2^{2k}\;(m+1)}\binom{m+1}{m-2k}\left(\frac{x-x_i}{\Delta x}\right)^{m-2k}
                                                                                                       \label{Eq_Lem_AELRP_s_RP_ss_PRP_001_004}
\end{alignat}
and, using \eqref{Eq_AELRP_s_AppendixA_003}, \eqref{Eq_Lem_AELRP_s_RP_ss_PRP_001_004} reads
\begin{equation}
p_f(x;x_i,\Delta x)=
\sum_{\ell=0}^{M}\left(\sum_{k=0}^{\lfloor\frac{M-\ell}{2}\rfloor}\frac{c_{h_{\ell+2k}}    }
                                                                       {2^{2k}\;(\ell+2k+1)}\binom{\ell+2k+1}
                                                                                                  {\ell     }\right)\left(\frac{x-x_i}{\Delta x}\right)^\ell
                                                                                                       \label{Eq_Lem_AELRP_s_RP_ss_PRP_001_005}
\end{equation}
Using the identity \eqref{Eq_AELRP_s_AppendixA_004} and changing the summation index $\ell$ to $m$ gives
\begin{equation}
p_f(x;x_i,\Delta x)=
\sum_{m=0}^{M}\left(\sum_{k=0}^{\lfloor\frac{M-m}{2}\rfloor}\frac{c_{h_{m+2k}}  }
                                                                 {2^{2k}\;(2k+1)}\binom{m+2k}
                                                                                       {2k  }\right)\left(\frac{x-x_i}{\Delta x}\right)^m
                                                                                                       \label{Eq_Lem_AELRP_s_RP_ss_PRP_001_006}
\end{equation}
which proves~\eqref{Eq_Lem_AELRP_s_RP_ss_PRP_001_001d}. In practice, the coefficients $c_{f_m}$ are computed by solving a Vandermonde system~\cite{Macon_Spitzbart_1958a},
and the linear system~\eqref{Eq_Lem_AELRP_s_RP_ss_PRP_001_001d} must be solved to compute the coefficients $c_{h_m}$~\cite{Harten_Engquist_Osher_Chakravarthy_1987a}.
The general solution can be obtained using backward substitution without making reference to the basic reconstruction relations
(\S\ref{AELRP_s_RPERR}). This alternative, matrix-algebra-oriented, proof of~\lemrefnp{Lem_AELRP_s_RP_ss_PRP_001} is given in \S\ref{AELRP_s_RP_ss_MIPPRPLem}.

However, the solution can be obtained immediately, by observing \eqref{Eq_Lem_AELRP_s_RP_ss_PRP_001_001e} that the relation between $c_{f_m} m!$ and $c_{h_{m+2k}} (m+2k)!$ in~\eqref{Eq_Lem_AELRP_s_RP_ss_PRP_001_001d}
is identical to the relation between $f^{(m)}(x)$ and $\Delta x^{2k}h^{(m+2k)}(x)$ in~\eqref{Eq_Lem_AELRP_s_RPERR_ss_D_001_001a},
with the only difference that the upper limit of the sum is finite. The inverse relation is exactly analogous to~\eqref{Eq_Lem_AELRP_s_RPERR_ss_D_001_001b},
because, using~\eqref{Eq_Lem_AELRP_s_RP_ss_PRP_001_001e} in the right-hand-side of~\eqref{Eq_Lem_AELRP_s_RP_ss_PRP_001_001f}
\begin{alignat}{6}
 &\frac{1}{m!}\sum_{s=0}^{\lfloor\frac{M-m}{2}\rfloor}\tau_{2s}\;c_{f_{m+2s}}\;(m+2s)!= 
  \frac{1}{m!}\sum_{s=0}^{\lfloor\frac{M-m}{2}\rfloor}\tau_{2s}\left(\sum_{\ell=0}^{\lfloor\frac{M-m}{2}\rfloor-s}\frac{(m+2s+2\ell)!\;c_{h_{m+2s+2\ell}}}
                                                                                                                      {2^{2\ell}\;(2\ell+1)!            }\right)
= \sum_{s=0}^{\lfloor\frac{M-m}{2}\rfloor}\sum_{\ell=0}^{\lfloor\frac{M-m}{2}\rfloor-s}\left(\tau_{2s}\frac{(m+2s+2\ell)!\;c_{h_{m+2s+2\ell}}}
                                                                                                           {2^{2\ell}\;(2\ell+1)!\;m!        }\right)
                                                                                                       \notag\\
=&\sum_{k=0}^{\lfloor\frac{M-m}{2}\rfloor}\left(\sum_{s=0}^{k}\frac{\tau_{2s}            }
                                                                   {2^{2k-2s}\;(2k-2s+1)!}\right)\frac{(m+2k)!\;c_{h_{m+2k}}}
                                                                                                     {m!                   }
= \sum_{k=0}^{\lfloor\frac{M-m}{2}\rfloor}\delta_{k0} \frac{(m+2k)!\;c_{h_{m+2k}}}
                                                           {m!                   }=c_{h_m}
                                                                                                       \label{Eq_Lem_AELRP_s_RP_ss_PRP_001_007}
\end{alignat}
where we used~\eqref{Eq_Lem_AELRP_s_RPERR_ss_D_001_006}, and~\eqref{Eq_AELRP_s_AppendixA_003} and \eqref{Eq_AELRP_s_AppendixA_002}.
This completes the proof.\qed
\end{proof}
%

The extension of the above results~\lemref{Lem_AELRP_s_RP_ss_PRP_001} to infinite power-series (assuming that they are convergent) is straightforward.

%
%
%
%
%
\subsection{Matrix inversion proof of~\lemrefnp{Lem_AELRP_s_RP_ss_PRP_001}}\label{AELRP_s_RP_ss_MIPPRPLem}
%
%
%
%
%

In this section we summarize the matrix inversion relations which can be used for an alternative, matrix-algebra-oriented, proof \lemref{Lem_AELRP_s_RP_ss_MIPPRPLem_003} of \lemrefnp{Lem_AELRP_s_RP_ss_PRP_001}.
By \eqref{Eq_Lem_AELRP_s_RP_ss_PRP_001_001d} the coefficients $c_{f_n}$ of $p_f$ are expressed as linear combinations of the coefficients $c_{h_n}$ of $p_h$.
This system \eqref{Eq_Lem_AELRP_s_RP_ss_PRP_001_001d}, whose solution expresses $c_{h_n}$ as linear combinations of $c_{f_n}$
is the deconvolution linear system~\cite[(3.13b), p. 244]{Harten_Engquist_Osher_Chakravarthy_1987a}.\footnote{\label{ff_AELRP_s_RP_ss_MIPPRPLem_001}more precisely,
                                                                                                                                                   the system in ~\cite[(3.13b), p. 244]{Harten_Engquist_Osher_Chakravarthy_1987a} relates
                                                                                                                                                   $n!c_{f_n}$ with $m!c_{h_m}$.
                                                                                                             }
Since the summation relations~\eqref{Eq_Lem_AELRP_s_RP_ss_PRP_001_001d} involve
increments with step 2, we can split~\eqref{Eq_Lem_AELRP_s_RP_ss_PRP_001_001d} into 2 independent linear systems
\begin{subequations}
                                                                                                       \label{Eq_AELRP_s_RP_ss_MIPPRPLem_001}
\begin{alignat}{6}
&c_{f_{M-2\ell}}   &=&\sum_{k=0}^{\ell}\frac{c_{h_{M-2\ell+2k}}}
                                            {(2k+1)\;2^{2k}    }\binom{M-2\ell+2k}
                                                                      {2k        }   &\quad \ell=&0,\cdots,\lfloor\frac{M}{2}\rfloor
                                                                                                       \label{Eq_AELRP_s_RP_ss_MIPPRPLem_001a}\\
&c_{f_{M-1-2\ell}} &=&\sum_{k=0}^{\ell}\frac{c_{h_{M-1-2\ell+2k}}}
                                            {(2k+1)\;2^{2k}      }\binom{M-1-2\ell+2k}
                                                                        {2k          } &\quad \ell=&0,\cdots,\lfloor\frac{M-1}{2}\rfloor
                                                                                                       \label{Eq_AELRP_s_RP_ss_MIPPRPLem_001b}
\end{alignat}
\end{subequations}
for
$[c_{h_{M-2\lfloor\frac{M}{2}\rfloor}},\cdots,c_{h_{M}}]^\tsc{t}$~\eqref{Eq_AELRP_s_RP_ss_MIPPRPLem_001a} and for
$[c_{h_{M-1-2\lfloor\frac{M-1}{2}\rfloor}},\cdots,c_{h_{M-1}}]^\tsc{t}$~\eqref{Eq_AELRP_s_RP_ss_MIPPRPLem_001b}, respectively.
In matrix-form, we have
\begin{subequations}
                                                                                                       \label{Eq_AELRP_s_RP_ss_MIPPRPLem_002}
\begin{alignat}{6}
\underbrace{
\left[\begin{array}{ccccc}     1&       &\cdots&                                           &                                         \\
                          \vdots&\ddots &      &\vdots                                     &\vdots                                   \\
                               0&0      &1     &{\displaystyle\frac{1       }
                                                                   {(2+1)\;2^2}\binom{M-2}
                                                                                     {2  }}&{\displaystyle\frac{1       }
                                                                                                               {(4+1)\;2^4}\binom{M}
                                                                                                                                 {4}}\\
                               0&0      &\cdots&1                                           &{\displaystyle\frac{1         }
                                                                                                                {(2+1)\;2^2}\binom{M}
                                                                                                                                  {2}}\\
                               0&0      &\cdots&0                                           &1                                        \\\end{array}\right]}_{\displaystyle U_{(\lfloor\frac{M}{2}\rfloor,M)}}
\left[\begin{array}{l}c_{h_{M-2\lfloor\frac{M}{2}\rfloor}}\\
                                                    \vdots\\
                                               c_{h_{M-4}}\\
                                                          \\
                                               c_{h_{M-2}}\\
                                                          \\
                                                   c_{h_M}\\\end{array}\right]
=
\left[\begin{array}{l}c_{f_{M-2\lfloor\frac{M}{2}\rfloor}}\\
                                                    \vdots\\
                                               c_{f_{M-4}}\\
                                                          \\
                                               c_{f_{M-2}}\\
                                                          \\
                                                   c_{f_M}\\\end{array}\right]
                                                                                                       \label{Eq_AELRP_s_RP_ss_MIPPRPLem_002a}\\
\underbrace{
\left[\begin{array}{ccccc}     1&       &\cdots&         &                                           \\
                          \vdots&\ddots &      &\vdots   &\vdots                                     \\
                               0&0      &1     &\vdots   &{\displaystyle\frac{1       }
                                                                             {(4+1)\;2^4}\binom{M-1}
                                                                                               {4  }}\\
                               0&0      &\cdots&1        &{\displaystyle\frac{1       }
                                                                             {(2+1)\;2^2}\binom{M-1}
                                                                                               {2  }}\\
                               0&0      &\cdots&0        &1                                          \\\end{array}\right]}_{\displaystyle U_{(\lfloor\frac{M-1}{2}\rfloor,M-1)}}
\left[\begin{array}{l}c_{h_{M-1-2\lfloor\frac{M-1}{2}\rfloor}}\\
                                                        \vdots\\
                                                   c_{h_{M-5}}\\
                                                              \\
                                                   c_{h_{M-3}}\\
                                                              \\
                                                   c_{h_{M-1}}\\\end{array}\right]
=
\left[\begin{array}{l}c_{f_{M-1-2\lfloor\frac{M-1}{2}\rfloor}}\\
                                                        \vdots\\
                                                   c_{f_{M-5}}\\
                                                              \\
                                                   c_{f_{M-3}}\\
                                                              \\
                                                   c_{f_{M-1}}\\\end{array}\right]
                                                                                                       \label{Eq_AELRP_s_RP_ss_MIPPRPLem_002b}
\end{alignat}
\end{subequations}
where the  matrices $U_{(\lfloor\frac{M}{2}\rfloor,M)}$~\eqref{Eq_AELRP_s_RP_ss_MIPPRPLem_002a} and $U_{(\lfloor\frac{M-1}{2}\rfloor,M-1)}$~\eqref{Eq_AELRP_s_RP_ss_MIPPRPLem_002b}
are upper unitriangular~\cite{Golub_vanLoan_1989a}. The corresponding linear systems~\eqref{Eq_AELRP_s_RP_ss_MIPPRPLem_002}
can be solved using backward-substitution~\cite{Golub_vanLoan_1989a}. To obtain the general solution,
we initially remind, without going
into the details of a formal proof, a standard result of matrix calculus~\cite{Golub_vanLoan_1989a}, concerning the inverse of an upper unitriangular matrix.
%
\begin{lemma}[Inverse of an upper unitriangular matrix]
\label{Lem_AELRP_s_RP_ss_MIPPRPLem_001}
Let $U\in{\mathbb R}^{n\times n}$ be an upper unitriangular matrix
\begin{subequations}
                                                                                                       \label{Eq_Lem_AELRP_s_RP_ss_MIPPRPLem_001_001}
\begin{alignat}{6}
u_{i,i}=&1\quad 1\leq i \leq n
                                                                                                       \label{Eq_Lem_AELRP_s_RP_ss_MIPPRPLem_001_001a}\\
u_{i,j}=&0\quad \begin{array}{c}j <   i       \\
                                1 <   i \leq n\\\end{array}
                                                                                                       \label{Eq_Lem_AELRP_s_RP_ss_MIPPRPLem_001_001b}\\
U=&\left[\begin{array}{ccccc}     1&u_{1,2}&\cdots&u_{1,n-1}&  u_{1,n}\\
                                  0&1      &\cdots&u_{2,n-1}&  u_{2,n}\\
                             \vdots&       &\ddots&\vdots   &\vdots   \\
                                  0&0      &\cdots&1        &u_{n-1,n}\\
                                  0&0      &\cdots&0        &1        \\\end{array}\right]
                                                                                                       \label{Eq_Lem_AELRP_s_RP_ss_MIPPRPLem_001_001c}
\end{alignat}
\end{subequations}
Its inverse $U^{-1}$ exists and is also an upper unitriangular matrix
\begin{subequations}
                                                                                                       \label{Eq_Lem_AELRP_s_RP_ss_MIPPRPLem_001_002}
\begin{alignat}{6}
\check u_{i,i}=&1\qquad 1\leq i \leq n
                                                                                                       \label{Eq_Lem_AELRP_s_RP_ss_MIPPRPLem_001_002a}\\
\check u_{i,j}=&0\qquad \begin{array}{c}j <   i       \\
                                        1 <   i \leq n\\\end{array}
                                                                                                       \label{Eq_Lem_AELRP_s_RP_ss_MIPPRPLem_001_002b}\\
U^{-1}=&\left[\begin{array}{ccccc}     1&\check u_{1,2}&\cdots&\check u_{1,n-1}&\check u_{1,n}  \\
                                       0&1             &\cdots&\check u_{2,n-1}&\check u_{2,n}  \\
                                  \vdots&              &\ddots&\vdots          &\vdots          \\
                                       0&0             &\cdots&1               &\check u_{n-1,n}\\
                                       0&0             &\cdots&0               &1        \\\end{array}\right]
                                                                                                       \label{Eq_Lem_AELRP_s_RP_ss_MIPPRPLem_001_002c}
\end{alignat}
whose nonzero elements $\check u_{i,j}$ ($j\geq i$) satisfy the recurrence relations
\begin{alignat}{6}
\check u_{n,n}=&1
                                                                                                       \label{Eq_Lem_AELRP_s_RP_ss_MIPPRPLem_001_002d}\\
\check u_{n-k,n-k+s}=&-\sum_{\ell=1}^{s}u_{n-k,n-k+\ell}\;\check u_{n-k+\ell,n-k+s}\qquad \begin{array}{c} 1\leq k < n\\
                                                                                                         1\leq s \leq k\\\end{array}
                                                                                                       \label{Eq_Lem_AELRP_s_RP_ss_MIPPRPLem_001_002e}
\end{alignat}
\end{subequations}
\end{lemma}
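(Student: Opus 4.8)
The plan is to obtain the inverse by solving the matrix equation $UX=I$ one column at a time by backward substitution, which simultaneously yields invertibility, the upper unitriangular structure of $U^{-1}$, and the recurrence \eqref{Eq_Lem_AELRP_s_RP_ss_MIPPRPLem_001_002e}. First, since $U$ is upper triangular with unit diagonal \eqref{Eq_Lem_AELRP_s_RP_ss_MIPPRPLem_001_001}, $\det U=\prod_{i=1}^{n}u_{i,i}=1\neq 0$, so $U$ is invertible; as $U$ is square, $U^{-1}$ is the unique solution $X$ of $UX=I$, and it suffices to determine that $X$.

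Next, fix a column index $j\in\{1,\dots,n\}$ and let $x=(x_1,\dots,x_n)^\tsc{t}$ denote the $j$-th column of $X$, so that $Ux=e_j$, the $j$-th canonical basis vector. Since $u_{i,m}=0$ for $m<i$ and $u_{i,i}=1$, the $i$-th scalar equation reads $x_i+\sum_{m=i+1}^{n}u_{i,m}x_m=\delta_{ij}$, and reading these equations from $i=n$ upward determines $x_n,x_{n-1},\dots,x_1$ uniquely (backward substitution). An induction on decreasing $i$ shows $x_i=0$ whenever $i>j$: for $i=n$ and $j<n$ the last equation gives $x_n=\delta_{nj}=0$, and if $x_m=0$ for all $m$ with $n\geq m>i>j$, the $i$-th equation gives $x_i=\delta_{ij}-\sum_{m=i+1}^{n}u_{i,m}x_m=0$. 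Hence each column of $X$ vanishes strictly below the diagonal, so $X$ is upper triangular; feeding $x_m=0$ ($m>j$) back into the $j$-th equation gives $x_j=1$, so $X$ is upper unitriangular. Setting $\check u_{i,j}:=x_i$, with $x$ the $j$-th column of $X$, this establishes \eqref{Eq_Lem_AELRP_s_RP_ss_MIPPRPLem_001_002a}--\eqref{Eq_Lem_AELRP_s_RP_ss_MIPPRPLem_001_002c}, and in particular $\check u_{n,n}=1$, which is \eqref{Eq_Lem_AELRP_s_RP_ss_MIPPRPLem_001_002d}.

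Finally, for $i<j$ the $i$-th equation of $Ux=e_j$, using $\check u_{m,j}=0$ for $m>j$ and $u_{i,i}=1$, collapses to $\check u_{i,j}=-\sum_{m=i+1}^{j}u_{i,m}\,\check u_{m,j}$. Substituting $i=n-k$ and $j=n-k+s$ (equivalently $1\leq k<n$ and $1\leq s\leq k$) and writing the summation index as $m=n-k+\ell$ with $\ell=1,\dots,s$ reproduces exactly \eqref{Eq_Lem_AELRP_s_RP_ss_MIPPRPLem_001_002e}. The recurrence is self-consistent because it expresses $\check u_{n-k,n-k+s}$ through the entries $\check u_{n-k+\ell,n-k+s}$ ($\ell\geq1$) lying below it in the same column, which are reached earlier in the backward substitution (starting from the diagonal entry $\check u_{n-k+s,n-k+s}=1$), so it does reconstruct $U^{-1}$. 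The argument is wholly elementary; the only place where care is needed is the index translation between the natural indices $(i,j)$ of backward substitution and the offset indices $(n-k,n-k+s)$ used in the statement, plus the standard observation that for a square matrix the computed right inverse is the genuine inverse.
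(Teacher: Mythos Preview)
Your proof is correct and follows precisely the route the paper indicates: the paper's own proof is only a two-line sketch stating that $\det U=1$ by induction and that the recurrence \eqref{Eq_Lem_AELRP_s_RP_ss_MIPPRPLem_001_002} is ``a simple exercise of matrix calculus, directly obtained from the backward-substitution algorithm for solving $Ux=b$'', and you have filled in exactly those details. Your handling of the index translation $(i,j)\mapsto(n-k,n-k+s)$ and of the truncation of the sum (using $\check u_{m,j}=0$ for $m>j$) is accurate.
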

%
%
\begin{proof}
It is straightforward to show, by induction, that ${\rm det}U=1$.
The proof by induction of~\eqref{Eq_Lem_AELRP_s_RP_ss_MIPPRPLem_001_002} is a simple exercise of matrix calculus, directly obtained from the backward-substitution algorithm
for solving $Ux=b$~\cite{Golub_vanLoan_1989a}.\qed
\end{proof}
%

This recurrence is applied to compute the inverse of the upper unitriangular matrices \eqref{Eq_AELRP_s_RP_ss_MIPPRPLem_002} of the linear system~\eqref{Eq_Lem_AELRP_s_RP_ss_PRP_001_001d}
of \lemrefnp{Lem_AELRP_s_RP_ss_PRP_001}.
 
%
\begin{lemma}[Inverse of the matrices in~\lemrefnp{Lem_AELRP_s_RP_ss_PRP_001}]
\label{Lem_AELRP_s_RP_ss_MIPPRPLem_002}
Assume $N\leq\lfloor\frac{M}{2}\rfloor+1$. Let $U_{(N,M)}\in{\mathbb R}^{N\times N}$ be an upper unitriangular matrix whose elements are given by
\begin{subequations}
                                                                                                       \label{Eq_Lem_AELRP_s_RP_ss_MIPPRPLem_002_001}
\begin{alignat}{6}
\begin{array}{lcll}
(U_{(N,M)})_{N-\ell,N-\ell-k}&=&0                                                          &\;0\leq k\leq N-1-\ell\\
(U_{(N,M)})_{N-\ell,N-\ell}  &=&1                                                          &                      \\
(U_{(N,M)})_{N-\ell,N-\ell+k}&=&{\displaystyle\frac{1}{(2k+1)2^{2k}}\binom{M-2\ell+2k}{2k}}&\;0\leq k\leq \ell    \\\end{array}
\quad;\quad 0\leq \ell < N-1 \quad;\quad N\leq\lfloor\frac{M}{2}\rfloor+1
                                                                                                       \label{Eq_Lem_AELRP_s_RP_ss_MIPPRPLem_002_001a}
\end{alignat}
Its inverse $U_{(N,M)}^{-1}$ is also an upper unitriangular matrix whose elements are given by
\begin{alignat}{6}
\begin{array}{lcll}
(U^{-1}_{(N,M)})_{N-\ell,N-\ell-k}&=&0                                                    &\;0\leq k\leq N-1-\ell\\
(U^{-1}_{(N,M)})_{N-\ell,N-\ell}  &=&1                                                    &                      \\
(U^{-1}_{(N,M)})_{N-\ell,N-\ell+k}&=&{\displaystyle\tau_{2k}\frac{(M-2\ell+2k)!}{(M-2\ell)!}}&\;0\leq k\leq \ell    \\\end{array}
\quad;\quad 0\leq \ell < N-1 \quad;\quad N\leq\lfloor\frac{M}{2}\rfloor+1
                                                                                                       \label{Eq_Lem_AELRP_s_RP_ss_MIPPRPLem_002_001b}
\end{alignat}
where the numbers $\tau_{2k}$ \tabref{Tab_Lem_AELRP_s_RPERR_ss_D_001_001} are defined by the recurrence~\eqref{Eq_Lem_AELRP_s_RPERR_ss_D_001_001c}.
\end{subequations}
\end{lemma}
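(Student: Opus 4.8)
The plan is to verify directly that the matrix $U^{-1}_{(N,M)}$ with entries given by~\eqref{Eq_Lem_AELRP_s_RP_ss_MIPPRPLem_002_001b} is indeed the inverse of $U_{(N,M)}$, by checking that its entries satisfy the backward-substitution recurrence~\eqref{Eq_Lem_AELRP_s_RP_ss_MIPPRPLem_001_002} of~\lemrefnp{Lem_AELRP_s_RP_ss_MIPPRPLem_001}. Since $U_{(N,M)}$ is upper unitriangular by~\eqref{Eq_Lem_AELRP_s_RP_ss_MIPPRPLem_002_001a}, \lemrefnp{Lem_AELRP_s_RP_ss_MIPPRPLem_001} guarantees that $U^{-1}_{(N,M)}$ exists, is upper unitriangular, and is \emph{uniquely} determined by that recurrence; so it suffices to show the proposed entries solve it. The diagonal and below-diagonal parts are immediate: $(U^{-1}_{(N,M)})_{N-\ell,N-\ell}=\tau_0\cdot 1=1$ since $\tau_0=1$ by~\eqref{Eq_Lem_AELRP_s_RPERR_ss_D_001_001c}, and the zero entries below the diagonal match trivially.

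The substance is the off-diagonal recurrence~\eqref{Eq_Lem_AELRP_s_RP_ss_MIPPRPLem_001_002e}. Fixing a row index $N-\ell$ and a superdiagonal offset $s$ (with $1\le s\le\ell$), I would substitute the proposed forms of both $(U_{(N,M)})_{N-\ell,N-\ell+j}$ and $(U^{-1}_{(N,M)})_{N-\ell+j,\,N-\ell+s}$ into the right-hand side of~\eqref{Eq_Lem_AELRP_s_RP_ss_MIPPRPLem_001_002e}. Writing $j$ for the inner summation index, the term is
\[
-\,(U_{(N,M)})_{N-\ell,N-\ell+j}\,(U^{-1}_{(N,M)})_{N-\ell+j,\,N-\ell+s}
= -\,\frac{1}{(2j+1)2^{2j}}\binom{M-2\ell+2j}{2j}\;\tau_{2(s-j)}\,\frac{(M-2\ell+2s)!}{(M-2\ell+2j)!}.
\]
Expanding the binomial coefficient, the factor $(M-2\ell+2j)!$ cancels, leaving $(M-2\ell+2s)!/(M-2\ell)!$ as a common factor (which is exactly the prefactor of the claimed entry $(U^{-1}_{(N,M)})_{N-\ell,N-\ell+s}$), multiplied by
\[
-\sum_{j=1}^{s}\frac{\tau_{2(s-j)}}{(2j+1)!\,2^{2j}}.
\]
Reindexing with $s':=s-j$, this sum is precisely $-\sum_{j=1}^{s}\tau_{2s-2j}/\bigl(2^{2j}(2j+1)!\bigr)$, which by the second form of the recurrence~\eqref{Eq_Lem_AELRP_s_RPERR_ss_D_001_001c} equals $\tau_{2s}$. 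Hence the right-hand side of~\eqref{Eq_Lem_AELRP_s_RP_ss_MIPPRPLem_001_002e} equals $\tau_{2s}(M-2\ell+2s)!/(M-2\ell)!$, which is the claimed entry. This establishes~\eqref{Eq_Lem_AELRP_s_RP_ss_MIPPRPLem_002_001b}.

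The only mildly delicate points are bookkeeping ones: one must check that all indices arising on the right-hand side of~\eqref{Eq_Lem_AELRP_s_RP_ss_MIPPRPLem_001_002e} stay within the valid ranges ($0\le s-j\le\ell-j$, so that the closed form for $(U^{-1}_{(N,M)})_{N-\ell+j,N-\ell+s}$ applies), that the hypothesis $N\le\lfloor\frac{M}{2}\rfloor+1$ ensures the binomial arguments $M-2\ell+2k$ are nonnegative throughout, and that the algebra of relabelling the superdiagonals $n-k=N-\ell$ in~\eqref{Eq_Lem_AELRP_s_RP_ss_MIPPRPLem_001_002e} correctly matches the $(N,M)$-indexing used here. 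I expect the main obstacle to be purely notational — aligning the two indexing conventions and the three cases of~\eqref{Eq_Lem_AELRP_s_RP_ss_MIPPRPLem_002_001} with the generic recurrence — rather than any genuine mathematical difficulty, since the core identity is nothing more than the defining recurrence~\eqref{Eq_Lem_AELRP_s_RPERR_ss_D_001_001c} for the $\tau_{2k}$ reorganized.
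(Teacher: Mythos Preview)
Your proposal is correct and takes essentially the same approach as the paper: both invoke \lemrefnp{Lem_AELRP_s_RP_ss_MIPPRPLem_001}, substitute the claimed formulas into the backward-substitution recurrence~\eqref{Eq_Lem_AELRP_s_RP_ss_MIPPRPLem_001_002e}, observe the factorial cancellation that leaves $(M-2\ell+2s)!/(M-2\ell)!$ as a common prefactor, and recognise the remaining sum as the defining recurrence~\eqref{Eq_Lem_AELRP_s_RPERR_ss_D_001_001c} for $\tau_{2s}$. The only cosmetic difference is that the paper phrases this as an explicit induction on the row parameter (with a base-case check for small~$\ell$), whereas you phrase it as a direct verification that the proposed entries satisfy the recurrence determining the unique inverse; the underlying computation is identical up to relabelling of indices.
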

%
%
\begin{proof}
To simplify notation let $(U_{(N,M)})_{ij}=u_{ij}$ and $(U^{-1}_{(N,M)})_{ij}=\check u_{ij}$
By \lemrefnp{Lem_AELRP_s_RP_ss_MIPPRPLem_001} $U_{(N,M)}^{-1}$ is also an upper unitriangular matrix.
It is easy to verify, by straightforward computation, using~\eqref{Eq_Lem_AELRP_s_RP_ss_MIPPRPLem_001_002},
that~\eqref{Eq_Lem_AELRP_s_RP_ss_MIPPRPLem_002_001b} holds for $0\leq\ell\leq3$. To prove that~\eqref{Eq_Lem_AELRP_s_RP_ss_MIPPRPLem_002_001b} is valid for $0\leq\ell\leq N-1$,
by induction, suppose that~\eqref{Eq_Lem_AELRP_s_RP_ss_MIPPRPLem_002_001b} is valid for $1\leq\ell\leq m$. Then, from~\eqref{Eq_Lem_AELRP_s_RP_ss_MIPPRPLem_001_002e}
\begin{subequations}
                                                                                                       \label{Eq_Lem_AELRP_s_RP_ss_MIPPRPLem_002_003}
\begin{alignat}{6}
\check u_{N-(m+1),N-(m+1)+k}=&-\sum_{s=1}^{k}u_{N-(m+1),N-(m+1)+s}\;\check u_{N-(m+1)+s,N-(m+1)+k}
                                                                                                       \notag\\
                            =&-\sum_{s=1}^{k}u_{N-(m+1),N-(m+1)+s}\;\check u_{N-(m+1-s),N-(m+1-s)+(k-s)}
                                                                                                       \label{Eq_Lem_AELRP_s_RP_ss_MIPPRPLem_002_003a}
\end{alignat}
and since $s\geq1\Longrightarrow m+1-s\leq m$, we may replace $\check u_{N-(m+1-s),N-(m+1-s)+(k-s)}$ in~\eqref{Eq_Lem_AELRP_s_RP_ss_MIPPRPLem_002_003a}
by~\eqref{Eq_Lem_AELRP_s_RP_ss_MIPPRPLem_002_001b}, so that
\begin{alignat}{6}
\check u_{N-(m+1),N-(m+1)+k}=&\sum_{s=1}^{k}\frac{-1}{2^{2s}\;(2s+1)}\binom{M-2(m+1)+2s}{2s}\tau_{2k-2s}\frac{(M-2(m+1-s)+2(k-s))!}{(M-2(m+1-s))!}
                                                                                                       \notag\\
                            =&\sum_{s=1}^{k}\frac{-\tau_{2k-2s}}{2^{2s}\;(2s+1)!}\frac{(M-2(m+1)+2k)!}{(M-2(m+1))!}
                                                                                                       \notag\\
                            =&\left(\sum_{s=1}^{k}\frac{-\tau_{2k-2s}}{2^{2s}\;(2s+1)!}\right)\frac{(M-2(m+1)+2k)!}{(M-2(m+1))!}
                            = \tau_{2k}\frac{(M-2(m+1)+2k)!}{(M-2(m+1))!}
                                                                                                       \label{Eq_Lem_AELRP_s_RP_ss_MIPPRPLem_002_003b}
\end{alignat}
because, setting $\ell:=k-s$
\begin{equation}
\sum_{s=1}^{k}\frac{-\tau_{2k-2s}}{2^{2s}\;(2s+1)!}=\sum_{\ell=0}^{k-1}\frac{-\tau_{2\ell}}{2^{2k-2\ell}\;(2k-2\ell+1)!}=\tau_{2k}
                                                                                                       \label{Eq_Lem_AELRP_s_RP_ss_MIPPRPLem_002_003c}
\end{equation}
by~\eqref{Eq_Lem_AELRP_s_RPERR_ss_D_001_001c}.
\end{subequations}
This completes the proof of~\eqref{Eq_Lem_AELRP_s_RP_ss_MIPPRPLem_002_001b} by induction.\qed
\end{proof}
%
%
\begin{lemma}[Solution of the linear system~\eqref{Eq_Lem_AELRP_s_RP_ss_PRP_001_001d}]
\label{Lem_AELRP_s_RP_ss_MIPPRPLem_003}
The solution of the linear system~\eqref{Eq_Lem_AELRP_s_RP_ss_PRP_001_001d} is given by~\eqref{Eq_Lem_AELRP_s_RP_ss_PRP_001_001f}.
\end{lemma}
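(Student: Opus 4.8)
The plan is to obtain the solution of \eqref{Eq_Lem_AELRP_s_RP_ss_PRP_001_001d} by inverting, block by block, the upper unitriangular matrices whose inverses have already been computed in \lemrefnp{Lem_AELRP_s_RP_ss_MIPPRPLem_002}. First I would note that, because the sums in \eqref{Eq_Lem_AELRP_s_RP_ss_PRP_001_001d} advance in steps of $2$, the coefficients $c_{h_m}$ with $m$ of the same parity as $M$ and those with $m$ of the opposite parity are coupled only among themselves; this yields the two independent triangular subsystems \eqref{Eq_AELRP_s_RP_ss_MIPPRPLem_001a} and \eqref{Eq_AELRP_s_RP_ss_MIPPRPLem_001b}, written in matrix form in \eqref{Eq_AELRP_s_RP_ss_MIPPRPLem_002a} and \eqref{Eq_AELRP_s_RP_ss_MIPPRPLem_002b}, whose coefficient matrices are exactly the upper unitriangular matrices $U_{(N,M)}$ with $N=\lfloor\tfrac{M}{2}\rfloor+1$ and $U_{(N',M-1)}$ with $N'=\lfloor\tfrac{M-1}{2}\rfloor+1$ treated in \lemrefnp{Lem_AELRP_s_RP_ss_MIPPRPLem_002}; the dimension hypothesis of that lemma holds with equality in both cases, and invertibility is ensured by $\det U=1$, which also gives uniqueness of the solution.

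Next I would apply \lemrefnp{Lem_AELRP_s_RP_ss_MIPPRPLem_002}, which furnishes the entries of $U_{(N,M)}^{-1}$ as $(U^{-1}_{(N,M)})_{N-\ell,N-\ell+k}=\tau_{2k}\,(M-2\ell+2k)!/(M-2\ell)!$ for $0\le k\le\ell$. Multiplying \eqref{Eq_AELRP_s_RP_ss_MIPPRPLem_002a} on the left by $U_{(N,M)}^{-1}$ and reading off its $(N-\ell)$-th row --- recalling that the $(N-\ell)$-th components of the two vectors are $c_{h_{M-2\ell}}$ and $c_{f_{M-2\ell}}$ --- gives
\[
c_{h_{M-2\ell}}=\sum_{k=0}^{\ell}\tau_{2k}\,\frac{(M-2\ell+2k)!}{(M-2\ell)!}\,c_{f_{M-2\ell+2k}},\qquad \ell=0,\dots,\Big\lfloor\tfrac{M}{2}\Big\rfloor .
\]
Setting $m:=M-2\ell$, so that $\ell=\lfloor\tfrac{M-m}{2}\rfloor$ because $M-m$ is even, this is exactly \eqref{Eq_Lem_AELRP_s_RP_ss_PRP_001_001f} for every $m\in\{0,\dots,M\}$ of the same parity as $M$. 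Repeating the identical computation on \eqref{Eq_AELRP_s_RP_ss_MIPPRPLem_002b}, with $M$ replaced by $M-1$ throughout, establishes \eqref{Eq_Lem_AELRP_s_RP_ss_PRP_001_001f} for every $m$ of the opposite parity. Since the two parity classes together exhaust $\{0,\dots,M\}$, formula \eqref{Eq_Lem_AELRP_s_RP_ss_PRP_001_001f} holds for all $m\in\{0,\dots,M\}$.

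I do not expect any genuine difficulty: the whole content is carried by \lemrefnp{Lem_AELRP_s_RP_ss_MIPPRPLem_002}. The only point requiring care is the index dictionary that matches the matrix row and column labels to the coefficient subscripts $m$ --- together with the trivial observation that on each block $\lfloor\tfrac{M-m}{2}\rfloor$ coincides with the integer $\ell$ because $M-m$ has fixed parity there --- which is entirely routine.
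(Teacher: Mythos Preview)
Your proposal is correct and follows essentially the same route as the paper: split \eqref{Eq_Lem_AELRP_s_RP_ss_PRP_001_001d} by parity into the two unitriangular subsystems \eqref{Eq_AELRP_s_RP_ss_MIPPRPLem_002}, invert each using \lemrefnp{Lem_AELRP_s_RP_ss_MIPPRPLem_002}, and then reunify the two parity classes via the substitution $m=M-2\ell$ (resp.\ $m=M-1-2\ell$), noting that in each case $\ell=\lfloor\tfrac{M-m}{2}\rfloor$. The paper's own proof is slightly terser on the index bookkeeping, but the structure and content are identical.
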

%
%
\begin{proof}
The unitriangular matrices $U_{(\lfloor\frac{M}{2}\rfloor,M)}$~\eqref{Eq_AELRP_s_RP_ss_MIPPRPLem_002a} and $U_{(\lfloor\frac{M-1}{2}\rfloor,M-1)}$~\eqref{Eq_AELRP_s_RP_ss_MIPPRPLem_002b}
are of the type defined in \lemrefnp{Lem_AELRP_s_RP_ss_MIPPRPLem_002}. Using the result~\eqref{Eq_Lem_AELRP_s_RP_ss_MIPPRPLem_002_001b} of \lemrefnp{Lem_AELRP_s_RP_ss_MIPPRPLem_002}
for the inverse matrices $U^{-1}_{(\lfloor\frac{M}{2}\rfloor,M)}$ and $U^{-1}_{(\lfloor\frac{M-1}{2}\rfloor,M-1)}$, the solution of the linear systems~\eqref{Eq_AELRP_s_RP_ss_MIPPRPLem_002} is
\begin{subequations}
                                                                                                       \label{Eq_Lem_AELRP_s_RP_ss_MIPPRPLem_003_001}
\begin{alignat}{6}
&c_{h_{M-2\ell}}  &=&\sum_{k=0}^{\ell}\tau_{2k} c_{f_{M-2\ell+2k}}\frac{(M-2\ell+2k)!}
                                                                       {(M-2\ell)!   }     &\quad \ell=&0,\cdots,\lfloor\frac{M}{2}\rfloor
                                                                                                       \label{Eq_Lem_AELRP_s_RP_ss_MIPPRPLem_003_001a}\\
&c_{h_{M-1-2\ell}}&=&\sum_{k=0}^{\ell}\tau_{2k} c_{f_{M-1-2\ell+2k}}\frac{(M-1-2\ell+2k)!}
                                                                         {(M-1-2\ell)!   } &\quad \ell=&0,\cdots,\lfloor\frac{M-1}{2}\rfloor
                                                                                                       \label{Eq_Lem_AELRP_s_RP_ss_MIPPRPLem_003_001b}
\end{alignat}
\end{subequations}
where the numbers $\tau_{2k}$ \tabref{Tab_Lem_AELRP_s_RPERR_ss_D_001_001} are defined by the recurrence~\eqref{Eq_Lem_AELRP_s_RPERR_ss_D_001_001c}.
Since
\begin{subequations}
                                                                                                       \label{Eq_Lem_AELRP_s_RP_ss_MIPPRPLem_003_002}
\begin{alignat}{6}
&m=M-2\ell  &\Longrightarrow&2\ell  &=&M-m&\Longrightarrow&\ell&=&\lfloor\frac{M-m}{2}\rfloor
                                                                                                       \label{Eq_Lem_AELRP_s_RP_ss_MIPPRPLem_003_002a}\\
&m=M-2\ell-1&\Longrightarrow&2\ell+1&=&M-m&\Longrightarrow&\ell&=&\lfloor\frac{M-m}{2}\rfloor
                                                                                                       \label{Eq_Lem_AELRP_s_RP_ss_MIPPRPLem_003_002b}
\end{alignat}
\end{subequations}
the 2 solutions~\eqref{Eq_Lem_AELRP_s_RP_ss_MIPPRPLem_003_001} can be grouped into~\eqref{Eq_Lem_AELRP_s_RP_ss_PRP_001_001f}, which completes the proof.\qed
\end{proof}
%

%
%
%
%
%
%
%
%
%
%
\section{Error of polynomial reconstruction}\label{AELRP_s_EPR}
%
%
%
%
%
%
%
%
%
%

We consider in this paper reconstruction on a homogeneous grid (recall that~\eqref{Eq_Lem_AELRP_s_RPERR_ss_RP_001_001} hold iff $\Delta x=\const$).
The reconstruction polynomials are computed by interpolating $f(x)$ sampled on an appropriately chosen stencil~\defref{Def_AELRP_s_EPR_ss_PR_001}. We examine the relations and order-of-accuracy of
polynomial reconstruction~\defref{Def_AELRP_s_RPERR_ss_RP_002} on an arbitrary stencil $\tsc{s}_{i,M_-,M_+}$~\defref{Def_AELRP_s_EPR_ss_PR_001} defined on a homogeneous grid.
The \tsc{weno}~\cite{Liu_Osher_Chan_1994a,
                     Jiang_Shu_1996a,
                     Balsara_Shu_2000a,
                     Henrick_Aslam_Powers_2005a,
                     Borges_Carmona_Costa_Don_2008a,
                     Gerolymos_Senechal_Vallet_2009a}
schemes are based on the convex combination of polynomial reconstructions on a family of substencils. For the development of the order-of-accuracy relations,
it is necessary to develop results on the approximation-error of polynomial reconstruction for the general stencil $\tsc{s}_{i,M_-,M_+}$, around point $i$ (not necessarily contained in the stencil),
with $M_-$ neighbours on the left, and $M_+$ neighbours on the right~\defref{Def_AELRP_s_EPR_ss_PR_001}.

%
%
%
%
%
\subsection{Polynomial reconstruction}\label{AELRP_s_EPR_ss_PR}
%
%
%
%
%

The part concerning the approximation of $f(x)$ by a polynomial $p_f(x;\tsc{s}_{i,M_-,M_+},\Delta x)$ is found in most textbooks of numerical analysis~\cite{Henrici_1964a,
                                                                                                                                                             Phillips_2003a}.
It is only briefly included here for use in deriving the results concerning the approximation of $h(x)$ by the polynomial $p_h(x;\tsc{s}_{i,M_-,M_+},\Delta x)$
which forms a reconstruction pair with $p_f$~\defref{Def_AELRP_s_RPERR_ss_RP_001}. To obtain the relations concerning $p_h(x;\tsc{s}_{i,M_-,M_+},\Delta x)$ it is not
very practical to work with the Newton divided-differences form of $p_f$~\cite{Henrici_1964a,
                                                                               Phillips_2003a},
which are widely used in \tsc{weno} theory~\cite{Harten_Osher_1987a,
                                                 Harten_Engquist_Osher_Chakravarthy_1987a,
                                                 Liu_Osher_Chan_1994a,
                                                 Shu_1998a,
                                                 Shu_2009a}.
It is, instead, preferable to work with the standard form of $p_f$ expanded in powers of $(x-x_i)$,
whose coefficients can be readily expressed~\prpref{Prp_AELRP_s_EPR_ss_PR_001} from the coefficients of the inverse of the Vandermonde matrix~\cite{Klinger_1967a,
                                                                                                                                                  Rushanan_1989a}
corresponding to the stencil $\tsc{s}_{i,M_-,M_+}$~\defref{Def_AELRP_s_EPR_ss_PR_001}.
This representation of $p_f$ allows direct use of the formulas relating the coefficients of $p_h$ and $p_f$ \lemref{Lem_AELRP_s_RP_ss_PRP_001}.
%
\begin{definition}[Stencil]
\label{Def_AELRP_s_EPR_ss_PR_001}
Consider a 1-D homogeneous computational mesh
\begin{subequations}
                                                                                                       \label{Eq_Def_AELRP_s_EPR_ss_PR_001_001}
\begin{equation}
x_i=x_1+(i-1)\Delta x\qquad\qquad \Delta x = {\rm const}\in\mathbb{R}_{>0}
                                                                                                       \label{Eq_Def_AELRP_s_EPR_ss_PR_001_001a}
\end{equation}
Assume
\begin{equation}
M:=M_-+M_+\geq0
                                                                                                       \label{Eq_Def_AELRP_s_EPR_ss_PR_001_001b}
\end{equation}
The set of contiguous points
\begin{equation}
\tsc{s}_{i,M_-,M_+}:=\left\{i-M_-,\cdots,i+M_+\right\}
                                                                                                       \label{Eq_Def_AELRP_s_EPR_ss_PR_001_001c}
\end{equation}
is defined as the discretization-stencil in the neighbourhood of $i$, with $M_-$ neighbours to the left and $M_+$ neighbours to the right.
The stencil $\tsc{s}_{i,M_-,M_+}$~\eqref{Eq_Def_AELRP_s_EPR_ss_PR_001_001c} contains $M+1>0$ points and has a length of $M$ intervals.
If $M_\pm\geq0$ then the stencil contains the pivot-point $i$. If $M_-M_+<0$ then the stencil does not contain the pivot-point $i$.
We will note
\begin{equation}
[\tsc{s}_{i,M_-,M_+}]:=[x_{i-M_-},x_{i+M_+}]\;\subset\;{\mathbb R}
                                                                                                       \label{Eq_Def_AELRP_s_EPR_ss_PR_001_001d}
\end{equation}
the interval defined by the extreme points of the stencil.
\end{subequations}
\qed
\end{definition}
%
%
\begin{remark}[Stencils and notation]
\label{Rmk_AELRP_s_EPR_ss_PR_001}
In our notation the stencil is defined by a reference (pivot) point $i$, and by the number of neighbours $M_\pm$ on each side of point $i$~\defref{Def_AELRP_s_EPR_ss_PR_001}.
The position of the pivot point $i$ in the stencil is arbitrary. This is necessary for obtaining relations for all of the \tsc{weno} stencils with reference to the same point $i$.
In the following developments, there appear quantities depending both on $M_\pm$ and on $i$ (and eventually on the values of $f$ sampled at the points of the stencil). We will systematically
note these quantities as functions of the stencil $\tsc{s}_{i,M_-,M_+}$. On the other hand, there appear quantities, which depend on $M_\pm$ but not on the pivot point $i$
(neither on the values of $f$ sampled at the points of the stencil). We will systematically
note these quantities as functions of $M_-$ and $M_+$, and not of $\tsc{s}_{i,M_-,M_+}$. This difference is important when considering order-of-accuracy relations ({\em eg}~\crlrefnp{Crl_AELRP_s_EPR_ss_AEhdf_002}).\qed
\end{remark}
%
%
\begin{definition}[Vandermonde matrix on $\tsc{s}_{i,M_-,M_+}$]
\label{Def_AELRP_s_EPR_ss_PR_002}
Let $M:=M_-+M_+$ and assume $M\geq 0$. The matrix ${^{M_+}_{M_-}V}\in{\mathbb R}^{(M+1)\times(M+1)}$ with elements $({^{M_+}_{M_-}V})_{ij}$
\begin{alignat}{6}
{^{M_+}_{M_-}V}:=\left[\begin{array}{cccc}(-M_-)^0&(-M_-)^1&\cdots&(-M_-)^M\\
                                          \vdots  &        &      &        \\
                                          (+M_+)^0&(+M_+)^1&\cdots&(+M_+)^M\\\end{array}\right]\qquad M:=M_-+M_+\geq0
                                                                                                       \label{Eq_Def_AELRP_s_EPR_ss_PR_002_001}
\end{alignat}
is the Vandermonde matrix~{\textup{\cite{Klinger_1967a,
                                         Rushanan_1989a}}} defined on the stencil $\tsc{s}_{i,M_-,M_+}$ \defref{Def_AELRP_s_EPR_ss_PR_001}.
Since ${^{M_+}_{M_-}V}$ is a Vandermonde matrix, its inverse ${^{M_+}_{M_-}V}^{-1}$ exists~{\textup{\cite{Macon_Spitzbart_1958a,
                                                                                                          Eisinberg_Fedele_Imbrogno_2006a}}}.
The elements of ${^{M_+}_{M_-}V}^{-1}\in{\mathbb R}^{(M+1)\times(M+1)}$ will be noted $({^{M_+}_{M_-}V^{-1}})_{ij}$.\qed
\end{definition}
%
%
\begin{lemma}[Inverse Vandermonde matrix on $\tsc{s}_{i,M_-,M_+}$]
\label{Lem_AELRP_s_EPR_ss_PR_001}
\begin{subequations}
                                                                                                       \label{Eq_Lem_AELRP_s_EPR_ss_PR_001_001}
Assume the conditions of \defrefnp{Def_AELRP_s_EPR_ss_PR_002}. Then the entries of the inverse of the Vandermonde matrix ${^{M_+}_{M_-}V}$ \eqref{Eq_Def_AELRP_s_EPR_ss_PR_002_001} on $\tsc{s}_{i,M_-,M_+}$
are given by
\begin{alignat}{6}
({^{M_+}_{M_-}V}^{-1})_{ij}=\sum_{n=0}^{M+1-i}(M_-)^n\;\binom{n+i-1}{n}\;({^{M}_{0}V}^{-1})_{i+n,j}\qquad\begin{array}{l}\forall i,j\in\{1,\cdots,M+1\}\\
                                                                                                                         M:=M_-+M_+\\\end{array}
                                                                                                       \label{Eq_Lem_AELRP_s_EPR_ss_PR_001_001a}
\end{alignat}
where ${^{M}_{0}V}^{-1}$ is the inverse of the Vandermonde matrix ${^{M}_{0}V}$ on $\tsc{s}_{i,0,M}=\{i,\cdots,i+M\}$ \defref{Def_AELRP_s_EPR_ss_PR_002},
whose entries are given by\footnote{\label{ff_Lem_AELRP_s_EPR_ss_PR_001_001}${\displaystyle\strlngfk{n}{k}}$ are the unsigned Stirling numbers of the first kind~\cite{Knuth_1992a,
                                                                                                                                                                       Graham_Knuth_Patashnik_1994a,
                                                                                                                                                                       Eisinberg_Fedele_Imbrogno_2006a}
satisfying
\begin{alignat}{6}
\strlngfk{n}{0}=&\delta_{n0}
                                                                                                       \notag\\
\strlngfk{n+1}{k}=&n\strlngfk{n}{k}+\strlngfk{n}{k-1}
                                                                                                       \notag\\
m\strlngfk{n}{n-m}=&\sum_{k=0}^{m-1}\binom{n-k  }
                                          {m+1-k}\strlngfk{  n}
                                                          {n-k}
                                                                                                       \notag\\
\sum_{k=1}^{n}(-1)^k(m-1)^{k-1}\strlngfk{n-1}{k-1}=&(-1)^n\;(n-1)!\;\binom{m-1}{n-1}
                                                                                                       \notag
\end{alignat}
}
\begin{alignat}{6}
({^{M}_{0}V}^{-1})_{ij}=(-1)^{i+j}\sum_{k=1}^{M+1}\frac{     1}
                                                       {(k-1)!}\binom{k-1}
                                                                     {j-1}\strlngfk{k-1}
                                                                                   {i-1}
                                                   \qquad\forall i,j\in\{1,\cdots,M+1\}
                                                                                                       \label{Eq_Lem_AELRP_s_EPR_ss_PR_001_001b}
\end{alignat}
Define
\begin{alignat}{6}
\nu_{M_-,M_+,m,k}:=\sum_{\ell=-M_-}^{M_+}({^{M_+}_{M_-}V}^{-1})_{m+1,\ell+M_-+1}\;\ell^k
                                                                                                       \label{Eq_Lem_AELRP_s_EPR_ss_PR_001_001c}
\end{alignat}
Then the following identities hold
\begin{alignat}{6}
\nu_{M_-,M_+,m,k}=
\sum_{\ell=-M_-}^{M_+}({^{M_+}_{M_-}V}^{-1})_{m+1,\ell+M_-+1}\;\ell^k=\delta_{mk}\qquad\left.\begin{array}{c} 0 \leq k \leq M\\
                                                                                                              0 \leq m \leq M\\\end{array}\right.
                                                                                                       \label{Eq_Lem_AELRP_s_EPR_ss_PR_001_001d}
\end{alignat}
\begin{alignat}{6}
\sum_{m=0}^{M}\nu_{M_-,M_+,m,k}\;\ell^m=\ell^k                                   \qquad\left.\begin{array}{l} \forall   k\in{\mathbb N}_0      \\
                                                                                                              \forall\ell\in\{-M_-,\cdots,M_+\}\\\end{array}\right.
                                                                                                       \label{Eq_Lem_AELRP_s_EPR_ss_PR_001_001e}
\end{alignat}
\end{subequations}
\end{lemma}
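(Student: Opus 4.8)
The plan is to reduce the whole statement to the single fact that the matrix displayed in \eqref{Eq_Lem_AELRP_s_EPR_ss_PR_001_001b} really is the inverse of ${^{M}_{0}V}$, then to transport this to an arbitrary stencil by a translation (binomial-shift) argument, after which \eqref{Eq_Lem_AELRP_s_EPR_ss_PR_001_001d} and \eqref{Eq_Lem_AELRP_s_EPR_ss_PR_001_001e} are just $V^{-1}V=I$ and $VV^{-1}=I$ rewritten in index notation.

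First I would prove \eqref{Eq_Lem_AELRP_s_EPR_ss_PR_001_001b} by verifying directly that $\sum_{j=1}^{M+1}({^{M}_{0}V})_{ij}\,({^{M}_{0}V}^{-1})_{jk}=\delta_{ik}$. Since $({^{M}_{0}V})_{ij}=(i-1)^{j-1}$, substituting the candidate formula and interchanging the sum over $j$ with the Stirling-index sum over $n$ isolates the inner sum $\sum_{j}(-1)^{j}(i-1)^{j-1}\strlngfk{n-1}{j-1}$, which is exactly the left side of the fourth identity collected in footnote~\ref{ff_Lem_AELRP_s_EPR_ss_PR_001_001} (the terms with $j>n$ vanishing because $\strlngfk{n-1}{j-1}=0$), hence equals $(-1)^{n}(n-1)!\binom{i-1}{n-1}$. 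After cancelling $(n-1)!$ one is left with $(-1)^{k}\sum_{n}(-1)^{n}\binom{n-1}{k-1}\binom{i-1}{n-1}$; trinomial revision $\binom{i-1}{n-1}\binom{n-1}{k-1}=\binom{i-1}{k-1}\binom{i-k}{n-k}$ factors out $\binom{i-1}{k-1}$ and collapses the $n$-sum to $(1-1)^{\,i-k}$, which is $0$ unless $i=k$, giving $\delta_{ik}$. Because ${^{M}_{0}V}$ is Vandermonde it is invertible \defref{Def_AELRP_s_EPR_ss_PR_002}, so this right inverse is the inverse, proving \eqref{Eq_Lem_AELRP_s_EPR_ss_PR_001_001b}.

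Next, for \eqref{Eq_Lem_AELRP_s_EPR_ss_PR_001_001a} I would use that, measured from the pivot $i$, the stencil $\tsc{s}_{i,M_-,M_+}$ consists of the nodes $-M_-,\dots,M_+$, i.e.\ the nodes $0,\dots,M$ of $\tsc{s}_{i,0,M}$ \defref{Def_AELRP_s_EPR_ss_PR_001} translated by $-M_-$. Expanding $\ell^{\,j-1}=\bigl((\ell+M_-)-M_-\bigr)^{j-1}$ by the binomial theorem gives the factorization ${^{M_+}_{M_-}V}={^{M}_{0}V}\,T$ with $T$ upper triangular, $T_{l,j}=\binom{j-1}{l-1}(-M_-)^{\,j-l}$. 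A short computation (or the standard closure of such Pascal-type matrices under inversion via a sign flip) shows $(T^{-1})_{l,j}=\binom{j-1}{l-1}(M_-)^{\,j-l}$, whence ${^{M_+}_{M_-}V}^{-1}=T^{-1}\,{^{M}_{0}V}^{-1}$; writing this product out and reindexing $l=i+n$ yields precisely \eqref{Eq_Lem_AELRP_s_EPR_ss_PR_001_001a} (using $\binom{i+n-1}{i-1}=\binom{i+n-1}{n}$).

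Finally, once ${^{M_+}_{M_-}V}^{-1}$ is known to be the genuine inverse, \eqref{Eq_Lem_AELRP_s_EPR_ss_PR_001_001d} and \eqref{Eq_Lem_AELRP_s_EPR_ss_PR_001_001e} follow from the definition \eqref{Eq_Lem_AELRP_s_EPR_ss_PR_001_001c}: since $({^{M_+}_{M_-}V})_{\ell+M_-+1,\,k+1}=\ell^{k}$, for $0\le k\le M$ the sum defining $\nu_{M_-,M_+,m,k}$ is literally $({^{M_+}_{M_-}V}^{-1}{^{M_+}_{M_-}V})_{m+1,k+1}=\delta_{mk}$, which is \eqref{Eq_Lem_AELRP_s_EPR_ss_PR_001_001d}; and since $\ell^{m}=({^{M_+}_{M_-}V})_{\ell+M_-+1,\,m+1}$ for $\ell$ in the stencil, summing \eqref{Eq_Lem_AELRP_s_EPR_ss_PR_001_001c} against $\ell^{m}$ collapses through $({^{M_+}_{M_-}V}{^{M_+}_{M_-}V}^{-1})_{\ell+M_-+1,\,\ell'+M_-+1}=\delta_{\ell\ell'}$ to $\ell^{k}$, which is \eqref{Eq_Lem_AELRP_s_EPR_ss_PR_001_001e}, valid for all $k\in{\mathbb N}_0$ since $k$ enters only as the exponent of a passive summation variable. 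The only genuinely computational step is the Stirling-number manipulation behind \eqref{Eq_Lem_AELRP_s_EPR_ss_PR_001_001b}, and even that is routine once the correct footnote identity is invoked; the residual difficulty is merely keeping the index shifts between the $\{0,\dots,M\}$ stencil and the general $\tsc{s}_{i,M_-,M_+}$ straight.
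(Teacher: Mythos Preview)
Your proof is correct and in several places more direct than the paper's. For \eqref{Eq_Lem_AELRP_s_EPR_ss_PR_001_001b} the paper does not verify the formula; it quotes Theorem~1 of Eisinberg--Fedele--Imbrogno for the inverse of the Vandermonde matrix on equidistant nodes in $[0,1]$ and rescales, whereas you multiply out $VV^{-1}$ and collapse via the Stirling identity supplied in the footnote, which is self-contained. For \eqref{Eq_Lem_AELRP_s_EPR_ss_PR_001_001a} the paper argues through the uniqueness of the Lagrange interpolating polynomial under a change of pivot, invoking the later \prprefnp{Prp_AELRP_s_EPR_ss_PR_001} to match coefficients and then passing to matrix entries by linear independence of the $f_{i+\ell}$; your factorization ${^{M_+}_{M_-}V}={^{M}_{0}V}\,T$ with the Pascal-type $T$ encodes exactly the same binomial shift but stays purely matrix-algebraic. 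For \eqref{Eq_Lem_AELRP_s_EPR_ss_PR_001_001e} the paper makes a forward reference (justified by a non-circularity footnote) to \prprefnp{Prp_AELRP_s_EPR_ss_AELPR_001} via $E_f(x_{i+\ell})=0$ at the stencil nodes, while your one-line reduction to $({^{M_+}_{M_-}V}\,{^{M_+}_{M_-}V}^{-1})_{\ell+M_-+1,\ell'+M_-+1}=\delta_{\ell\ell'}$ is elementary and works for all $k\in\mathbb{N}_0$ as you note. Both approaches reach the same identities; yours trades the interpolation-theoretic viewpoint for plain linear algebra, which buys independence from later propositions at the cost of the explicit connection to Lagrange interpolation that the paper wants to highlight.
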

%
%
\begin{proof}\footnote{\label{ff_Lem_AELRP_s_EPR_ss_PR_001_002}
                       Proof of \eqref{Eq_Lem_AELRP_s_EPR_ss_PR_001_001d} is most easily obtained using \prprefnp{Prp_AELRP_s_EPR_ss_PR_001}, and
                       proof of \eqref{Eq_Lem_AELRP_s_EPR_ss_PR_001_001e} is most easily obtained using \prprefnp{Prp_AELRP_s_EPR_ss_AELPR_001},
                       which are proved below. Notice that \eqref{Eq_Lem_AELRP_s_EPR_ss_PR_001_001d} is not used in the proof of \prprefnp{Prp_AELRP_s_EPR_ss_PR_001}, nor is
                                                           \eqref{Eq_Lem_AELRP_s_EPR_ss_PR_001_001e} in the proof of \prprefnp{Prp_AELRP_s_EPR_ss_AELPR_001}.}
\begin{subequations}
                                                                                                       \label{Eq_Lem_AELRP_s_EPR_ss_PR_001_002}
Since ${^{M_+}_{M_-}V}$~\eqref{Eq_Def_AELRP_s_EPR_ss_PR_002_001} is an $(M+1)\times(M+1)$ Vandermonde matrix on $M+1$ distinct nodes
its inverse ${^{M_+}_{M_-}V}^{-1}$ exists~\cite{Macon_Spitzbart_1958a,
                                                Eisinberg_Fedele_Imbrogno_2006a}.
Macon and Spitzbart~\cite{Macon_Spitzbart_1957a,
                          Macon_Spitzbart_1958a}
have given explicit expressions for the inverse of the Vandermonde matrix on integer nodes. To prove \eqref{Eq_Lem_AELRP_s_EPR_ss_PR_001_001b}
we start from~\cite[Theorem 1, p. 973]{Eisinberg_Fedele_Imbrogno_2006a},
giving the inverse of the Vandermonde matrix on $n$ equidistant nodes on $[0,1]$, {\em ie} on $(n-1)x_i=(i-1)\;\forall\;i\in\{1,\cdots,n\}$, as
\begin{alignat}{6}
\left[\left(\dfrac{i-1}
                  {n-1}\right)^{j-1},\;i,j\in\{1,\cdots,n  \}\right]^{-1}_{ij}=
                                                                               (-1)^{i+j}\;(n-1)^{i-1}\sum_{k=1}^{n}\frac{     1}
                                                                                                                         {(k-1)!}\binom{k-1}
                                                                                                                                       {j-1}\strlngfk{k-1}
                                                                                                                                                     {i-1}
                                                                                                       \label{Eq_Lem_AELRP_s_EPR_ss_PR_001_002a}
\end{alignat}
which directly implies, setting $n=M+1$,
\begin{alignat}{6}
\left[\left(\dfrac{i-1}
                  {M  }\right)^{j-1},\;i,j\in\{1,\cdots,M+1\}\right]^{-1}_{ij}=
                                                                               (-1)^{i+j}\;M^{i-1}\sum_{k=1}^{M+1}\frac{     1}
                                                                                                                       {(k-1)!}\binom{k-1}
                                                                                                                                   {j-1}\strlngfk{k-1}
                                                                                                                                                 {i-1}
                                                                                                       \label{Eq_Lem_AELRP_s_EPR_ss_PR_001_002b}
\end{alignat}
Obviously, $M^{i-1}$ and $M^{j-1}$ in \eqref{Eq_Lem_AELRP_s_EPR_ss_PR_001_002b} are scaling factors (for $M+1$ equidistant nodes on $[0,1]$ we have $M\;\Delta x=1$).
This is clearly seen by writing the Vandermonde matrix on $\tsc{s}_{i,0,M}$ \eqref{Eq_Def_AELRP_s_EPR_ss_PR_002_001} as
\begin{alignat}{6}
{^{M}_{0}V}:=&\left[(i-1)^{j-1},\;i,j\in\{1,\cdots,M+1\}\right]
                                                                                                       \notag\\
=&
\left[\left(\dfrac{i-1}
                  {M  }\right)^{\ell-1},\;i,\ell\in\{1,\cdots,M+1\}\right]\;
\left[M^{\ell-1}\;\delta_{\ell j},\;\ell,j\in\{1,\cdots,M+1\}\right]
                                                                                                       \label{Eq_Lem_AELRP_s_EPR_ss_PR_001_002c}
\end{alignat}
and since $\left[M^{\ell-1}\;\delta_{\ell j},\;\ell,j\in\{1,\cdots,M+1\}\right]$ is a diagonal matrix
\begin{alignat}{6}
{^{M}_{0}V}^{-1}=
\left[\dfrac{\delta_{i\ell}}{M^{i-1}},\;i,\ell\in\{1,\cdots,M+1\}\right]\;
\left[\left(\dfrac{\ell-1}
                  {M     }\right)^{j-1},\;\ell,j\in\{1,\cdots,M+1\}\right]^{-1}
                                                                                                       \label{Eq_Lem_AELRP_s_EPR_ss_PR_001_002d}
\end{alignat}
which, by \eqref{Eq_Lem_AELRP_s_EPR_ss_PR_001_002b}, proves \eqref{Eq_Lem_AELRP_s_EPR_ss_PR_001_001b}.

To obtain the final expression \eqref{Eq_Lem_AELRP_s_EPR_ss_PR_001_001a}, we observe that, for $M:=M_-+M_+$,
the stencils $\tsc{s}_{i,M_-,M_+}$ (corresponding Vandermonde matrix ${^{M+}_{M_-}V}$; \defrefnp{Def_AELRP_s_EPR_ss_PR_002})
and $\tsc{s}_{i-M_-,0,M}$ (corresponding Vandermonde matrix ${^{M}_{0}V}$; \defrefnp{Def_AELRP_s_EPR_ss_PR_002})
correspond by \defrefnp{Def_AELRP_s_EPR_ss_PR_001} to the same set of points
$\{i-M_-,\cdots,i+M_+\}$. Therefore, $\forall\;f\in C[x_{i-M_-},x_{i+M_+}]$, by the uniqueness of the Lagrange interpolating polynomial~\cite{Henrici_1964a}, we have (using the notation of \prprefnp{Prp_AELRP_s_EPR_ss_PR_001})
\begin{alignat}{6}
p_f(x;\tsc{s}_{i,M_-,M_+},\Delta x)=p_f(x;\tsc{s}_{i-M_-,0,M},\Delta x)\quad\begin{array}{c}\forall\;x\in{\mathbb R}\\
                                                                                            \forall\;f\in C[x_{i-M_-},x_{i+M_+}]\\\end{array}
                                                                                                       \label{Eq_Lem_AELRP_s_EPR_ss_PR_001_002e}
\end{alignat}
the only difference being in the choice of the pivot point ($x_i$ for $\tsc{s}_{i,M_-,M_+}$ and $x_{i-M_-}=x_i-M_-\;\Delta x$ for $\tsc{s}_{i-M_-,0,M}$) used
for the representation \eqref{Eq_Prp_AELRP_s_EPR_ss_PR_001_001b} of the interpolating polynomial of $f(x)$ on the nodes $\{i-M_-,\cdots,i+M_+\}$.
By \eqref{Eq_Prp_AELRP_s_EPR_ss_PR_001_001b}, \eqref{Eq_Lem_AELRP_s_EPR_ss_PR_001_002e} reads
\begin{alignat}{6}
\sum_{m=0}^{M} c_{f,\tsc{s}_{i,M_-,M_+},m}\left(\frac{x-x_i}{\Delta x}\right)^m=&\sum_{s=0}^{M} c_{f,\tsc{s}_{i-M_-,0,M},s}\left(\frac{x-x_{i-M_-}}{\Delta x}\right)^s
                                                                                                       \notag\\
                                                                               =&\sum_{s=0}^{M} c_{f,\tsc{s}_{i-M_-,0,M},s}\left(\frac{x-x_i}{\Delta x}+M_-\right)^s
                                                                               = \sum_{s=0}^{M}\sum_{n=0}^{s} c_{f,\tsc{s}_{i-M_-,0,M},s}\binom{s}{n}(M_-)^n\left(\frac{x-x_i}{\Delta x}\right)^{s-n}
                                                                                                       \notag\\
                                                            \stackrel{m:=s-n}{=}&\sum_{m=0}^{M}\left(\sum_{n=0}^{M-m} c_{f,\tsc{s}_{i-M_-,0,M},m+n}\binom{m+n}{n}(M_-)^n\right)\left(\frac{x-x_i}{\Delta x}\right)^m
                                                                       \quad\begin{array}{c}\forall\;x\in{\mathbb R}\\
                                                                                            \forall\;f\in C[x_{i-M_-},x_{i+M_+}]\\\end{array}
                                                                                                       \label{Eq_Lem_AELRP_s_EPR_ss_PR_001_002f}
\end{alignat}
implying 
\begin{alignat}{6}
c_{f,\tsc{s}_{i,M_-,M_+},m}=\sum_{n=0}^{M-m} c_{f,\tsc{s}_{i-M_-,0,M},m+n}\binom{m+n}{n}(M_-)^n\qquad\forall\;m\in\{0,\cdots,M\}
                                                                                                       \label{Eq_Lem_AELRP_s_EPR_ss_PR_001_002g}
\end{alignat}
which by \eqref{Eq_Prp_AELRP_s_EPR_ss_PR_001_005a} gives, $\forall\;f\in C[x_{i-M_-},x_{i+M_+}]$
\begin{alignat}{6}
                          \sum_{\ell=-M_-}^{M_+}({^{M_+}_{M_-}V}^{-1})_{m+1,\ell+M_-+1}\;f_{i+\ell}
                        =&\sum_{n=0}^{M-m}\left(\sum_{s=0}^{M}({^{M}_{0}V}^{-1})_{m+n+1,s+1}\;f_{i-M_-+s}\right)\;\binom{m+n}{n}\;(M_-)^n
                                                                                                       \notag\\
                        =&\sum_{s=0}^{M}\sum_{n=0}^{M-m}({^{M}_{0}V}^{-1})_{m+n+1,s+1}\;f_{i-M_-+s}\;\binom{m+n}{n}\;(M_-)^n
                                                                                                       \notag\\
\stackrel{\ell:=s-M_-}{=}&\sum_{\ell=-M_-}^{M_+}\left(\sum_{n=0}^{M-m}\binom{m+n}{n}\;(M_-)^n\;({^{M}_{0}V}^{-1})_{m+n+1,\ell+M_-+1}\right)\;f_{i+\ell}
                                                                                                       \label{Eq_Lem_AELRP_s_EPR_ss_PR_001_002h}
\end{alignat}
and since $f_{i+\ell}$ ($\ell\in\{-M_-,\cdots,M_+\}$) are linearly independent we have
\begin{alignat}{6}
({^{M_+}_{M_-}V}^{-1})_{m+1,\ell+M_-+1}=\sum_{n=0}^{M-m}\binom{m+n}{n}\;(M_-)^n\;({^{M}_{0}V}^{-1})_{m+n+1,\ell+M_-+1}
                                                                                                                  \quad\begin{array}{l}\forall\;m   \in\{0,\cdots,M\}     \\
                                                                                                                                       \forall\;\ell\in\{-M_-,\cdots,M_+\}\\\end{array}
                                                                                                       \label{Eq_Lem_AELRP_s_EPR_ss_PR_001_002i}
\end{alignat}
which proves \eqref{Eq_Lem_AELRP_s_EPR_ss_PR_001_001a}.

To prove the identities containing $\nu_{M_-,M_+,m,k}$ \eqref{Eq_Lem_AELRP_s_EPR_ss_PR_001_001c}, notice that
the elements of ${^{M_+}_{M_-}V}$~\eqref{Eq_Def_AELRP_s_EPR_ss_PR_002_001} read
\begin{alignat}{6}
({^{M_+}_{M_-}V})_{ij}=(i-1-M_-)^{j-1}\qquad\left.\begin{array}{c} 1 \leq i \leq M+1\\
                                                                   1 \leq j \leq M+1\\\end{array}\right.
                                                                                                       \label{Eq_Lem_AELRP_s_EPR_ss_PR_001_002j}
\end{alignat}
Explicit expression of the elements of the product $({^{M_+}_{M_-}V}^{-1})\cdot({^{M_+}_{M_-}V})=I_{M+1}$ (where $I_{M+1}\in{\mathbb R}^{(M+1)\times(M+1)}$ is the identity matrix)
yields
\begin{alignat}{6}
\delta_{m+1,k+1}=
\left(({^{M_+}_{M_-}V}^{-1})\cdot({^{M_+}_{M_-}V})\right)_{m+1,k+1}= \nu_{M_-,M_+,m,k}
                                                                                 \qquad\left.\begin{array}{c} 0 \leq k \leq M\\
                                                                                                              0 \leq m \leq M\\\end{array}\right.
                                                                                                       \label{Eq_Lem_AELRP_s_EPR_ss_PR_001_002k}
\end{alignat}
and as a consequence~\eqref{Eq_Lem_AELRP_s_EPR_ss_PR_001_001d}.
To prove \eqref{Eq_Lem_AELRP_s_EPR_ss_PR_001_001e}, consider the error \eqref{Eq_Prp_AELRP_s_EPR_ss_AELPR_001_001b} of the polynomial interpolation $p_f(x_i+\xi\Delta x;\tsc{s}_{i,M_-,M_+},\Delta x)$
on the stencil $\tsc{s}_{i,M_-,M_+}$ \prpref{Prp_AELRP_s_EPR_ss_AELPR_001}. By construction, we have
\begin{alignat}{6}
p_f(x_i+\ell\Delta x;\tsc{s}_{i,M_-,M_+},\Delta x)=f_{i+\ell}\stackrel{\eqref{Eq_Prp_AELRP_s_EPR_ss_AELPR_001_001b}}{\Longrightarrow}
E_f(x_i+\ell\Delta x;\tsc{s}_{i,M_-,M_+},\Delta x)=0 \qquad\forall\ell\in\{-M_-,\cdots,M_+\}
                                                                                                       \label{Eq_Lem_AELRP_s_EPR_ss_PR_001_002l}
\end{alignat}
which, using \eqref{Eq_Prp_AELRP_s_EPR_ss_AELPR_001_001e} and \eqref{Eq_Prp_AELRP_s_EPR_ss_AELPR_001_001g} in \eqref{Eq_Prp_AELRP_s_EPR_ss_AELPR_001_001b}, proves \eqref{Eq_Lem_AELRP_s_EPR_ss_PR_001_001e}.\qed
\end{subequations}
\end{proof}
%

%
\begin{proposition}[Lagrange polynomial reconstruction on $\tsc{s}_{i,M_-,M_+}$]
\label{Prp_AELRP_s_EPR_ss_PR_001}
Let
\begin{subequations}
                                                                                                       \label{Eq_Prp_AELRP_s_EPR_ss_PR_001_001}
\begin{alignat}{6}
p_h(x;\tsc{s}_{i,M_-,M_+},\Delta x)&:=&\sum_{m=0}^{M} c_{h,\tsc{s}_{i,M_-,M_+},m}\left(\frac{x-x_i}{\Delta x}\right)^m
                                                                                                       \label{Eq_Prp_AELRP_s_EPR_ss_PR_001_001a}\\
p_f(x;\tsc{s}_{i,M_-,M_+},\Delta x)&:=&\sum_{m=0}^{M} c_{f,\tsc{s}_{i,M_-,M_+},m}\left(\frac{x-x_i}{\Delta x}\right)^m
                                                                                                       \label{Eq_Prp_AELRP_s_EPR_ss_PR_001_001b}
\end{alignat}
be 2 polynomials of degree
\begin{equation}
M:=M_-+M_+
                                                                                                       \label{Eq_Prp_AELRP_s_EPR_ss_PR_001_001c}
\end{equation}
constituting a polynomial \lemref{Lem_AELRP_s_RP_ss_PRP_001} reconstruction pair \defref{Def_AELRP_s_RPERR_ss_RP_001} $p_h=R_{(1;\Delta x)}(p_f)$.
Assume that the polynomial $p_f(x;\tsc{s}_{i,M_-,M_+},\Delta x)$ is obtained by interpolation of the values of $f(x)$ on the
points of the stencil $\tsc{s}_{i,M_-,M_+}$ \defref{Def_AELRP_s_EPR_ss_PR_001}.
Then
\begin{alignat}{6}
p_h(x_i+\xi\Delta x;\tsc{s}_{i,M_-,M_+},\Delta x)&=&\sum_{\ell=-M_-}^{M_+}\alpha_{h,M_-,M_+,\ell}(\xi)f_{i+\ell}
                                                                                                       \label{Eq_Prp_AELRP_s_EPR_ss_PR_001_001d}\\
p_f(x_i+\xi\Delta x;\tsc{s}_{i,M_-,M_+},\Delta x)&=&\sum_{\ell=-M_-}^{M_+}\alpha_{f,M_-,M_+,\ell}(\xi)f_{i+\ell}
                                                                                                       \label{Eq_Prp_AELRP_s_EPR_ss_PR_001_001e}
\end{alignat}
where $\alpha_{h,M_-,M_+,\ell}(\xi)$ and $\alpha_{f,M_-,M_+,\ell}(\xi)$ are polynomials of degree $M$ in
\begin{alignat}{6}
\xi:=\frac{x-x_i}{\Delta x}
                                                                                                       \label{Eq_Prp_AELRP_s_EPR_ss_PR_001_001f}
\end{alignat}
with coefficients depending only on the 3 indices ($M_-,M_+,\ell$)
\begin{alignat}{6}
\alpha_{h,M_-,M_+,\ell}(\xi):=&\sum_{m=0}^{M}\left(\sum_{k=0}^{\lfloor\frac{M-m}{2}\rfloor}\frac{\tau_{2k}(m+2k)!}
                                                                                                {m!              }({^{M_+}_{M_-}V}^{-1})_{m+2k+1,\ell+M_-+1}\right)\xi^m
                                                                                                       \label{Eq_Prp_AELRP_s_EPR_ss_PR_001_001g}\\
\alpha_{f,M_-,M_+,\ell}(\xi):=&\sum_{m=0}^{M}({^{M_+}_{M_-}V}^{-1})_{m+1,\ell+M_-+1}\;\xi^m
                                                                                                       \label{Eq_Prp_AELRP_s_EPR_ss_PR_001_001h}
\end{alignat}
where $({^{M_+}_{M_-}V}^{-1})_{ij}$ are the elements of the inverse Vandermonde matrix on $\tsc{s}_{i,M_-,M_+}$ \lemref{Lem_AELRP_s_EPR_ss_PR_001},
and the numbers $\tau_{2k}$ \tabref{Tab_Lem_AELRP_s_RPERR_ss_D_001_001} are defined by~\eqref{Eq_Thm_AELRP_s_RPERR_ss_GFtaunRPexp_001_001c} and satisfy the recurrence~\eqref{Eq_Lem_AELRP_s_RPERR_ss_D_001_001c}.
\end{subequations}
\end{proposition}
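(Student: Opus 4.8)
The plan is to reduce the statement to two facts already established: the linear-algebraic description of Lagrange interpolation through the inverse Vandermonde matrix on $\tsc{s}_{i,M_-,M_+}$ \lemref{Lem_AELRP_s_EPR_ss_PR_001}, and the coefficient-inversion formula \eqref{Eq_Lem_AELRP_s_RP_ss_PRP_001_001f} of \lemref{Lem_AELRP_s_RP_ss_PRP_001} relating the coefficients of a polynomial reconstruction pair.

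First I would treat $p_f$. Setting $\xi:=(x-x_i)/\Delta x$, the interpolation conditions $p_f(x_i+\ell\Delta x;\tsc{s}_{i,M_-,M_+},\Delta x)=f_{i+\ell}$ for $\ell\in\{-M_-,\dots,M_+\}$ become, by \eqref{Eq_Prp_AELRP_s_EPR_ss_PR_001_001b}, the square linear system $\sum_{m=0}^{M}c_{f,\tsc{s}_{i,M_-,M_+},m}\,\ell^{m}=f_{i+\ell}$, whose coefficient matrix is exactly ${^{M_+}_{M_-}V}$ under the index dictionary ``row $\leftrightarrow\ell+M_-+1$, column $\leftrightarrow m+1$'' (compare the entry formula \eqref{Eq_Lem_AELRP_s_EPR_ss_PR_001_002j}). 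Since ${^{M_+}_{M_-}V}$ is invertible, $c_{f,\tsc{s}_{i,M_-,M_+},m}=\sum_{\ell=-M_-}^{M_+}({^{M_+}_{M_-}V}^{-1})_{m+1,\ell+M_-+1}\,f_{i+\ell}$. Substituting this into \eqref{Eq_Prp_AELRP_s_EPR_ss_PR_001_001b} and interchanging the two finite sums yields \eqref{Eq_Prp_AELRP_s_EPR_ss_PR_001_001e} with $\alpha_{f,M_-,M_+,\ell}(\xi)$ as in \eqref{Eq_Prp_AELRP_s_EPR_ss_PR_001_001h}; these coefficients depend only on $(M_-,M_+,\ell)$ because ${^{M_+}_{M_-}V}^{-1}$ does.

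Next I would treat $p_h$. By hypothesis $p_h=R_{(1;\Delta x)}(p_f)$ is the polynomial reconstruction pair of $p_f$ in the sense of \lemref{Lem_AELRP_s_RP_ss_PRP_001}, expanded about the same pivot $x_i$ with the same scaling $\Delta x$; hence \eqref{Eq_Lem_AELRP_s_RP_ss_PRP_001_001f} gives $c_{h,\tsc{s}_{i,M_-,M_+},m}=\tfrac{1}{m!}\sum_{k=0}^{\lfloor(M-m)/2\rfloor}\tau_{2k}\,(m+2k)!\,c_{f,\tsc{s}_{i,M_-,M_+},m+2k}$ (all indices satisfy $m+2k\le M$, so the $c_{f,\cdot}$ that appear are defined). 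Inserting the expression for $c_{f,\tsc{s}_{i,M_-,M_+},m+2k}$ obtained above, plugging the result into \eqref{Eq_Prp_AELRP_s_EPR_ss_PR_001_001a}, and reordering the finite sums over $(m,k)$ and over $\ell$ to collect the coefficient of $f_{i+\ell}$, I obtain exactly $\alpha_{h,M_-,M_+,\ell}(\xi)$ of \eqref{Eq_Prp_AELRP_s_EPR_ss_PR_001_001g}, a polynomial of degree $M$ in $\xi$ whose coefficients again involve only $({^{M_+}_{M_-}V}^{-1})$ and the $\tau_{2k}$, hence only $(M_-,M_+,\ell)$.

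There is no genuine obstacle, only bookkeeping. The one point that needs care is the translation between the $1,\dots,M+1$ indexing of ${^{M_+}_{M_-}V}^{-1}$ and the natural labels --- the monomial degree $m\in\{0,\dots,M\}$ and the stencil offset $\ell\in\{-M_-,\dots,M_+\}$ --- together with the (routine) justification that the finite double sums may be freely reordered. The degree-$M$ and pivot-independence claims are then immediate: the outer sum is $\sum_{m=0}^{M}$, and neither ${^{M_+}_{M_-}V}^{-1}$ nor $\tau_{2k}$ depends on $i$ or on the sampled values $f_{i+\ell}$.
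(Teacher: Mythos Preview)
Your proposal is correct and follows essentially the same approach as the paper: set up the Vandermonde system from the interpolation conditions, invert it to express $c_{f,\tsc{s}_{i,M_-,M_+},m}$ via $({^{M_+}_{M_-}V}^{-1})$, apply the deconvolution formula \eqref{Eq_Lem_AELRP_s_RP_ss_PRP_001_001f} to obtain $c_{h,\tsc{s}_{i,M_-,M_+},m}$, and then substitute and reorder the finite sums. The paper's argument is identical in structure and detail, including the index bookkeeping you flag.
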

%
%
\begin{proof}
Define
\begin{subequations}
                                                                                                       \label{Eq_Prp_AELRP_s_EPR_ss_PR_001_002}
\begin{alignat}{6}
x_{i+\ell}&:=&x_i+\ell\Delta x&\qquad -M_-\leq \ell\leq M_+
                                                                                                       \label{Eq_Prp_AELRP_s_EPR_ss_PR_001_002a}\\
f_{i+\ell}&:=&f(x_{i+\ell})   &\qquad -M_-\leq \ell\leq M_+
                                                                                                       \label{Eq_Prp_AELRP_s_EPR_ss_PR_001_002b}
\end{alignat}
\end{subequations}
The $M+1$ coefficients $c_{f,(\tsc{s}_{i,M_-,M_+}),m}\;(m=0,\cdots,M)$ are computed by equating the polynomial
$p_f(x_{i+\ell};\tsc{s}_{i,M_-,M_+},\Delta x)$~\eqref{Eq_Prp_AELRP_s_EPR_ss_PR_001_001b} to known values $f_{i+\ell}$
\begin{equation}
\begin{array}{lcl}
f_{i-M_-}&   =  &p_f(x_{i-M-};\tsc{s}_{i,M_-,M_+},\Delta x)\\
         &\vdots&                                          \\
f_{i+M_+}&   =  &p_f(x_{i-M+};\tsc{s}_{i,M_-,M_+},\Delta x)\\\end{array}
                                                                                                       \label{Eq_Prp_AELRP_s_EPR_ss_PR_001_003}
\end{equation}
Expanding~\eqref{Eq_Prp_AELRP_s_EPR_ss_PR_001_003} 
results in an $(M+1)\times(M+1)$ Vandermonde~\defref{Def_AELRP_s_EPR_ss_PR_002} linear system
\begin{alignat}{6}
\underbrace{
\left[\begin{array}{cccc}(-M_-)^0&(-M_-)^1&\cdots&(-M_-)^M\\
                         \vdots  &        &      &        \\
                         (+M_+)^0&(+M_+)^1&\cdots&(+M_+)^M\\\end{array}\right]}_{\displaystyle {^{M_+}_{M_-}V}}\left[\begin{array}{c}c_{f,\tsc{s}_{i,M_-,M_+},0}\\
                                                                                                                                     \vdots                     \\
                                                                                                                                     c_{f,\tsc{s}_{i,M_-,M_+},M}\\\end{array}\right]=\left[\begin{array}{c}f_{i-M_-}\\
                                                                                                                                                                                                           \vdots   \\
                                                                                                                                                                                                           f_{i+M_+}\\\end{array}\right]
                                                                                                       \label{Eq_Prp_AELRP_s_EPR_ss_PR_001_004}
\end{alignat}
Hence~\defref{Def_AELRP_s_EPR_ss_PR_002}
\begin{subequations}
                                                                                                       \label{Eq_Prp_AELRP_s_EPR_ss_PR_001_005}
\begin{alignat}{6}
c_{f,\tsc{s}_{i,M_-,M_+},m}=&\sum_{\ell=-M_-}^{M_+}({^{M_+}_{M_-}V}^{-1})_{m+1,\ell+M_-+1}\;f_{i+\ell}      &\qquad\forall m\in\{0,\cdots,M\}
                                                                                                       \label{Eq_Prp_AELRP_s_EPR_ss_PR_001_005a}\\
c_{h,\tsc{s}_{i,M_-,M_+},m}=&\frac{1}{m!}\sum_{k=0}^{\lfloor\frac{M-m}{2}\rfloor}\tau_{2k}\;c_{f,\tsc{s}_{i,M_-,M_+},m+2k}\;(m+2k)!  &\qquad\forall m\in\{0,\cdots,M\}
                                                                                                       \label{Eq_Prp_AELRP_s_EPR_ss_PR_001_005b}
\end{alignat}
\end{subequations}
where we used the deconvolution formula~\eqref{Eq_Lem_AELRP_s_RP_ss_PRP_001_001f} for computing $c_{h,\tsc{s}_{i,M_-,M_+},m}$.
Injecting~\eqref{Eq_Prp_AELRP_s_EPR_ss_PR_001_005a} into~\eqref{Eq_Prp_AELRP_s_EPR_ss_PR_001_001b} we have
\begin{subequations}
                                                                                                       \label{Eq_Prp_AELRP_s_EPR_ss_PR_001_006}
\begin{alignat}{6}
p_f(x_i+\xi\Delta x;\tsc{s}_{i,M_-,M_+},\Delta x)=\sum_{m=0}^{M}\left(\sum_{\ell=-M_-}^{M_+}({^{M_+}_{M_-}V}^{-1})_{m+1,\ell+M_-+1}\;f_{i+\ell}\right)\xi^m
                                                 =\sum_{\ell=-M_-}^{M_+}\left(\sum_{m=0}^{M}({^{M_+}_{M_-}V}^{-1})_{m+1,\ell+M_-+1}\;\xi^m\right)f_{i+\ell}
                                                                                                       \label{Eq_Prp_AELRP_s_EPR_ss_PR_001_006a}
\end{alignat}
proving~\eqref{Eq_Prp_AELRP_s_EPR_ss_PR_001_001e} and~\eqref{Eq_Prp_AELRP_s_EPR_ss_PR_001_001h}.
Injecting~\eqref{Eq_Prp_AELRP_s_EPR_ss_PR_001_005b} into~\eqref{Eq_Prp_AELRP_s_EPR_ss_PR_001_001a} we have
\begin{alignat}{6}
 p_h(x_i+\xi\Delta x;\tsc{s}_{i,M_-,M_+},\Delta x)=&\sum_{m=0}^{M}\left(\sum_{k=0}^{\lfloor\frac{M-m}{2}\rfloor}\frac{\tau_{2k}(m+2k)!}{m!}c_{f,\tsc{s}_{i,M_-,M_+},m+2k}\right)\;\xi^m
                                                                                                       \notag\\
                                                  =&\sum_{m=0}^{M}\left(\sum_{k=0}^{\lfloor\frac{M-m}{2}\rfloor}\frac{\tau_{2k}(m+2k)!}{m!}
                                                                  \left(\sum_{\ell=-M_-}^{M_+}({^{M_+}_{M_-}V}^{-1})_{m+2k+1,\ell+M_-+1}f_{i+\ell}\right)\right)\;\xi^m
                                                                                                       \notag\\
                                                  =&\sum_{\ell=-M_-}^{M_+}\left(\sum_{m=0}^{M}\left(\sum_{k=0}^{\lfloor\frac{M-m}{2}\rfloor}
                                                                                \frac{\tau_{2k}(m+2k)!}{m!}({^{M_+}_{M_-}V}^{-1})_{m+2k+1,\ell+M_-+1}\right)\;\xi^m\right) f_{i+\ell}
                                                                                                       \label{Eq_Prp_AELRP_s_EPR_ss_PR_001_006b}
\end{alignat}
\end{subequations}
proving~\eqref{Eq_Prp_AELRP_s_EPR_ss_PR_001_001d} and~\eqref{Eq_Prp_AELRP_s_EPR_ss_PR_001_001g}.\qed
\end{proof}
%

%
%
%
%
%
\subsection{Approximation error of Lagrange polynomial reconstruction}\label{AELRP_s_EPR_ss_AELPR}
%
%
%
%
%

Of course the accuracy relations for the approximation of $f(x)$ are well-known~\cite{Henrici_1964a},
but this section (\S\ref{AELRP_s_EPR_ss_AELPR}) is concerned with the accuracy of the approximation of $h(x)$,
using Lagrange polynomial reconstruction based on the knowledge of the values of $f(x)$ on an arbitrary stencil defined on a homogeneous grid (\S\ref{AELRP_s_EPR_ss_PR}).
%
\begin{proposition}[Error of Lagrange polynomial reconstruction on $\tsc{s}_{i,M_-,M_+}$]
\label{Prp_AELRP_s_EPR_ss_AELPR_001}
Let $p_f(x;\tsc{s}_{i,M_-,M_+},\Delta x)$ and $p_h(x;\tsc{s}_{i,M_-,M_+},\Delta x)$ be a polynomial \lemref{Lem_AELRP_s_RP_ss_PRP_001} reconstruction pair \defref{Def_AELRP_s_RPERR_ss_RP_001} $p_h=R_{(1;\Delta x)}(p_f)$,
satisfying the conditions of \prprefnp{Prp_AELRP_s_EPR_ss_PR_001}.
Then, $p_f(x;\tsc{s}_{i,M_-,M_+},\Delta x)$ approximates $f(x)$ to $O(\Delta x^{M+1})$, and $p_h(x;\tsc{s}_{i,M_-,M_+},\Delta x)$ approximates $h(x)$ to $O(\Delta x^{M+1})$
\begin{subequations}
                                                                                                       \label{Eq_Prp_AELRP_s_EPR_ss_AELPR_001_001}
\begin{alignat}{6}
p_h(x;\tsc{s}_{i,M_-,M_+},\Delta x)&=&h(x)&+&E_h(x;\tsc{s}_{i,M_-,M_+},\Delta x)&=&h(x)&+&O(\Delta x^{M+1})
                                                                                                       \label{Eq_Prp_AELRP_s_EPR_ss_AELPR_001_001a}\\
p_f(x;\tsc{s}_{i,M_-,M_+},\Delta x)&=&f(x)&+&E_f(x;\tsc{s}_{i,M_-,M_+},\Delta x)&=&f(x)&+&O(\Delta x^{M+1})
                                                                                                       \label{Eq_Prp_AELRP_s_EPR_ss_AELPR_001_001b}
\end{alignat}
where the approximation errors constitute a reconstruction pair $E_h=R_{(1;\Delta x)}(E_f)$~\defref{Def_AELRP_s_RPERR_ss_RP_001} and, $\forall N_\tsc{tj}\geq M+1$, are given by
\textup{(}assuming $f$ and $h$ are of class $C^{N_\tsc{tj}+1}$\textup{)}
\begin{alignat}{6}
E_h(x_i+\xi\Delta x;\tsc{s}_{i,M_-,M_+},\Delta x)=&\sum_{s=M+1}^{N_\tsc{tj}}\mu_{h,M_-,M_+,s}(\xi)\Delta x^s f_i^{(s)}+O(\Delta x^{N_\tsc{tj}+1})
                                                                                                       \label{Eq_Prp_AELRP_s_EPR_ss_AELPR_001_001c}\\
                                                 =&\sum_{s=M+1}^{N_\tsc{tj}}\left(\sum_{\ell=0}^{\lfloor\frac{s-M-1}{2}\rfloor}\frac{\mu_{h,M_-,M_+,s-2\ell}(\xi)}
                                                                                                                                    {2^{2\ell}\;(2\ell+1)!           }\right)\Delta x^s h_i^{(s)}+O(\Delta x^{N_\tsc{tj}+1})
                                                                                                       \label{Eq_Prp_AELRP_s_EPR_ss_AELPR_001_001d}\\
E_f(x_i+\xi\Delta x;\tsc{s}_{i,M_-,M_+},\Delta x)=&\sum_{s=M+1}^{N_\tsc{tj}}\mu_{f,M_-,M_+,s}(\xi)\Delta x^s f_i^{(s)}+O(\Delta x^{N_\tsc{tj}+1})
                                                                                                       \label{Eq_Prp_AELRP_s_EPR_ss_AELPR_001_001e}
\end{alignat}
where $\mu_{h,M_-,M_+,s}(\xi)$ and $\mu_{f,M_-,M_+,s}(\xi)$ are polynomials of degree $s$ in $\xi$~\eqref{Eq_Prp_AELRP_s_EPR_ss_PR_001_001f}
\begin{alignat}{6}
\mu_{h,M_-,M_+,s}(\xi):=&\sum_{k=0}^{\lfloor\frac{s}{2}\rfloor}\frac{-\tau_{2k}}
                                                                        {(s-2k)!   }\xi^{s-2k}
                           + \sum_{m=0}^{M}\left(\sum_{k=0}^{\lfloor\frac{M-m}{2}\rfloor}\tau_{2k}\nu_{M_-,M_+,m+2k,s}\frac{(m+2k)!}{s!\;m!}\right)\xi^m
                                                                                                       \label{Eq_Prp_AELRP_s_EPR_ss_AELPR_001_001f}\\
\mu_{f,M_-,M_+,s}(\xi):=&\frac{ 1}
                                  {s!}\left(-\xi^s+\sum_{m=0}^{M}\nu_{M_-,M_+,m,s}\xi^m\right)
                                                                                                       \label{Eq_Prp_AELRP_s_EPR_ss_AELPR_001_001g}
\end{alignat}
where $\nu_{M_-,M_+,m,s}$ are defined by \eqref{Eq_Lem_AELRP_s_EPR_ss_PR_001_001c},
and the numbers $\tau_{2k}$ \tabref{Tab_Lem_AELRP_s_RPERR_ss_D_001_001} are defined by~\eqref{Eq_Thm_AELRP_s_RPERR_ss_GFtaunRPexp_001_001c} and satisfy the recurrence~\eqref{Eq_Lem_AELRP_s_RPERR_ss_D_001_001c}.
\end{subequations}
\end{proposition}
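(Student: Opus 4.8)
The plan is as follows. First, $E_h=R_{(1;\Delta x)}(E_f)$ is immediate from linearity: by the hypotheses of \prprefnp{Prp_AELRP_s_EPR_ss_PR_001} we have $h=R_{(1;\Delta x)}(f)$ and $p_h=R_{(1;\Delta x)}(p_f)$, so $f=R^{-1}_{(1;\Delta x)}(h)$ and $p_f=R^{-1}_{(1;\Delta x)}(p_h)$; since $R^{-1}_{(1;\Delta x)}$ is the integral operator \eqref{Eq_Def_AELRP_s_RPERR_ss_RP_001_001a}, hence linear, $E_f=p_f-f=R^{-1}_{(1;\Delta x)}(p_h-h)=R^{-1}_{(1;\Delta x)}(E_h)$, i.e. $E_h=R_{(1;\Delta x)}(E_f)$. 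It then remains to establish \eqref{Eq_Prp_AELRP_s_EPR_ss_AELPR_001_001c}--\eqref{Eq_Prp_AELRP_s_EPR_ss_AELPR_001_001g}, which I would get by Taylor-expanding the nodal values in the explicit representations of \prprefnp{Prp_AELRP_s_EPR_ss_PR_001}.

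\textbf{The error $E_f$.} Start from $p_f(x_i+\xi\Delta x;\tsc{s}_{i,M_-,M_+},\Delta x)=\sum_{\ell=-M_-}^{M_+}\alpha_{f,M_-,M_+,\ell}(\xi)f_{i+\ell}$ \eqref{Eq_Prp_AELRP_s_EPR_ss_PR_001_001e}, replace each $f_{i+\ell}$ by its Taylor jet of order $N_\tsc{tj}$ about $x_i$ (legitimate since $f\in C^{N_\tsc{tj}+1}$), interchange the two finite sums, and use \eqref{Eq_Prp_AELRP_s_EPR_ss_PR_001_001h} together with the definition \eqref{Eq_Lem_AELRP_s_EPR_ss_PR_001_001c} of $\nu_{M_-,M_+,m,s}$: the coefficient of $\Delta x^s f_i^{(s)}$ in $p_f(x_i+\xi\Delta x;\cdot)$ comes out as $\tfrac{1}{s!}\sum_{m=0}^M\nu_{M_-,M_+,m,s}\xi^m$. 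For $0\le s\le M$ the identity \eqref{Eq_Lem_AELRP_s_EPR_ss_PR_001_001d} gives $\nu_{M_-,M_+,m,s}=\delta_{ms}$, so this coefficient is $\xi^s/s!$ and the $s\le M$ part of $p_f$ reproduces exactly the Taylor jet of $f(x_i+\xi\Delta x)$; subtracting $f(x_i+\xi\Delta x)=\sum_{s=0}^{N_\tsc{tj}}\tfrac{\xi^s}{s!}\Delta x^s f_i^{(s)}+O(\Delta x^{N_\tsc{tj}+1})$ leaves \eqref{Eq_Prp_AELRP_s_EPR_ss_AELPR_001_001e} with $\mu_{f,M_-,M_+,s}$ exactly \eqref{Eq_Prp_AELRP_s_EPR_ss_AELPR_001_001g}. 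In particular $E_f=O(\Delta x^{M+1})$ and $\mu_{f,M_-,M_+,s}$ has degree $s$ in $\xi$ (the $-\xi^s/s!$ term).

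\textbf{The error $E_h$.} Proceed identically from $p_h(x_i+\xi\Delta x;\cdot)=\sum_\ell\alpha_{h,M_-,M_+,\ell}(\xi)f_{i+\ell}$ \eqref{Eq_Prp_AELRP_s_EPR_ss_PR_001_001d} with $\alpha_{h,M_-,M_+,\ell}$ as in \eqref{Eq_Prp_AELRP_s_EPR_ss_PR_001_001g}: the coefficient of $\Delta x^s f_i^{(s)}$ in $p_h(x_i+\xi\Delta x;\cdot)$ equals $\tfrac{1}{s!}\sum_{m=0}^M\bigl(\sum_{k=0}^{\lfloor(M-m)/2\rfloor}\tfrac{\tau_{2k}(m+2k)!}{m!}\,\nu_{M_-,M_+,m+2k,s}\bigr)\xi^m$. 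For $0\le s\le M$ one has $m+2k\le M$, so \eqref{Eq_Lem_AELRP_s_EPR_ss_PR_001_001d} again collapses $\nu_{M_-,M_+,m+2k,s}$ to $\delta_{m+2k,s}$; the inner sum reduces to the single term $2k=s-m$ (contributions with $s-m$ odd being absent, consistent with $\tau_{2k+1}=0$ from \thmrefnp{Thm_AELRP_s_RPERR_ss_GFtaunRPexp_001}), and reindexing $m=s-2k$ turns the coefficient into $\sum_{k=0}^{\lfloor s/2\rfloor}\tfrac{\tau_{2k}}{(s-2k)!}\xi^{s-2k}$ --- precisely the coefficient of $\Delta x^s f_i^{(s)}$ in the Taylor jet of $h(x_i+\xi\Delta x)$ furnished by \eqref{Eq_Crl_AELRP_s_RPERR_ss_D_001_001} of \crlrefnp{Crl_AELRP_s_RPERR_ss_D_001}. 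Hence the $s\le M$ contributions to $p_h-h$ cancel, $E_h=O(\Delta x^{M+1})$, and for $M+1\le s\le N_\tsc{tj}$ the coefficient of $\Delta x^s f_i^{(s)}$ in $p_h-h$ is exactly $\mu_{h,M_-,M_+,s}(\xi)$ of \eqref{Eq_Prp_AELRP_s_EPR_ss_AELPR_001_001f} (the first sum there being $-1$ times the jet coefficient of $h$), of degree $s$ because of the $-\tau_0\xi^s/s!$ term; this proves \eqref{Eq_Prp_AELRP_s_EPR_ss_AELPR_001_001c}. Finally, \eqref{Eq_Prp_AELRP_s_EPR_ss_AELPR_001_001d} follows by substituting the exact deconvolution relation \eqref{Eq_Lem_AELRP_s_RPERR_ss_D_001_001a}, $f_i^{(s)}=\sum_{\ell\ge0}\tfrac{\Delta x^{2\ell}}{2^{2\ell}(2\ell+1)!}h_i^{(s+2\ell)}+O(\Delta x^{\cdot})$, into \eqref{Eq_Prp_AELRP_s_EPR_ss_AELPR_001_001c}, regrouping by the total power $s'=s+2\ell$ of $\Delta x$ (so $0\le\ell\le\lfloor(s'-M-1)/2\rfloor$ and $M+1\le s'\le N_\tsc{tj}$) and renaming $s'\to s$; since only even shifts enter, the remainder stays $O(\Delta x^{N_\tsc{tj}+1})$.

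\textbf{Anticipated main obstacle.} The delicate point is the $E_h$ cancellation at orders $s\le M$: unlike the $E_f$ case, where $\nu=\delta$ directly reproduces the Taylor jet of $f$, here one must verify that the jet coefficients of $h$ \emph{implicitly} carried by the Lagrange reconstructing polynomial agree, through degree $M$, with the \emph{exact} deconvolution coefficients of \crlrefnp{Crl_AELRP_s_RPERR_ss_D_001}. Carrying this out cleanly requires tracking the triple sum over $(m,k,\ell)$, justifying the interchanges of summation order (via the index-manipulation identities of \ref{AELRP_s_AppendixA}), invoking $\tau_{2k+1}=0$, and confirming that the finite-stencil truncation of the $\tau$-series behaves exactly as the full series up to degree $M$ (the mechanism already exploited in \lemrefnp{Lem_AELRP_s_RP_ss_MIPPRPLem_002} under $N\le\lfloor M/2\rfloor+1$); the only other care needed is the $O$-term bookkeeping in the final deconvolution substitution.
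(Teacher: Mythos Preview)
Your proposal is correct and follows essentially the same approach as the paper: Taylor-expand the nodal values $f_{i+\ell}$, use the Vandermonde identity $\nu_{M_-,M_+,m,s}=\delta_{ms}$ for $s\le M$ to collapse the low-order terms, compare with the Taylor jets of $f$ and of $h$ (the latter via \crlrefnp{Crl_AELRP_s_RPERR_ss_D_001}), and finally deconvolve via \eqref{Eq_Lem_AELRP_s_RPERR_ss_D_001_001a} to pass from $f_i^{(s)}$ to $h_i^{(s)}$. The only cosmetic difference is that the paper routes the computation through the coefficients $c_{f,m}$ and $c_{h,m}$ (equations \eqref{Eq_Prp_AELRP_s_EPR_ss_AELPR_001_002}--\eqref{Eq_Prp_AELRP_s_EPR_ss_AELPR_001_004}) before assembling $p_f,p_h$, whereas you work directly with the $\alpha_{f},\alpha_{h}$ representations; the underlying sums and index manipulations are identical, and your explicit remark that $E_h=R_{(1;\Delta x)}(E_f)$ follows from linearity of $R^{-1}_{(1;\Delta x)}$ is a point the paper states but does not separately argue.
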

%
%
\begin{proof}
To prove~\eqref{Eq_Prp_AELRP_s_EPR_ss_AELPR_001_001b} we start by Taylor-expanding $f_{i+\ell}$ in~\eqref{Eq_Prp_AELRP_s_EPR_ss_PR_001_005a}, and using~\eqref{Eq_Lem_AELRP_s_EPR_ss_PR_001_001d}
\begin{alignat}{6}
c_{f,\tsc{s}_{i,M_-,M_+},m}=&\sum_{\ell=-M_-}^{M_+}({^{M_+}_{M_-}V}^{-1})_{m+1,\ell+M_-+1}\left(\sum_{s=0}^{N_\tsc{tj}}\frac{\ell^s\Delta x^s f^{(s)}_i}{s!}+O(\Delta x^{N_\tsc{tj}+1})\right)
                                                                                                       \notag\\
                           =&\sum_{s=0}^{N_\tsc{tj}}\left(\sum_{\ell=-M_-}^{M_+}\left(({^{M_+}_{M_-}V}^{-1})_{m+1,\ell+M_-+1}\;\ell^s\right)\right)\frac{\Delta x^s f^{(s)}_i}{s!}+O(\Delta x^{N_\tsc{tj}+1})
                                                                                                       \notag\\
                           =&\sum_{s=0}^{N_\tsc{tj}}\nu_{M_-,M_+,m,s}\frac{\Delta x^s f^{(s)}_i}{s!}+O(\Delta x^{N_\tsc{tj}+1})
                           = \sum_{s=0}^{M}\delta_{ms}\frac{\Delta x^s f^{(s)}_i}{s!}+\sum_{s=M+1}^{N_\tsc{tj}}\nu_{M_-,M_+,m,s}\frac{\Delta x^s f^{(s)}_i}{s!}+O(\Delta x^{N_\tsc{tj}+1})
                                                                                                       \notag\\
                           =&\frac{\Delta x^m f^{(m)}_i}{m!}+\sum_{s=M+1}^{N_\tsc{tj}}\nu_{M_-,M_+,m,s}\frac{\Delta x^s f^{(s)}_i}{s!}+O(\Delta x^{N_\tsc{tj}+1})
                                                                                                       \label{Eq_Prp_AELRP_s_EPR_ss_AELPR_001_002}
\end{alignat}
Injecting~\eqref{Eq_Prp_AELRP_s_EPR_ss_AELPR_001_002} into~\eqref{Eq_Prp_AELRP_s_EPR_ss_PR_001_001b}, and replacing $f(x_i+\xi\Delta x)$ by its Taylor-polynomial, we have
\begin{alignat}{6}
E_f(x_i+\xi\Delta x;\tsc{s}_{i,M_-,M_+},\Delta x)=&p_f(x_i+\xi\Delta x;\tsc{s}_{i,M_-,M_+},\Delta x)-f(x_i+\xi\Delta x)
                                                                                                       \notag\\
=&\sum_{m=0}^{M}\left(\frac{\Delta x^m f^{(m)}_i}{m!}+\sum_{s=M+1}^{N_\tsc{tj}}\nu_{M_-,M_+,m,s}\frac{\Delta x^s f^{(s)}_i}{s!}+O(\Delta x^{N_\tsc{tj}+1})\right)\xi^m-f(x_i+\xi\Delta x)
                                                                                                       \notag\\
=&\left(\sum_{m=0}^{M}\frac{\Delta x^m f^{(m)}_i}{m!}\xi^m-f(x_i+\xi\Delta x)\right)+\sum_{m=0}^{M}\sum_{s=M+1}^{N_\tsc{tj}}\nu_{M_-,M_+,m,s}\frac{\Delta x^s f^{(s)}_i}{s!}\xi^m+O(\Delta x^{N_\tsc{tj}+1})
                                                                                                       \notag\\
=&\sum_{m=M+1}^{N_\tsc{tj}}\frac{-\Delta x^m f^{(m)}_i}{m!}\xi^m
+\sum_{m=0}^{M}\sum_{s=M+1}^{N_\tsc{tj}}\nu_{M_-,M_+,m,s}\frac{\Delta x^s f^{(s)}_i}{s!}\xi^m+O(\Delta x^{N_\tsc{tj}+1})
                                                                                                       \notag\\
=&\sum_{s=M+1}^{N_\tsc{tj}}\frac{-\Delta x^s f^{(s)}_i}{s!}\xi^s
+\sum_{s=M+1}^{N_\tsc{tj}}\left(\sum_{m=0}^{M}\nu_{M_-,M_+,m,s}\;\xi^m\right)\frac{\Delta x^s f^{(s)}_i}{s!}+O(\Delta x^{N_\tsc{tj}+1})
                                                                                                       \label{Eq_Prp_AELRP_s_EPR_ss_AELPR_001_003}
\end{alignat}
proving~\eqref{Eq_Prp_AELRP_s_EPR_ss_AELPR_001_001b}, \eqref{Eq_Prp_AELRP_s_EPR_ss_AELPR_001_001e} and~\eqref{Eq_Prp_AELRP_s_EPR_ss_AELPR_001_001g}.

To prove~\eqref{Eq_Prp_AELRP_s_EPR_ss_AELPR_001_001a} we use the expression~\eqref{Eq_Prp_AELRP_s_EPR_ss_AELPR_001_002} for $c_{f,\tsc{s}_{i,M_-,M_+},m}$
in~\eqref{Eq_Prp_AELRP_s_EPR_ss_PR_001_005b} to obtain
\begin{alignat}{6}
c_{h,\tsc{s}_{i,M_-,M_+},m}=&
  \frac{1}{m!}\sum_{k=0}^{\lfloor\frac{M-m}{2}\rfloor}\tau_{2k}\;(m+2k)!\left(\frac{\Delta x^{m+2k} f^{(m+2k)}_i}{(m+2k)!}
 +\sum_{s=M+1}^{N_\tsc{tj}}\nu_{M_-,M_+,m+2k,s}\frac{\Delta x^s f^{(s)}_i}{s!})\right)
+O(\Delta x^{N_\tsc{tj}+1})
                                                                                                       \notag\\
=&\sum_{k=0}^{\lfloor\frac{M-m}{2}\rfloor}\tau_{2k}\;\frac{\Delta x^{m+2k} f^{(m+2k)}_i}{m!}
+\sum_{k=0}^{\lfloor\frac{M-m}{2}\rfloor}\frac{\tau_{2k}(m+2k)!}{m!}\left(\sum_{s=M+1}^{N_\tsc{tj}}\nu_{M_-,M_+,m+2k,s}\frac{\Delta x^s f^{(s)}_i}{s!}\right)
+O(\Delta x^{N_\tsc{tj}+1})
                                                                                                       \label{Eq_Prp_AELRP_s_EPR_ss_AELPR_001_004}
\end{alignat}
Injecting~\eqref{Eq_Prp_AELRP_s_EPR_ss_AELPR_001_004} into~\eqref{Eq_Prp_AELRP_s_EPR_ss_PR_001_001a},
and replacing $h(x_i+\xi\Delta x)$ by its Taylor-polynomial~\eqref{Eq_Crl_AELRP_s_RPERR_ss_D_001_001},
we have\footnote{\label{ff_Prp_AELRP_s_EPR_ss_AELPR_001_001}
                 $\displaystyle
                  h(x+\xi\Delta x)\stackrel{\eqref{Eq_Crl_AELRP_s_RPERR_ss_D_001_001}}{=}
                                   \sum_{s=0}^{N_\tsc{tj}}\left(\sum_{\ell=0}^{\lfloor\frac{s}{2}\rfloor}\frac{\tau_{2\ell}\; \xi^{s-2\ell}}
                                                                                                              {(s-2\ell)!                  }\right)\Delta x^s\;f^{(s)}(x)+O(\Delta x^{N_\tsc{tj}+1})
                                  \stackrel{m:=s-2\ell}{=}
                                  \sum_{m=0}^{N_\tsc{tj}}\sum_{\ell=0}^{\lfloor\frac{N_\tsc{tj}-m}{2}\rfloor}\frac{\tau_{2\ell}\;\Delta x^{m+2\ell}\;f^{(m+2\ell)}(x)}
                                                                                                                  {m!                                                }\xi^m+O(\Delta x^{N_\tsc{tj}+1})
                 $
                }
\begin{subequations}
                                                                                                       \label{Eq_Prp_AELRP_s_EPR_ss_AELPR_001_005}
\begin{alignat}{6}
E_h(x_i+\xi\Delta x;\tsc{s}_{i,M_-,M_+},\Delta x)=p_h(x_i+\xi\Delta x;\tsc{s}_{i,M_-,M_+},\Delta x)-h(x_i+\xi\Delta x)
= \sum_{m=0}^{M}\sum_{k=0}^{\lfloor\frac{M-m}{2}\rfloor}\tau_{2k}\;\frac{\Delta x^{m+2k} f^{(m+2k)}_i}{m!}\xi^m
                                                                                                       \notag\\
+\sum_{m=0}^{M}\sum_{k=0}^{\lfloor\frac{M-m}{2}\rfloor}\sum_{s=M+1}^{N_\tsc{tj}}\frac{\tau_{2k}(m+2k)!}{m!}\nu_{M_-,M_+,m+2k,s}\frac{\Delta x^s f^{(s)}_i}{s!}\xi^m
-\sum_{m=0}^{N_\tsc{tj}}\sum_{k=0}^{\lfloor\frac{N_\tsc{tj}-m}{2}\rfloor}\frac{\tau_{2k}\;\Delta x^{m+2k}\;f^{(m+2k)}_i}
                                                                              {m!                                      }\xi^m+O(\Delta x^{N_\tsc{tj}+1})
                                                                                                      \label{Eq_Prp_AELRP_s_EPR_ss_AELPR_001_005a}
\end{alignat}
which simplifies to
\begin{alignat}{6}
E_h(x_i+\xi\Delta x;\tsc{s}_{i,M_-,M_+},\Delta x)
= \sum_{m=0}^{M}\sum_{k=0}^{\lfloor\frac{M-m}{2}\rfloor}\sum_{s=M+1}^{N_\tsc{tj}} \frac{\tau_{2k}(m+2k)!}
                                                                                       {              m!}\nu_{M_-,M_+,m+2k,s}
                                                                                                  \frac{\Delta x^{s}f_i^{(s)}}{s!}\xi^m
                                                                                                       \notag\\
+ \sum_{m=0}^{M}\;\sum_{k=\lfloor\frac{M-m}{2}\rfloor+1}^{\lfloor\frac{N_\tsc{tj}-m}{2}\rfloor}
                                  \frac{-\tau_{2k}}
                                       {        m!}\Delta x^{m+2k}f_i^{(m+2k)}\xi^m
+ \sum_{m=M+1}^{N_\tsc{tj}}\sum_{k=0}^{\lfloor\frac{N_\tsc{tj}-m}{2}\rfloor}
                                         \frac{-\tau_{2k}}
                                              {        m!}\Delta x^{m+2k}f_i^{(m+2k)}\xi^m+O(\Delta x^{N_\tsc{tj}+1})
                                                                                                      \label{Eq_Prp_AELRP_s_EPR_ss_AELPR_001_005b}
\end{alignat}

Using~\eqref{Eq_AELRP_s_AppendixA_003} and \eqref{Eq_AELRP_s_AppendixA_002}, \eqref{Eq_Prp_AELRP_s_EPR_ss_AELPR_001_005b} reads (the summation indices on line 1 remaining unchanged)
\begin{alignat}{6}
 E_h(x_i+\xi\Delta x;\tsc{s}_{i,M_-,M_+},\Delta x)
= \sum_{s=M+1}^{N_\tsc{tj}}\left(\sum_{m=0}^{M}\left(\sum_{k=0}^{\lfloor\frac{M-m}{2}\rfloor}\frac{\tau_{2k}(m+2k)!}
                                                                                                  {s!\; m!         }\nu_{M_-,M_+,m+2k,s}\right)\xi^m\right)\Delta x^{s}f_i^{(s)}
                                                                                                       \notag\\
+ \sum_{s=M+1}^{N_\tsc{tj}}\left(\sum_{k=\lceil\frac{s-M}{2}\rceil}^{\lfloor\frac{s}{2}\rfloor}\frac{-\tau_{2k}}
                                                                                                     {(s-2k)!   }\xi^{s-2k}\right)\Delta x^{s}f_i^{(s)}
+ \sum_{s=M+1}^{N_\tsc{tj}}\left(\sum_{k=0}^{\lceil\frac{s-M}{2}\rceil-1}\frac{-\tau_{2k}}
                                                                              {   (s-2k)!}\xi^{s-2k}\right)\Delta x^{s}f_i^{(s)}+O(\Delta x^{N_\tsc{tj}+1})
                                                                                                       \label{Eq_Prp_AELRP_s_EPR_ss_AELPR_001_005c}
\end{alignat}
and defining $\mu_{h,M_-,M_+,s}(\xi)$ by~\eqref{Eq_Prp_AELRP_s_EPR_ss_AELPR_001_001f} we obtain~\eqref{Eq_Prp_AELRP_s_EPR_ss_AELPR_001_001a} and \eqref{Eq_Prp_AELRP_s_EPR_ss_AELPR_001_001c}.

Finally, using~\eqref{Eq_Lem_AELRP_s_RPERR_ss_D_001_001a} in~\eqref{Eq_Prp_AELRP_s_EPR_ss_AELPR_001_001c}
\begin{alignat}{6}
 E_h(x_i+\xi\Delta x;\tsc{s}_{i,M_-,M_+},\Delta x)=&\sum_{s=M+1}^{N_\tsc{tj}}\mu_{h,M_-,M_+,s}(\xi)\sum_{\ell=0}^{\lfloor\frac{N_\tsc{tj}-s}{2}\rfloor}\frac{\Delta x^{s+2\ell}h_i^{(s+2\ell)}}
                                                                                                                                                                {2^{2\ell}\;(2\ell+1)!            }+O(\Delta x^{N_\tsc{tj}+1})
                                                                                                       \notag\\
                                                  =&\sum_{s=M+1}^{N_\tsc{tj}}\sum_{\ell=0}^{\lfloor\frac{N_\tsc{tj}-s}{2}\rfloor}\frac{\mu_{h,M_-,M_+,s}(\xi)}
                                                                                                                                      {2^{2\ell}\;(2\ell+1)!     }\Delta x^{s+2\ell}h_i^{(s+2\ell)}+O(\Delta x^{N_\tsc{tj}+1})
                                                                                                       \label{Eq_Prp_AELRP_s_PRBE_ss_PROA_001_005d}
\end{alignat}
\end{subequations}
which, by \eqref{Eq_AELRP_s_AppendixA_003} and \eqref{Eq_AELRP_s_AppendixA_002}, proves~\eqref{Eq_Prp_AELRP_s_EPR_ss_AELPR_001_001d}.\qed
\end{proof}
%
%
\begin{proposition}[Approximation error of Lagrange polynomial reconstruction on $\tsc{s}_{i,M_-,M_+}$]
\label{Prp_AELRP_s_EPR_ss_AELPR_002}
Asssume the conditions and definitions of \prprefnp{Prp_AELRP_s_EPR_ss_AELPR_001}. Then
\begin{subequations}
                                                                                                       \label{Eq_Prp_AELRP_s_EPR_ss_AELPR_002_001}
\begin{alignat}{6}
E_h(x_i+\xi\Delta x;\tsc{s}_{i,M_-,M_+},\Delta x)=&\sum_{n=M+1}^{N_\tsc{tj}}\lambda_{h,M_-,M_+,n}(\xi)\;\Delta x^n\;h^{(n)}(x_i+\xi\Delta x)+O(\Delta x^{N_\tsc{tj}+1})
                                                                                                       \label{Eq_Prp_AELRP_s_EPR_ss_AELPR_002_001a}\\
E_f(x_i+\xi\Delta x;\tsc{s}_{i,M_-,M_+},\Delta x)=&\sum_{n=M+1}^{N_\tsc{tj}}\lambda_{f,M_-,M_+,n}(\xi)\;\Delta x^n\;f^{(n)}(x_i+\xi\Delta x)+O(\Delta x^{N_\tsc{tj}+1})
                                                                                                       \label{Eq_Prp_AELRP_s_EPR_ss_AELPR_002_001b}
\end{alignat}
where $\lambda_{h,M_-,M_+,n}(\xi)$ and $\lambda_{f,M_-,M_+,n}(\xi)$ are polynomials of degree $n$ in $\xi$~\eqref{Eq_Prp_AELRP_s_EPR_ss_PR_001_001f}
\begin{alignat}{6}
\lambda_{h,M_-,M_+,n}(\xi):=&\sum_{\ell=0}^{n-M-1}\mu_{h,M_-,M_+,n-\ell}(\xi)
                                                  \dfrac{(-1)^{\ell+1}}
                                                        {(\ell+1)!    }\left((\xi-\tfrac{1}{2})^{\ell+1}
                                                                            -(\xi+\tfrac{1}{2})^{\ell+1}\right)
                                                                                                       \label{Eq_Prp_AELRP_s_EPR_ss_AELPR_002_001c}\\
\lambda_{f,M_-,M_+,n}(\xi):=&\sum_{\ell=0}^{n-M-1}\dfrac{(-\xi)^\ell}
                                                        {\ell!      }\mu_{f,M_-,M_+,n-\ell}(\xi)
                                                                                                       \label{Eq_Prp_AELRP_s_EPR_ss_AELPR_002_001d}
\end{alignat}
where $\mu_{h,M_-,M_+,n}(\xi)$ is defined by \eqref{Eq_Prp_AELRP_s_EPR_ss_AELPR_001_001f} and $\mu_{f,M_-,M_+,n}(\xi)$ is defined by \eqref{Eq_Prp_AELRP_s_EPR_ss_AELPR_001_001g}.
\end{subequations}
\end{proposition}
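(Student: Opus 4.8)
The plan is to obtain the expansions~\eqref{Eq_Prp_AELRP_s_EPR_ss_AELPR_002_001a}--\eqref{Eq_Prp_AELRP_s_EPR_ss_AELPR_002_001b} from the expansions of~\prprefnp{Prp_AELRP_s_EPR_ss_AELPR_001}, which are written in terms of the derivatives $f_i^{(s)}=f^{(s)}(x_i)$ at the pivot point, by performing in each case a single Taylor shift from $x_i$ to the running point $x_i+\xi\Delta x$, followed by a re-indexing of the resulting double sum. Throughout, $f$ and $h$ are of class $C^{N_\tsc{tj}+1}$, as assumed, and $E_h=R_{(1;\Delta x)}(E_f)$ is already recorded in~\prprefnp{Prp_AELRP_s_EPR_ss_AELPR_001}, so only the two coefficient families $\lambda_{f,M_-,M_+,n}$, $\lambda_{h,M_-,M_+,n}$ need to be identified.

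For the $f$-part I would start from~\eqref{Eq_Prp_AELRP_s_EPR_ss_AELPR_001_001e} and Taylor-expand each $f^{(s)}(x_i)$ about $x_i+\xi\Delta x$, i.e.\ $f^{(s)}(x_i)=\sum_{\ell=0}^{N_\tsc{tj}-s}\frac{(-\xi)^\ell}{\ell!}\Delta x^\ell f^{(s+\ell)}(x_i+\xi\Delta x)+O(\Delta x^{N_\tsc{tj}-s+1})$. Substituting this into~\eqref{Eq_Prp_AELRP_s_EPR_ss_AELPR_001_001e}, so that each remainder --- once multiplied by $\mu_{f,M_-,M_+,s}(\xi)\Delta x^s$ --- becomes $O(\Delta x^{N_\tsc{tj}+1})$, and collecting the coefficient of $\Delta x^n f^{(n)}(x_i+\xi\Delta x)$ by setting $n:=s+\ell$ (using the index-exchange identities~\eqref{Eq_AELRP_s_AppendixA_003} and~\eqref{Eq_AELRP_s_AppendixA_002}, with $s$ ranging over $M+1\le s\le n$, equivalently $\ell$ over $0\le\ell\le n-M-1$) yields~\eqref{Eq_Prp_AELRP_s_EPR_ss_AELPR_002_001b} with $\lambda_{f,M_-,M_+,n}(\xi)$ as in~\eqref{Eq_Prp_AELRP_s_EPR_ss_AELPR_002_001d}; the degree count is immediate, the term $\ell=0$ contributing the top degree $n$ through $\mu_{f,M_-,M_+,n}(\xi)$.

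For the $h$-part the only new ingredient is that the shift must be carried through the reconstruction relation rather than applied directly to $h$. Differentiating~\eqref{Eq_Def_AELRP_s_RPERR_ss_RP_001_001a} $s\le N_\tsc{tj}$ times under the integral sign (as in~\lemrefnp{Lem_AELRP_s_RPERR_ss_RP_001}, which is licit since $h\in C^{N_\tsc{tj}+1}$) gives $f^{(s)}(x_i)=\int_{-1/2}^{1/2}h^{(s)}(x_i+u\Delta x)\,du$; Taylor-expanding $h^{(s)}(x_i+u\Delta x)$ about $x_i+\xi\Delta x$ and integrating term by term, with $\int_{-1/2}^{1/2}(u-\xi)^\ell\,du=\frac{(-1)^{\ell+1}}{\ell+1}\bigl((\xi-\tfrac{1}{2})^{\ell+1}-(\xi+\tfrac{1}{2})^{\ell+1}\bigr)$, yields $f^{(s)}(x_i)=\sum_{\ell=0}^{N_\tsc{tj}-s}\frac{(-1)^{\ell+1}}{(\ell+1)!}\bigl((\xi-\tfrac{1}{2})^{\ell+1}-(\xi+\tfrac{1}{2})^{\ell+1}\bigr)\Delta x^\ell h^{(s+\ell)}(x_i+\xi\Delta x)+O(\Delta x^{N_\tsc{tj}-s+1})$. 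Inserting this into~\eqref{Eq_Prp_AELRP_s_EPR_ss_AELPR_001_001c}, re-indexing $n:=s+\ell$ exactly as before and reading off the coefficient of $\Delta x^n h^{(n)}(x_i+\xi\Delta x)$ gives~\eqref{Eq_Prp_AELRP_s_EPR_ss_AELPR_002_001a} with $\lambda_{h,M_-,M_+,n}(\xi)$ as in~\eqref{Eq_Prp_AELRP_s_EPR_ss_AELPR_002_001c}; here the bracket $(\xi-\tfrac{1}{2})^{\ell+1}-(\xi+\tfrac{1}{2})^{\ell+1}$ has degree $\ell$ (its leading terms cancel), so $\mu_{h,M_-,M_+,n-\ell}(\xi)$ times it has degree $\le n$, with the $\ell=0$ term $-\mu_{h,M_-,M_+,n}(\xi)$ pinning the degree at exactly $n$.

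I expect no genuine obstacle here: the substantive content was already established in~\prprefnp{Prp_AELRP_s_EPR_ss_AELPR_001}, and what remains is bookkeeping. The one point that needs care is checking that the truncation errors telescope correctly --- each $f^{(s)}(x_i)$ is expanded only to order $N_\tsc{tj}-s$, so that the accumulated remainder is uniformly $O(\Delta x^{N_\tsc{tj}+1})$ over the compact $\xi$-range of interest --- together with confirming that, after the change of summation variable, the limits of the double sum are precisely $M+1\le s\le n\le N_\tsc{tj}$, i.e.\ $0\le\ell\le n-M-1$.
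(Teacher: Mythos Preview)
Your proposal is correct and follows essentially the same route as the paper: for $E_f$ you Taylor-shift $f^{(s)}(x_i)$ to $x_i+\xi\Delta x$ and re-index $n:=s+\ell$, and for $E_h$ you express $f^{(s)}(x_i)$ as the cell-average of $h^{(s)}$, Taylor-expand the integrand about $x_i+\xi\Delta x$, integrate term-by-term to obtain the factor $\frac{(-1)^{\ell+1}}{(\ell+1)!}\bigl((\xi-\tfrac12)^{\ell+1}-(\xi+\tfrac12)^{\ell+1}\bigr)$, and re-index again. The paper does exactly this (its two footnotes record precisely the two auxiliary expansions you write down), and your degree count for $\lambda_{h,M_-,M_+,n}$ via the cancellation of the leading term in $(\xi-\tfrac12)^{\ell+1}-(\xi+\tfrac12)^{\ell+1}$ is the same observation the paper makes.
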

%
%
\begin{proof}
\begin{subequations}
                                                                                                       \label{Eq_Prp_AELRP_s_EPR_ss_AELPR_002_002}
Taylor-expanding $f_i$ in \eqref{Eq_Prp_AELRP_s_EPR_ss_AELPR_001_001e}, around the point $x_i+\xi\Delta x$,\footnote{\label{ff_Prp_AELRP_s_EPR_ss_AELPR_002_001}
                                                                                                                   ${\displaystyle f^{(n)}(x)=\sum_{\ell=0}^{N_\tsc{tj}}\dfrac{(-\xi)^\ell}
                                                                                                                                                              {\ell!      }\;\Delta x^\ell\;f^{(n+\ell)}(x+\xi\Delta x)
                                                                                                                             +O(\Delta x^{N_\tsc{tj}+1})}$}
we have
\begin{alignat}{6}
E_f(x_i+\xi\Delta x;\tsc{s}_{i,M_-,M_+},\Delta x)=&\sum_{s=M+1}^{N_\tsc{tj}}\mu_{f,M_-,M_+,s}(\xi)\Delta x^s
                                                                            \left(\sum_{\ell=0}^{N_\tsc{tj}-s}\dfrac{(-\xi)^\ell}
                                                                                                                    {\ell!      }\;\Delta x^\ell\;f^{(s+\ell)}(x_i+\xi\Delta x)\right)+O(\Delta x^{N_\tsc{tj}+1})
                                                                                                       \notag\\
                                                 =&\sum_{s=M+1}^{N_\tsc{tj}}\sum_{\ell=0}^{N_\tsc{tj}-s}\mu_{f,M_-,M_+,s}(\xi)
                                                                                                        \dfrac{(-\xi)^\ell}
                                                                                                              {\ell!      }\;\Delta x^{s+\ell}\;f^{(s+\ell)}(x_i+\xi\Delta x)+O(\Delta x^{N_\tsc{tj}+1})
                                                                                                       \notag\\
                                                 =&\sum_{n=M+1}^{N_\tsc{tj}}\sum_{\ell=0}^{n-M-1}\mu_{f,M_-,M_+,n-\ell}(\xi)
                                                                                                 \dfrac{(-\xi)^\ell}
                                                                                                       {\ell!      }\;\Delta x^n\;f^{(n)}(x_i+\xi\Delta x)+O(\Delta x^{N_\tsc{tj}+1})
                                                                                                       \label{Eq_Prp_AELRP_s_EPR_ss_AELPR_002_002a}
\end{alignat}
which proves \eqref{Eq_Prp_AELRP_s_EPR_ss_AELPR_002_001b}.

Replacing $f_i^{(s)}$ in~\eqref{Eq_Prp_AELRP_s_EPR_ss_AELPR_001_001c} by its expansion\footnote{\label{ff_Prp_AELRP_s_EPR_ss_AELPR_002_002}
                                                                                              Approximating $h(\zeta)$ (which was assumed to be of class $C^N$ in \lemrefnp{Lem_AELRP_s_RPERR_ss_RP_001})
                                                                                              in~\eqref{Eq_Def_AELRP_s_RPERR_ss_RP_001_001a}
                                                                                              by the corresponding Taylor-polynomial (Taylor-jet) of order $N_\tsc{tj}$~\cite[pp. 219--232]{Zorich_2004a}
                                                                                              around $\zeta=x+\xi\Delta x$ yields, $\forall N_\tsc{tj}\in{\mathbb N}: N_\tsc{tj}<N$,
\begin{alignat}{6}
f(x)=&\dfrac{1}{\Delta x}\int_{x-\frac{1}{2}\Delta x}^{x+\frac{1}{2}\Delta x}{\left\lgroup\left(
                                                                              \sum_{\ell=0}^{N_\tsc{tj}}\dfrac{(\zeta-x-\xi\Delta x)^\ell}
                                                                                                              {\ell!                     } h^{(\ell)}(x+\xi\Delta x)\right)
                                                                             +O\left((\zeta-x-\xi\Delta x)^{N_\tsc{tj}+1}\right)\right\rgroup d\zeta}
                                                                                                       \notag\\
    =&\dfrac{1}{\Delta x}\int_{x-\frac{1}{2}\Delta x}^{x+\frac{1}{2}\Delta x}{\left(\sum_{\ell=0}^{N_\tsc{tj}}\dfrac{(\zeta-x-\xi\Delta x)^\ell}
                                                                                                                    {\ell!                     } h^{(\ell)}(x+\xi\Delta x)\right)d\zeta}
                                                                              +O(\Delta x^{N_\tsc{tj}+1})
    = \dfrac{1}{\Delta x}\sum_{\ell=0}^{N_\tsc{tj}}\left(\int_{(-\frac{1}{2}-\xi)\Delta x}^{(+\frac{1}{2}-\xi)\Delta x}{\dfrac{\eta^\ell}{\ell!}d\eta}\right)h^{(\ell)}(x+\xi\Delta x)+O(\Delta x^{N_\tsc{tj}+1})
                                                                                                       \notag\\
    =&\sum_{\ell=0}^{N_\tsc{tj}}\left(\dfrac{(-1)^{\ell+1}}
                                            {(\ell+1)!    }\left((\xi-\tfrac{1}{2})^{\ell+1}
                                                                -(\xi+\tfrac{1}{2})^{\ell+1}\right)\Delta x^{\ell}\;h^{(\ell)}(x+\xi\Delta x)\right)+O(\Delta x^{N_\tsc{tj}+1})
                                                                                                       \notag
\end{alignat}}
in terms of the derivatives $\Delta x^{\ell}\;h^{(s+\ell)}(x_i+\xi\Delta x)$
we have
\begin{alignat}{6}
E_h(x_i+\xi\Delta x;\tsc{s}_{i,M_-,M_+},\Delta x)=&\sum_{s=M+1}^{N_\tsc{tj}}\mu_{h,M_-,M_+,s}(\xi)\Delta x^s
                                                                       \left\lgroup\sum_{\ell=0}^{N_\tsc{tj}-s}\dfrac{(-1)^{\ell+1}}
                                                                                                                     {(\ell+1)!    }\left((\xi-\tfrac{1}{2})^{\ell+1}
                                                                                                                                         -(\xi+\tfrac{1}{2})^{\ell+1}\right)
                                                                                                               \Delta x^{\ell}\;h^{(s+\ell)}(x_i+\xi\Delta x)\right\rgroup
                                                                                                       \notag\\
                                                 +&O(\Delta x^{N_\tsc{tj}+1})
                                                                                                       \notag\\
                                                 =&\sum_{s=M+1}^{N_\tsc{tj}}\sum_{\ell=0}^{N_\tsc{tj}-s}\mu_{h,M_-,M_+,s}(\xi)
                                                                                                        \dfrac{(-1)^{\ell+1}}
                                                                                                              {(\ell+1)!    }\left((\xi-\tfrac{1}{2})^{\ell+1}
                                                                                                                                  -(\xi+\tfrac{1}{2})^{\ell+1}\right)
                                                                                                        \Delta x^{s+\ell}\;h^{(s+\ell)}(x_i+\xi\Delta x)
                                                                                                       \notag\\
                                                 +&O(\Delta x^{N_\tsc{tj}+1})
                                                                                                       \notag\\
                                                 =&\sum_{n=M+1}^{N_\tsc{tj}}\sum_{\ell=0}^{n-M-1}\mu_{h,M_-,M_+,s}(\xi)
                                                                                                 \dfrac{(-1)^{\ell+1}}
                                                                                                       {(\ell+1)!    }\left((\xi-\tfrac{1}{2})^{\ell+1}
                                                                                                                           -(\xi+\tfrac{1}{2})^{\ell+1}\right)
                                                                                                 \Delta x^{n}\;h^{(n)}(x_i+\xi\Delta x)
                                                                                                       \notag\\
                                                 +&O(\Delta x^{N_\tsc{tj}+1})
                                                                                                       \label{Eq_Prp_AELRP_s_EPR_ss_AELPR_002_002a}
\end{alignat}
which proves \eqref{Eq_Prp_AELRP_s_EPR_ss_AELPR_002_001a}.
Obviously, by \eqref{Eq_Prp_AELRP_s_EPR_ss_AELPR_002_001d}, ${\rm deg}(\lambda_{f,M_-,M_+,n}(\xi))=n$.
It is easy\footnote{\label{ff_Prp_AELRP_s_EPR_ss_AELPR_002_003}
                    ${\displaystyle\left((\xi-\tfrac{1}{2})^{\ell+1}
                                        -(\xi+\tfrac{1}{2})^{\ell+1}\right)=\sum_{k=0}^{\ell+1}\left(\binom{\ell+1}{k}\xi^{\ell+1-k}\left(-\tfrac{1}{2}\right)^k\right)
                                                                           -\sum_{k=0}^{\ell+1}\left(\binom{\ell+1}{k}\xi^{\ell+1-k}\left(+\tfrac{1}{2}\right)^k\right)
                                                                           =\xi^{\ell+1}\left(\left(-\tfrac{1}{2}\right)^0-\left(+\tfrac{1}{2}\right)^0\right)
                                                                           +\sum_{k=1}^{\ell+1}\left(\binom{\ell+1}{k}\xi^{\ell+1-k}\left(\left(-\tfrac{1}{2}\right)^k
                                                                                                                                         -\left(+\tfrac{1}{2}\right)^k\right)\right)}$}
to verify that, by \eqref{Eq_Prp_AELRP_s_EPR_ss_AELPR_002_001c}, ${\rm deg}(\lambda_{h,M_-,M_+,n}(\xi))=n$, which completes the proof.\qed
\end{subequations}
\end{proof}
%

%
%
%
%
%
\subsection{Approximation error of $h_{i\pm\frac{1}{2}}$ and of $f_i'$}\label{AELRP_s_EPR_ss_AEhdf}
%
%
%
%
%

One of the principal uses of the reconstructing polynomial being the numerical approximation of $f_i':=f'(x_i)$ via \eqref{Eq_Lem_AELRP_s_RPERR_ss_RP_001_002},
we give in this section the relations concerning the approximation error of $h_{i\pm\frac{1}{2}}:=h(x_i\pm\tfrac{1}{2}\Delta x)$ \crlref{Crl_AELRP_s_EPR_ss_AEhdf_001}
and of $f_i'$ \crlref{Crl_AELRP_s_EPR_ss_AEhdf_002}, which are readily obtained by application of \prprefnp{Prp_AELRP_s_EPR_ss_AELPR_002}.
%
\begin{corollary}[Accuracy at $i+\tfrac{1}{2}$ of Lagrange polynomial reconstruction on $\tsc{s}_{i,M_-,M_+}$]
\label{Crl_AELRP_s_EPR_ss_AEhdf_001}
Let $p_f(x;\tsc{s}_{i,M_-,M_+},\Delta x)$ and $p_h(x;\tsc{s}_{i,M_-,M_+},\Delta x)$ be a polynomial \lemref{Lem_AELRP_s_RP_ss_PRP_001} reconstruction pair \defref{Def_AELRP_s_RPERR_ss_RP_001} $p_h=R_{(1;\Delta x)}(p_f)$,
satisfying the conditions of \prprefnp{Prp_AELRP_s_EPR_ss_PR_001}.
Then, the reconstructed value at $x_{i+\frac{1}{2}}:=x_i+\tfrac{1}{2}\Delta x$,
which will be noted $\hat h_{\tsc{s}_{i,M_-,M_+},i+\frac{1}{2}}$,
approximates $h_{i+\frac{1}{2}}:=h(x_{i+\frac{1}{2}})$ to $O(\Delta x^{M+1})$ with $M:=M_-+M_+\geq0$. The error of the approximation can be expanded in powers of $\Delta x$ with coefficients involving
the derivatives $h^{(m)}_{i+\frac{1}{2}}:=h^{(m)}(x_{i+\frac{1}{2}})$
\begin{subequations}
                                                                                                       \label{Eq_Crl_AELRP_s_EPR_ss_AEhdf_001_001}\\
\begin{alignat}{6}
\hat h_{\tsc{s}_{i,M_-,M_+},i+\frac{1}{2}}:=&p_h(x_{i+\frac{1}{2}};\tsc{s}_{i,M_-,M_+},\Delta x)
                                                                                                       \label{Eq_Crl_AELRP_s_EPR_ss_AEhdf_001_001a}\\
                                           =&\sum_{\ell=-M_-}^{M_+} a_{M_-,M_+,\ell}\;f_{i+\ell}
                                                                                                       \label{Eq_Crl_AELRP_s_EPR_ss_AEhdf_001_001b}\\
                                           =&h_{i+\frac{1}{2}}+
                                                \underbrace{\sum_{s=M+1}^{N_\tsc{tj}}\Lambda_{M_-,M_+,s}\Delta x^s h^{(s)}_{i+\frac{1}{2}}+O(\Delta x^{N_\tsc{tj}+1})}_{\displaystyle O(\Delta x^{M+1})}
                                                                                                       \label{Eq_Crl_AELRP_s_EPR_ss_AEhdf_001_001c}
\end{alignat}
where the constants $\Lambda_{M_-,M_+,s}$ are given by
\begin{equation}
\Lambda_{M_-,M_+,s}:=\lambda_{h,M_-,M_+,s}(\tfrac{1}{2})=\sum_{\ell=0}^{s-M-1}\frac{(-1)^\ell}{(\ell+1)!}\mu_{h,M_-,M_+,s-\ell}(\tfrac{1}{2})
                                                                                                       \label{Eq_Crl_AELRP_s_EPR_ss_AEhdf_001_001d}
\end{equation}
with
$\lambda_{h,M_-,M_+,s}(\xi)$ being the degree $s$ in $\xi$ polynomial defined by \eqref{Eq_Prp_AELRP_s_EPR_ss_AELPR_002_001c},
$\mu_{h,M_-,M_+,s}(\xi)$ being the degree $s$ in $\xi$ polynomial defined by \eqref{Eq_Prp_AELRP_s_EPR_ss_AELPR_001_001f},
and\footnote{\label{ff_Crl_AELRP_s_EPR_ss_AEhdf_001_001}
Notice that Shu~\cite{Shu_1998a}, following a different route, has shown that
\begin{equation}
a_{M_-,M_+,\ell}=\sum_{m=\ell+M_-+1}^{M+1}\dfrac{\displaystyle\sum_{\begin{array}{c}p=0\\p\neq m\\\end{array}}^{M+1}
                                                              \prod_{\begin{array}{c}q=0\\q\neq m\\q\neq p\\\end{array}}^{M+1}(M_--q+1)}
                                                {\displaystyle\prod_{\begin{array}{c}p=0\\p\neq m\\\end{array}}^{M+1}(m-p)}
                                                                                                       \notag
\end{equation}
is an equivalent expression for the coefficients $a_{M_-,M_+,\ell}$ \eqref{Eq_Crl_AELRP_s_EPR_ss_AEhdf_001_001e}.
}
\begin{equation}
a_{M_-,M_+,\ell}:=\alpha_{h,M_-,M_+,\ell}(\tfrac{1}{2})
                                                                                                       \label{Eq_Crl_AELRP_s_EPR_ss_AEhdf_001_001e}
\end{equation}
with $\alpha_{h,M_-,M_+,\ell}(\xi)$ being the degree $M$ in $\xi$ polynomial defined by \eqref{Eq_Prp_AELRP_s_EPR_ss_PR_001_001g}.
\end{subequations}
\end{corollary}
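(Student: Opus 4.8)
The plan is to deduce the statement as the specialization to the abscissa $\xi=\tfrac{1}{2}$ (which corresponds to $x_i+\xi\Delta x=x_{i+\frac{1}{2}}$) of Proposition~\ref{Prp_AELRP_s_EPR_ss_PR_001} and Proposition~\ref{Prp_AELRP_s_EPR_ss_AELPR_002}. First I would evaluate~\eqref{Eq_Prp_AELRP_s_EPR_ss_PR_001_001d} at $\xi=\tfrac{1}{2}$: since $\alpha_{h,M_-,M_+,\ell}(\xi)$ is, by~\eqref{Eq_Prp_AELRP_s_EPR_ss_PR_001_001g}, a polynomial of degree $M$ in $\xi$ whose coefficients depend only on $(M_-,M_+,\ell)$, its value $a_{M_-,M_+,\ell}:=\alpha_{h,M_-,M_+,\ell}(\tfrac{1}{2})$ is a constant independent of $i$ and of the sampled data $f_{i+\ell}$ (as required by Remark~\ref{Rmk_AELRP_s_EPR_ss_PR_001}), and this yields immediately~\eqref{Eq_Crl_AELRP_s_EPR_ss_AEhdf_001_001a}--\eqref{Eq_Crl_AELRP_s_EPR_ss_AEhdf_001_001b} together with the definition~\eqref{Eq_Crl_AELRP_s_EPR_ss_AEhdf_001_001e}.

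For the error expansion I would evaluate~\eqref{Eq_Prp_AELRP_s_EPR_ss_AELPR_002_001a} at $\xi=\tfrac{1}{2}$. There the summation index $n$ runs from $M+1$, so the leading contribution is of order $\Delta x^{M+1}$, which establishes the claimed $O(\Delta x^{M+1})$ accuracy; the coefficients specialize to $\Lambda_{M_-,M_+,s}:=\lambda_{h,M_-,M_+,s}(\tfrac{1}{2})$ and the derivatives to $h^{(n)}(x_i+\tfrac{1}{2}\Delta x)=h^{(n)}_{i+\frac{1}{2}}$, which is precisely~\eqref{Eq_Crl_AELRP_s_EPR_ss_AEhdf_001_001c}. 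That the displayed series is the genuine reconstruction error, and not a merely formal object, follows from the relation $E_h=R_{(1;\Delta x)}(E_f)$ recorded in Proposition~\ref{Prp_AELRP_s_EPR_ss_AELPR_001}; the same smoothness hypothesis on $f$ and $h$ used there is understood here.

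It then remains to turn $\lambda_{h,M_-,M_+,s}(\tfrac{1}{2})$ into the closed form~\eqref{Eq_Crl_AELRP_s_EPR_ss_AEhdf_001_001d}. Setting $\xi=\tfrac{1}{2}$ in the defining formula~\eqref{Eq_Prp_AELRP_s_EPR_ss_AELPR_002_001c}, the factor $(\xi-\tfrac{1}{2})^{\ell+1}$ vanishes for every $\ell\geq 0$ because $\ell+1\geq 1$, while $(\xi+\tfrac{1}{2})^{\ell+1}=1$, so the bracket $\left((\xi-\tfrac{1}{2})^{\ell+1}-(\xi+\tfrac{1}{2})^{\ell+1}\right)$ collapses to $-1$ and the sign becomes $(-1)^{\ell+1}\cdot(-1)=(-1)^{\ell}$; hence $\lambda_{h,M_-,M_+,s}(\tfrac{1}{2})=\sum_{\ell=0}^{s-M-1}\tfrac{(-1)^{\ell}}{(\ell+1)!}\,\mu_{h,M_-,M_+,s-\ell}(\tfrac{1}{2})$, which is~\eqref{Eq_Crl_AELRP_s_EPR_ss_AEhdf_001_001d}. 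The degree statements on $\lambda_{h,M_-,M_+,s}$ and $\alpha_{h,M_-,M_+,\ell}$ are inherited verbatim from the two propositions. There is essentially no hard step here: the entire argument is the substitution $\xi\mapsto\tfrac{1}{2}$, the only point deserving care being the elementary vanishing-and-sign bookkeeping just described. I would not reprove the alternative closed form for $a_{M_-,M_+,\ell}$ attributed to Shu~\cite{Shu_1998a} in the footnote --- establishing that identity directly from~\eqref{Eq_Prp_AELRP_s_EPR_ss_PR_001_001g} is a separate combinatorial exercise --- and would simply cite it.
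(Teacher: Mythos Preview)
Your proposal is correct and follows essentially the same route as the paper: evaluate~\eqref{Eq_Prp_AELRP_s_EPR_ss_PR_001_001d} and~\eqref{Eq_Prp_AELRP_s_EPR_ss_AELPR_002_001a} at $\xi=\tfrac{1}{2}$, then simplify $\lambda_{h,M_-,M_+,s}(\tfrac{1}{2})$ via~\eqref{Eq_Prp_AELRP_s_EPR_ss_AELPR_002_001c}. Your explicit sign-and-vanishing bookkeeping for the bracket $(\xi-\tfrac{1}{2})^{\ell+1}-(\xi+\tfrac{1}{2})^{\ell+1}$ at $\xi=\tfrac{1}{2}$ is exactly the step the paper leaves implicit when it writes ``using the definition~\eqref{Eq_Prp_AELRP_s_EPR_ss_AELPR_002_001c} to compute $\lambda_{h,M_-,M_+,s}(\tfrac{1}{2})$ completes the proof.''
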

%
%
\begin{proof}
Using~\eqref{Eq_Prp_AELRP_s_EPR_ss_PR_001_001d} and~\eqref{Eq_Prp_AELRP_s_EPR_ss_AELPR_002_001a},
in the definition of $\hat h_{\tsc{s}_{i,M_-,M_+},i+\frac{1}{2}}$~\eqref{Eq_Crl_AELRP_s_EPR_ss_AEhdf_001_001a},
we have immediately
\begin{alignat}{6}
\hat h_{\tsc{s}_{i,M_-,M_+},i+\frac{1}{2}}:=&p_h(x_{i+\frac{1}{2}};\tsc{s}_{i,M_-,M_+},\Delta x)
                                           = \sum_{\ell=-M_-}^{M_+}\alpha_{h,M_-,M_+,\ell}(\tfrac{1}{2})f_{i+\ell}
                                                                                                       \notag\\
                                           =&h_{i+\frac{1}{2}}+
                                            \underbrace{\sum_{s=M+1}^{N_\tsc{tj}}\lambda_{h,M_-,M_+,s}(\tfrac{1}{2})\;\Delta x^s\;h_{i+\frac{1}{2}}^{(s)}
                                                       +O(\Delta x^{N_\tsc{tj}+1})}_{\displaystyle E_h(x_i+\tfrac{1}{2}\Delta x;\tsc{s}_{i,M_-,M_+},\Delta x)}
                                                                                                       \label{Eq_Crl_AELRP_s_EPR_ss_AEhdf_001_002}
\end{alignat}
and using the definition \eqref{Eq_Prp_AELRP_s_EPR_ss_AELPR_002_001c} to compute $\lambda_{h,M_-,M_+,s}(\tfrac{1}{2})$ completes the proof.\qed
\end{proof}
%
%
\begin{corollary}[Order-of-accuracy of Lagrange polynomial reconstruction]
\label{Crl_AELRP_s_EPR_ss_AEhdf_002}
Assume the conditions of~\prprefnp{Prp_AELRP_s_EPR_ss_PR_001}. Then
\begin{alignat}{6}
\frac{\hat h_{\tsc{s}_{i,M_-,M_+},i+\frac{1}{2}}-\hat h_{\tsc{s}_{i-1,M_-,M_+},i-\frac{1}{2}}}{\Delta x} = f'_i +\sum_{n=M+1}^{N_\tsc{tj}}\Lambda_{M_-,M_+,n}\Delta x^n f^{(n+1)}_i +O(\Delta x^{N_\tsc{tj}+1})
                                                                                                         = f'_i +O(\Delta x^{M+1})
                                                                                                       \label{Eq_Crl_AELRP_s_EPR_ss_AEhdf_002_001}
\end{alignat}
where $\hat h_{\tsc{s}_{i-1,M_-,M_+},i-\frac{1}{2}}=\hat h_{\tsc{s}_{i-1,M_-,M_+},i-1+\frac{1}{2}}$ \eqref{Eq_Crl_AELRP_s_EPR_ss_AEhdf_001_001a},
and the constants $\Lambda_{M_-,M_+,n}$ are defined by~\eqref{Eq_Crl_AELRP_s_EPR_ss_AEhdf_001_001d}.
\end{corollary}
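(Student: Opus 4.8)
The plan is to read the result straight off Corollary~\ref{Crl_AELRP_s_EPR_ss_AEhdf_001}, applied on the two stencils $\tsc{s}_{i,M_-,M_+}$ and $\tsc{s}_{i-1,M_-,M_+}$, and then to collapse the resulting difference quotient by means of the exact reconstruction relation~\eqref{Eq_Lem_AELRP_s_RPERR_ss_RP_001_001} of Lemma~\ref{Lem_AELRP_s_RPERR_ss_RP_001}.

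First I would note that $x_{(i-1)+\frac{1}{2}}=x_{i-1}+\tfrac{1}{2}\Delta x=x_i-\tfrac{1}{2}\Delta x=x_{i-\frac{1}{2}}$, so that $\hat h_{\tsc{s}_{i-1,M_-,M_+},i-\frac{1}{2}}$ is precisely the value at $x_{i-\frac{1}{2}}$ of the Lagrange reconstructing polynomial built on $\tsc{s}_{i-1,M_-,M_+}$, hence the quantity furnished by Corollary~\ref{Crl_AELRP_s_EPR_ss_AEhdf_001} with pivot point $i-1$. Applying Corollary~\ref{Crl_AELRP_s_EPR_ss_AEhdf_001} once with pivot $i$ and once with pivot $i-1$ (and, to keep the remainder sharp, with $N_\tsc{tj}+1$ in place of $N_\tsc{tj}$) gives
\begin{align*}
\hat h_{\tsc{s}_{i,M_-,M_+},i+\frac{1}{2}}-\hat h_{\tsc{s}_{i-1,M_-,M_+},i-\frac{1}{2}}
={}&\bigl(h_{i+\frac{1}{2}}-h_{i-\frac{1}{2}}\bigr)\\
   &+\sum_{s=M+1}^{N_\tsc{tj}+1}\Lambda_{M_-,M_+,s}\,\Delta x^{s}\bigl(h^{(s)}_{i+\frac{1}{2}}-h^{(s)}_{i-\frac{1}{2}}\bigr)+O(\Delta x^{N_\tsc{tj}+2}).
\end{align*}
The crucial observation is that the constants $\Lambda_{M_-,M_+,s}$~\eqref{Eq_Crl_AELRP_s_EPR_ss_AEhdf_001_001d} depend only on $M_\pm$ and $s$, not on the pivot point (cf.\ Remark~\ref{Rmk_AELRP_s_EPR_ss_PR_001}); the \emph{same} coefficients therefore occur in both expansions, which is why the leading parts $h_{i\pm\frac{1}{2}}$ telescope cleanly and the higher-order parts recombine term by term.

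Next I would divide by $\Delta x$ and use~\eqref{Eq_Lem_AELRP_s_RPERR_ss_RP_001_001} at $x=x_i$: taking $n=1$ gives $\bigl(h_{i+\frac{1}{2}}-h_{i-\frac{1}{2}}\bigr)/\Delta x=f'_i$, and taking $n=s+1$ gives $\bigl(h^{(s)}_{i+\frac{1}{2}}-h^{(s)}_{i-\frac{1}{2}}\bigr)/\Delta x=f^{(s+1)}_i$. Substituting, the difference quotient becomes $f'_i+\sum_{s=M+1}^{N_\tsc{tj}+1}\Lambda_{M_-,M_+,s}\,\Delta x^{s}f^{(s+1)}_i+O(\Delta x^{N_\tsc{tj}+1})$, and absorbing the $s=N_\tsc{tj}+1$ summand (which is itself $O(\Delta x^{N_\tsc{tj}+1})$) into the remainder yields exactly~\eqref{Eq_Crl_AELRP_s_EPR_ss_AEhdf_002_001}. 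The coarser statement $f'_i+O(\Delta x^{M+1})$ is then immediate, the sum starting at $s=M+1$.

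I do not anticipate any real difficulty: all the analytic work is already contained in Corollary~\ref{Crl_AELRP_s_EPR_ss_AEhdf_001} and Lemma~\ref{Lem_AELRP_s_RPERR_ss_RP_001}, and the rest is algebra. The one point worth a moment's care is the remainder bookkeeping --- one must enter Corollary~\ref{Crl_AELRP_s_EPR_ss_AEhdf_001} with one extra order of expansion (and correspondingly assume $f$ and $h$ of class $C^{N_\tsc{tj}+2}$ on a neighbourhood of $[x_{i-1},x_{i+1}]$), so that after the division by $\Delta x$ the error term is still $O(\Delta x^{N_\tsc{tj}+1})$ and not merely $O(\Delta x^{N_\tsc{tj}})$.
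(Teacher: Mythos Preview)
Your proof is correct and follows essentially the same route as the paper: apply Corollary~\ref{Crl_AELRP_s_EPR_ss_AEhdf_001} at pivots $i$ and $i-1$, exploit the pivot-independence of $\Lambda_{M_-,M_+,s}$, subtract, divide by $\Delta x$, and invoke~\eqref{Eq_Lem_AELRP_s_RPERR_ss_RP_001_001}. Your explicit handling of the remainder --- entering Corollary~\ref{Crl_AELRP_s_EPR_ss_AEhdf_001} with one extra order so that the division by $\Delta x$ does not degrade the error to $O(\Delta x^{N_\tsc{tj}})$ --- is in fact more careful than the paper's own bookkeeping at this step.
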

%
%
\begin{proof}
The constants $\Lambda_{M_-,M_+,n}$~\eqref{Eq_Crl_AELRP_s_EPR_ss_AEhdf_001_001d} depend only on the 3 indices ($M_-,M_+,n$),
and not on the point index $i$~\rmkref{Rmk_AELRP_s_EPR_ss_PR_001},
because the polynomials $\mu_{h,M_-,M_+,s-\ell}(\xi)$~\eqref{Eq_Prp_AELRP_s_EPR_ss_AELPR_001_001f} are also independent of the point index $i$. Hence, we have, by \eqref{Eq_Crl_AELRP_s_EPR_ss_AEhdf_001_001c},
\begin{alignat}{6}
\hat h_{\tsc{s}_{i-1,M_-,M_+},i-\frac{1}{2}}=\hat h_{\tsc{s}_{i-1,M_-,M_+},i-1+\frac{1}{2}}
                                            =h_{i-\frac{1}{2}}+\sum_{s=M+1}^{N_\tsc{tj}}\Lambda_{M_-,M_+,s}\Delta x^s h^{(s)}_{i-\frac{1}{2}}+O(\Delta x^{N_\tsc{tj}+1})
                                                                                                       \label{Eq_Crl_AELRP_s_EPR_ss_AEhdf_002_002}
\end{alignat}
Subtracting~\eqref{Eq_Crl_AELRP_s_EPR_ss_AEhdf_002_002} from~\eqref{Eq_Crl_AELRP_s_EPR_ss_AEhdf_001_001c} yields
\begin{alignat}{6}
\frac{\hat h_{\tsc{s}_{i,M_-,M_+},i+\frac{1}{2}}-\hat h_{\tsc{s}_{i-1,M_-,M_+},i-\frac{1}{2}}}{\Delta x}= \frac{h_{i+\frac{1}{2}}-h_{i-\frac{1}{2}}}{\Delta x}
                                                   +\sum_{s=M+1}^{N_\tsc{tj}}\Lambda_{M_-,M_+,s}\Delta x^s \frac{h^{(s)}_{i+\frac{1}{2}}-h^{(s)}_{i-\frac{1}{2}}}{\Delta x}
                                                   +O(\Delta x^{N_\tsc{tj}+1})
                                                                                                       \label{Eq_Crl_AELRP_s_EPR_ss_AEhdf_002_003}
\end{alignat}
and using the exact relations~\eqref{Eq_Lem_AELRP_s_RPERR_ss_RP_001_001} we obtain~\eqref{Eq_Crl_AELRP_s_EPR_ss_AEhdf_002_001}.\qed
\end{proof}
%
%
\begin{remark}[Order-of-accuracy]
\label{Rmk_AELRP_s_PRBE_ss_PROA_003}
The previous result~\crlref{Crl_AELRP_s_EPR_ss_AEhdf_002} illustrates that the $O(\Delta x^{M+1})$ accuracy in approximating $f'$ is achieved,
using $O(\Delta x^{M+1})$ interpolates for $f$, because of the exact reconstruction relations~\lemref{Lem_AELRP_s_RPERR_ss_RP_001}.
Liu \etal~\cite{Liu_Osher_Chan_1994a} note this as an $O(\Delta x^{M})$ accuracy increased to $O(\Delta x^{M+1})$ at one chosen point, {\em viz} $x_i$.\qed
\end{remark}
%

%
%
%
%
%
%
%
%
%
%
\section{Interpolating and reconstructing polynomial}\label{AELRP_s_IRP}
%
%
%
%
%
%
%
%
%
%

We briefly summarize how the existence and uniqueness properties of the interpolating polynomial
carry on to the reconstructing polynomial. Consider first the general case of a polynomial reconstruction pair (\S\ref{AELRP_s_RP_ss_PRP}).
Combining the existence \lemref{Lem_AELRP_s_RP_ss_PRP_001} and uniqueness \rmkref{Rmk_AELRP_s_RPERR_ss_D_002} of polynomial reconstruction pairs, we can formulate
%
\begin{theorem}[Vector spaces of polynomial reconstruction pairs]
\label{Thm_AELRP_s_PRBE_ss_PR_001}
Consider the $(M+1)$-dimensional vector space of polynomials with real coefficients of degree $\leq M$ in $x$, $\mathbb{R}_M[x]$. Then the reconstruction mapping $R_{(1;\Delta x)}$ \defref{Def_AELRP_s_RPERR_ss_RP_001}
is a bijection of $\mathbb{R}_M[x]$ onto itself.
\end{theorem}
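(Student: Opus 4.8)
The plan is to read the statement as a finite-dimensional restatement of \lemref{Lem_AELRP_s_RP_ss_PRP_001}. Set $\mathcal{A}:=R^{-1}_{(1;\Delta x)}\big|_{\mathbb{R}_M[x]}$, the sliding-average operator $\mathcal{A}(p)(x)=\tfrac{1}{\Delta x}\int_{x-\frac12\Delta x}^{x+\frac12\Delta x}p(\zeta)\,d\zeta$. This $\mathcal{A}$ is manifestly well-defined and linear, and by the first part of \lemrefnp{Lem_AELRP_s_RP_ss_PRP_001} (equations \eqref{Eq_Lem_AELRP_s_RP_ss_PRP_001_001b}--\eqref{Eq_Lem_AELRP_s_RP_ss_PRP_001_001e}) the sliding average of a polynomial of degree $M$ is again a polynomial of degree $M$, so $\mathcal{A}$ is an endomorphism of the $(M+1)$-dimensional space $\mathbb{R}_M[x]$. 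By \defrefnp{Def_AELRP_s_RPERR_ss_RP_001}, $R_{(1;\Delta x)}$ restricted to $\mathbb{R}_M[x]$ is by construction the inverse of $\mathcal{A}$, once the latter is known to be invertible; since an invertible linear map between vector spaces is a bijection, and so is its inverse, the whole proof reduces to checking that $\mathcal{A}$ is invertible.

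For that I would invoke the second half of \lemrefnp{Lem_AELRP_s_RP_ss_PRP_001}: writing a polynomial in $\mathbb{R}_M[x]$ through its coefficient vector $(c_{h_m})_{m=0}^{M}$ in the basis $\{((x-x_i)/\Delta x)^m\}$, the operator $\mathcal{A}$ acts by \eqref{Eq_Lem_AELRP_s_RP_ss_PRP_001_001d}, while formula \eqref{Eq_Lem_AELRP_s_RP_ss_PRP_001_001f}, built from the numbers $\tau_{2k}$ of \lemrefnp{Lem_AELRP_s_RPERR_ss_D_001}, exhibits an explicit inverse $(c_{f_m})\mapsto(c_{h_m})$; the verification \eqref{Eq_Lem_AELRP_s_RP_ss_PRP_001_007} in the proof of \lemrefnp{Lem_AELRP_s_RP_ss_PRP_001}, which relies only on the defining recurrence \eqref{Eq_Lem_AELRP_s_RPERR_ss_D_001_001c} through the identity \eqref{Eq_Lem_AELRP_s_RPERR_ss_D_001_006}, shows this candidate is a genuine two-sided inverse. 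Two equivalent shortcuts are available: one may cite \lemrefnp{Lem_AELRP_s_RP_ss_MIPPRPLem_002}, since \eqref{Eq_Lem_AELRP_s_RP_ss_PRP_001_001d} splits into the two triangular systems \eqref{Eq_AELRP_s_RP_ss_MIPPRPLem_002} whose matrices are upper unitriangular, hence of determinant $1$ and invertible; or one may simply note that $\mathcal{A}$ strictly preserves degree and leaves leading coefficients unchanged, so $\mathcal{A}(p)=0$ forces $p=0$, and an injective endomorphism of a finite-dimensional space is bijective.

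Finally, the assertion that $R_{(1;\Delta x)}$ is a bona fide map on $\mathbb{R}_M[x]$, not merely a relation, is guaranteed by \rmkref{Rmk_AELRP_s_RPERR_ss_D_002}: a reconstruction pair with continuous $h$ is unique, so each $p_f\in\mathbb{R}_M[x]$ has exactly one reconstruction pair, which \lemrefnp{Lem_AELRP_s_RP_ss_PRP_001} places in $\mathbb{R}_M[x]$; the degenerate case $M=0$ is immediate, $\mathcal{A}$ being the identity on constants. I do not anticipate any genuine obstacle here --- the only point requiring care is the logical order: treat $\mathcal{A}=R^{-1}_{(1;\Delta x)}$ as the obviously well-defined direction, deduce its invertibility from \lemrefnp{Lem_AELRP_s_RP_ss_PRP_001}, and only then conclude that $R_{(1;\Delta x)}=\mathcal{A}^{-1}$ is a bijection of $\mathbb{R}_M[x]$ onto itself. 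All the substantive work (the numbers $\tau_n$, the explicit inversion, the recurrence) has already been done upstream, so the present argument is in essence a bookkeeping step.
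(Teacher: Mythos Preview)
Your proposal is correct and follows essentially the same approach as the paper: both arguments invoke \lemrefnp{Lem_AELRP_s_RP_ss_PRP_001} for existence of the map in both directions on $\mathbb{R}_M[x]$ and \rmkrefnp{Rmk_AELRP_s_RPERR_ss_D_002} for uniqueness. Your version is more explicit about the logical order (treating the averaging operator $R^{-1}_{(1;\Delta x)}$ as the primitive well-defined direction) and offers the alternative shortcuts via unitriangularity or degree preservation, but the core ingredients are identical to the paper's two-line proof.
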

%
%
\begin{proof}
By construction \lemref{Lem_AELRP_s_RP_ss_PRP_001} $\forall\;p(x)\;\in\;\mathbb{R}_M[x]\;\exists\;q(x)=[R_{(1;\Delta x)}(p)](x)\;\in\;\mathbb{R}_M[x]$, and inversely
$\forall\;q(x)\;\in\;\mathbb{R}_M[x]\;\exists\;p(x)=[R^{-1}_{(1;\Delta x)}(q)](x)\;\in\;\mathbb{R}_M[x]$. Furthermore, since the elements of $\mathbb{R}_M[x]$ are continuous functions,
the reconstruction pair $q(x)=[R_{(1;\Delta x)}(p)](x)$ is unique \rmkref{Rmk_AELRP_s_RPERR_ss_D_002}, which completes the proof.\qed
\end{proof}
%

In his recent review of \tsc{weno} schemes, Shu~\cite{Shu_2009a} stresses the difference between \tsc{weno} interpolation and \tsc{weno} reconstruction.
In this sense, $p_f(x;\tsc{s}_{i,M_-,M_+},\Delta x)$ in \prprefnp{Prp_AELRP_s_EPR_ss_PR_001} is the interpolating polynomial of $f(x)$ on $\tsc{s}_{i,M_-,M_+}$,
and $p_h(x;\tsc{s}_{i,M_-,M_+},\Delta x)$ is the reconstructing polynomial \defref{Def_AELRP_s_RPERR_ss_RP_002}. Of course
%
\begin{proposition}[Lagrange reconstructing polynomial]
\label{Prp_AELRP_s_PRBE_ss_IRP_001}
Assume the conditions of \prprefnp{Prp_AELRP_s_EPR_ss_AELPR_001}. The Lagrange reconstructing polynomial $p_h(x;\tsc{s}_{i,M_-,M_+},\Delta x)$
approximates $h(x)$ to $O(\Delta x^{M+1})$ but, unless $f(x)$ is a polynomial of degree $\leq M$, it does not interpolate $h(x)$ on $\tsc{s}_{i,M_-,M_+}$,
{\em ie}, if $f(x)$ is not a polynomial of degree $\leq M$, we have in general
\begin{equation}
p_h(x_i+\ell\Delta x;\tsc{s}_{i,M_-,M_+},\Delta x)\neq h(x_i+\ell\Delta x)\qquad \forall \ell\in\{-M_-,\cdots,M_+\}
                                                                                                       \label{Eq_Prp_AELRP_s_PRBE_ss_IRP_001_001}
\end{equation}
\end{proposition}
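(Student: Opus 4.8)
The first assertion---that $p_h(x;\tsc{s}_{i,M_-,M_+},\Delta x)$ approximates $h(x)$ to $O(\Delta x^{M+1})$---is already contained in \eqref{Eq_Prp_AELRP_s_EPR_ss_AELPR_001_001a} of \prprefnp{Prp_AELRP_s_EPR_ss_AELPR_001}, so I would simply cite it. For the trivial half of the second assertion I would note that if $f(x)$ is a polynomial of degree $\leq M$ then, by uniqueness of the Lagrange interpolant, $p_f(x;\tsc{s}_{i,M_-,M_+},\Delta x)=f(x)$, hence by \defrefnp{Def_AELRP_s_RPERR_ss_RP_002} and linearity of $R_{(1;\Delta x)}$ we get $p_h=R_{(1;\Delta x)}(p_f)=R_{(1;\Delta x)}(f)=h$, so that $p_h$ not merely interpolates but coincides with $h$. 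The real content is the converse: producing a non-polynomial $f$ for which \eqref{Eq_Prp_AELRP_s_PRBE_ss_IRP_001_001} holds.

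The plan for that is to inspect the leading term of the reconstruction error at a stencil node. I would start from $E_h=p_h-h=R_{(1;\Delta x)}(p_f-f)=R_{(1;\Delta x)}(E_f)$, where $E_f$ vanishes at every node of $\tsc{s}_{i,M_-,M_+}$ by construction, and ask whether $R_{(1;\Delta x)}$ preserves this. Setting $\xi=\ell\in\{-M_-,\dots,M_+\}$ in \eqref{Eq_Prp_AELRP_s_EPR_ss_AELPR_001_001c} gives $E_h(x_{i+\ell};\tsc{s}_{i,M_-,M_+},\Delta x)=\mu_{h,M_-,M_+,M+1}(\ell)\,\Delta x^{M+1}f^{(M+1)}(x_i)+O(\Delta x^{M+2})$, so it suffices to show that the polynomial $\mu_{h,M_-,M_+,M+1}(\xi)$ of \eqref{Eq_Prp_AELRP_s_EPR_ss_AELPR_001_001f} does not vanish at all $M+1$ nodes. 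Granting this, I would take $f(x)=x^{M+1}$ (whose reconstruction pair $h$ exists and is a polynomial of degree $M+1$ by \lemrefnp{Lem_AELRP_s_RP_ss_PRP_001}), for which \eqref{Eq_Prp_AELRP_s_EPR_ss_AELPR_001_001c} is an exact finite sum and $E_h(x_{i+\ell})=(M+1)!\,\mu_{h,M_-,M_+,M+1}(\ell)\,\Delta x^{M+1}\neq0$ at the offending node for every $\Delta x>0$; and for a generic analytic $f$ the whole series $\sum_{s\geq M+1}\mu_{h,M_-,M_+,s}(\ell)\Delta x^{s}f^{(s)}(x_i)$ is nonzero at each node.

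The key computation is the comparison of $\mu_{h,M_-,M_+,M+1}$ with $\mu_{f,M_-,M_+,M+1}$. From \eqref{Eq_Prp_AELRP_s_EPR_ss_AELPR_001_001f} and \eqref{Eq_Prp_AELRP_s_EPR_ss_AELPR_001_001g} the $k=0$ part of $\mu_{h,M_-,M_+,M+1}$ (where $\tau_0=1$) is exactly $\mu_{f,M_-,M_+,M+1}$, so the difference is the $k\geq1$ part, whose highest-degree term is $-\tau_2\,\xi^{M-1}/(M-1)!=\xi^{M-1}/\bigl(24(M-1)!\bigr)$ (using $\tau_2=-\tfrac{1}{24}$, \tabref{Tab_Lem_AELRP_s_RPERR_ss_D_001_001}); thus for $M\geq1$ it is a nonzero polynomial of degree exactly $M-1$. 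Since $\mu_{f,M_-,M_+,M+1}$ vanishes at all $M+1$ nodes (it is, up to the factor $-1/(M+1)!$, the classical node polynomial $\prod_{\ell}(\xi-\ell)$ governing the interpolation error), and a nonzero polynomial of degree $M-1<M+1$ cannot vanish at $M+1$ points, the difference---and hence $\mu_{h,M_-,M_+,M+1}$---must be nonzero at some node. The hard part, or at least the part that escapes this argument, is the degenerate single-point case $M=0$: there $\mu_{h,0,0,1}(\xi)=-\xi$ vanishes at the unique node $\ell=0$, and one must descend one order further, where $\mu_{h,0,0,2}(0)=-\tau_2=\tfrac{1}{24}\neq0$ gives $E_h(x_i;\tsc{s}_{i,0,0},\Delta x)=\tfrac{1}{24}\Delta x^{2}f''(x_i)+O(\Delta x^{3})$, already nonzero for $f(x)=x^{2}$. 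I would also stress that ``$p_h$ does not interpolate $h$'' is unavoidably a generic statement---at individual nodes the terms $\mu_{h,M_-,M_+,s}(\ell)\Delta x^{s}f^{(s)}(x_i)$ can cancel for specially tuned $f$---so the clean way to phrase the result is that there exists a non-polynomial $f$ (namely $x^{M+1}$, or $x^{2}$ when $M=0$) for which $p_h$ fails to interpolate $h$ on $\tsc{s}_{i,M_-,M_+}$, which is all that \eqref{Eq_Prp_AELRP_s_PRBE_ss_IRP_001_001} really asserts.
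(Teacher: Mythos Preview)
Your proposal is correct and takes a genuinely different route from the paper. The paper's proof is minimal: it picks one concrete reconstruction pair, $f(x)={\rm e}^{x-x_i}$ and $h(x)=g_\tau(\Delta x)\,{\rm e}^{x-x_i}$ \thmref{Thm_AELRP_s_RPERR_ss_GFtaunRPexp_001}, one concrete stencil $\tsc{s}_{i,1,1}$, writes $p_h$ explicitly from \eqref{Eq_Prp_AELRP_s_EPR_ss_PR_001_001g}, and simply evaluates $p_h(x_{i+\ell})-h(x_{i+\ell})$ for $\ell\in\{-1,0,1\}$ at a specific $\Delta x$ (say $\tfrac{1}{100}$) to see that none of the three values vanishes. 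That is all: a single numerical spot-check that illustrates the phenomenon but does not cover general $\tsc{s}_{i,M_-,M_+}$.

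Your argument is structural and stencil-independent. The observation that the $k=0$ contributions in \eqref{Eq_Prp_AELRP_s_EPR_ss_AELPR_001_001f} reproduce $\mu_{f,M_-,M_+,M+1}$, so that $\mu_{h,M_-,M_+,M+1}-\mu_{f,M_-,M_+,M+1}$ is a nonzero polynomial of exact degree $M-1$ (leading coefficient $-\tau_2/(M-1)!$), is clean; since $\mu_{f,M_-,M_+,M+1}$ vanishes at all $M+1$ nodes by \eqref{Eq_Lem_AELRP_s_EPR_ss_PR_001_001e}, $\mu_{h,M_-,M_+,M+1}$ can vanish at at most $M-1$ of them, and the polynomial counterexample $f(x)=x^{M+1}$ (with the $M=0$ case handled separately via $\mu_{h,0,0,2}(0)=-\tau_2$) works for every stencil and every $\Delta x>0$. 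What this buys over the paper is an actual proof for arbitrary $M_\pm$, plus an explanation of \emph{why} interpolation fails: applying $R_{(1;\Delta x)}$ to $E_f$ introduces lower-degree corrections (the $\tau_{2k}$, $k\geq1$, terms) that cannot share all the roots of the node polynomial. What the paper's example buys is a case where the inequality is seen to hold at \emph{every} node simultaneously, matching the displayed $\forall\ell$ in \eqref{Eq_Prp_AELRP_s_PRBE_ss_IRP_001_001}; your degree count only guarantees failure at at least two nodes for $f=x^{M+1}$, and you rightly flag the stronger ``every node'' claim as generic rather than universal.
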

%
%
\begin{proof}
Proof is obtained by contradiction. It suffices to give an example where the inequalities \eqref{Eq_Prp_AELRP_s_PRBE_ss_IRP_001_001} hold.
Consider the reconstruction pair \thmref{Thm_AELRP_s_RPERR_ss_GFtaunRPexp_001}
\begin{subequations}
                                                                                                       \label{Eq_Prp_AELRP_s_PRBE_ss_IRP_001_002}
\begin{equation}
f(x):={\rm e}^{x-x_i}\qquad;\qquad h(x)=[R_{(1;\Delta x)}(f)](x)=g_\tau(\Delta x){\rm e}^{x-x_i}
                                                                                                       \label{Eq_Prp_AELRP_s_PRBE_ss_IRP_001_002a}
\end{equation}
with $g_\tau$ defined by \eqref{Eq_Thm_AELRP_s_RPERR_ss_GFtaunRPexp_001_001b}. Consider the polynomial reconstruction of $f(x)$ \prpref{Prp_AELRP_s_EPR_ss_PR_001} on $\tsc{s}_{i,1,1}$.
By \eqref{Eq_Prp_AELRP_s_EPR_ss_PR_001_001d} and \eqref{Eq_Prp_AELRP_s_EPR_ss_PR_001_001g}
\begin{alignat}{6}
p_h(x_i+\xi\Delta x;\tsc{s}_{i,1,1},\Delta x)= f_{i-1}\left(\tfrac{1}{2}\xi^2-\tfrac{1}{2}\xi-\tfrac{1}{24}\right)
                                             + f_i    \left(\tfrac{13}{12}-\xi^2\right)
                                             + f_{i-1}\left(\tfrac{1}{2}\xi^2+\tfrac{1}{2}\xi-\tfrac{1}{24}\right)
                                                                                                       \label{Eq_Prp_AELRP_s_PRBE_ss_IRP_001_002b}
\end{alignat}
We have $f_i=1$ and $f_{i\pm1}={\rm e}^{\pm\Delta x}$, and evaluating $p_h(x_i+\ell\Delta x;\tsc{s}_{i,1,1},\Delta x)-h(x_i+\ell\Delta x)$,
using \eqref{Eq_Prp_AELRP_s_PRBE_ss_IRP_001_002b} and \eqref{Eq_Prp_AELRP_s_PRBE_ss_IRP_001_002a}, for $\ell=-1,0,1$, and for different values
of $\Delta x$ ({\em eg} $\Delta x=\tfrac{1}{100}$), we verify \eqref{Eq_Prp_AELRP_s_PRBE_ss_IRP_001_001}.
\end{subequations}
\qed
\end{proof}
%
 
Most of the results of existence and uniqueness properties of the interpolating polynomial hold, with appropriate adjustments,
for the reconstructing polynomial, because of \thmrefnp{Thm_AELRP_s_PRBE_ss_PR_001}. We briefly summarize in the following those
necessary to prove \tsc{weno} reconstruction relations~\cite{Shu_1998a,
                                                             Shu_2009a}.
%
\begin{theorem}[Existence and uniqueness of the Lagrange reconstructing polynomial]
\label{Thm_AELRP_s_PRBE_ss_IRP_001}
Assume the conditions of \prprefnp{Prp_AELRP_s_EPR_ss_AELPR_001}. There exists a unique Lagrange reconstructing polynomial $p_h(x;\tsc{s}_{i,M_-,M_+},\Delta x)$
of the form \eqref{Eq_Prp_AELRP_s_EPR_ss_PR_001_001d} which approximates $h(x)$ to $O(\Delta x^{M+1})$.
\end{theorem}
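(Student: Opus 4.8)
The plan is to get existence for free from the constructions already in place, and to reduce uniqueness to the non-singularity of the stencil Vandermonde matrix.

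For existence I would simply exhibit the candidate. Let $p_f(x;\tsc{s}_{i,M_-,M_+},\Delta x)$ be the Lagrange interpolating polynomial of $f$ on $\tsc{s}_{i,M_-,M_+}$ \defref{Def_AELRP_s_EPR_ss_PR_001} (a polynomial of degree $\leq M$), and set $p_h:=R_{(1;\Delta x)}(p_f)$. By \lemref{Lem_AELRP_s_RP_ss_PRP_001}, or equivalently by \thmref{Thm_AELRP_s_PRBE_ss_PR_001}, this $p_h$ exists and lies in $\mathbb{R}_M[x]$; by \prpref{Prp_AELRP_s_EPR_ss_PR_001} it has exactly the form \eqref{Eq_Prp_AELRP_s_EPR_ss_PR_001_001d} with the coefficient polynomials $\alpha_{h,M_-,M_+,\ell}$ of \eqref{Eq_Prp_AELRP_s_EPR_ss_PR_001_001g}; and by \prpref{Prp_AELRP_s_EPR_ss_AELPR_001}, relation \eqref{Eq_Prp_AELRP_s_EPR_ss_AELPR_001_001a}, it approximates $h(x)$ to $O(\Delta x^{M+1})$. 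This $p_h$ is the Lagrange reconstructing polynomial \defref{Def_AELRP_s_RPERR_ss_RP_002}, so existence is settled without new work.

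For uniqueness I would take a second polynomial $\widetilde{p}_h$ of the form \eqref{Eq_Prp_AELRP_s_EPR_ss_PR_001_001d}, say $\widetilde{p}_h(x_i+\xi\Delta x)=\sum_{\ell=-M_-}^{M_+}\widetilde{\alpha}_{\ell}(\xi)\,f_{i+\ell}$ with each $\widetilde{\alpha}_{\ell}$ a degree-$\leq M$ polynomial whose coefficients depend only on $(M_-,M_+,\ell)$ (hence not on $\Delta x$ and not on $f$), and also $O(\Delta x^{M+1})$-accurate for $h$. Subtracting the two approximations, the difference $\delta(\xi):=p_h-\widetilde{p}_h=\sum_{\ell}(\alpha_{h,M_-,M_+,\ell}-\widetilde{\alpha}_{\ell})(\xi)\,f_{i+\ell}$ is $O(\Delta x^{M+1})$ for each fixed $\xi$. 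I would then feed in a convenient test function, e.g. $f(x)=\mathrm{e}^{x-x_i}$, of class $C^{\infty}$ with $f_{i+\ell}=\mathrm{e}^{\ell\Delta x}$ and all $f_i^{(s)}=1$, expand in powers of $\Delta x$, and observe that the coefficient of each $\Delta x^{s}$ in $\delta$ equals $\tfrac{1}{s!}\sum_{\ell}(\alpha_{h,M_-,M_+,\ell}-\widetilde{\alpha}_{\ell})(\xi)\,\ell^{s}$, a number independent of $\Delta x$; therefore these coefficients must vanish for $s=0,\dots,M$. For each fixed $\xi$ this is a homogeneous linear system in the $M+1$ numbers $(\alpha_{h,M_-,M_+,\ell}-\widetilde{\alpha}_{\ell})(\xi)$, $-M_-\leq\ell\leq M_+$, whose matrix is the transpose of the Vandermonde matrix ${^{M_+}_{M_-}V}$ \defref{Def_AELRP_s_EPR_ss_PR_002} and hence invertible; so $\widetilde{\alpha}_{\ell}\equiv\alpha_{h,M_-,M_+,\ell}$ for every $\ell$, i.e. $\widetilde{p}_h=p_h$.

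The one place I expect to need care is making the uniqueness hypothesis precise: ``of the form \eqref{Eq_Prp_AELRP_s_EPR_ss_PR_001_001d}'' must be read as asserting that the reconstruction weights $\widetilde{\alpha}_{\ell}(\xi)$ carry no $\Delta x$-dependence (as the $\alpha_{h,M_-,M_+,\ell}$ of \eqref{Eq_Prp_AELRP_s_EPR_ss_PR_001_001g} manifestly do not), since it is exactly this that licenses reading off the vanishing of the $\Delta x$-coefficients of $\delta$. An alternative that avoids choosing a test function is to run the comparison over all $f\in\mathbb{R}_M[x]$: then $p_h=h$ exactly by \thmref{Thm_AELRP_s_PRBE_ss_PR_001}, the data vector $(f_{i+\ell})_{\ell}$ sweeps out all of $\mathbb{R}^{M+1}$, and the $\Delta x$-expansion of $\delta$ collapses to the same Vandermonde system; either way the non-singularity of ${^{M_+}_{M_-}V}$ is the crux and the remaining manipulations are routine.
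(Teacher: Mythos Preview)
Your existence argument coincides with the paper's: both simply cite the construction in \prprefnp{Prp_AELRP_s_EPR_ss_PR_001} and the accuracy estimate \prprefnp{Prp_AELRP_s_EPR_ss_AELPR_001}.

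For uniqueness the paper takes a shorter and genuinely different route. It observes that the Lagrange interpolating polynomial $p_f$ on $\tsc{s}_{i,M_-,M_+}$ is unique (standard approximation theory), and that the reconstruction map $R_{(1;\Delta x)}$ is a bijection on $\mathbb{R}_M[x]$ (\rmkrefnp{Rmk_AELRP_s_RPERR_ss_D_002}, \thmrefnp{Thm_AELRP_s_PRBE_ss_PR_001}); hence $p_h=R_{(1;\Delta x)}(p_f)$ is pinned down uniquely, end of proof. Your argument instead takes an arbitrary competitor $\widetilde p_h$ of the form \eqref{Eq_Prp_AELRP_s_EPR_ss_PR_001_001d} with $\Delta x$-independent weight polynomials, feeds in a test function, and forces $\widetilde\alpha_\ell=\alpha_{h,M_-,M_+,\ell}$ via the invertibility of ${^{M_+}_{M_-}V}$. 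This is correct (your caveat about the $\Delta x$-independence of the weights is exactly the right one), and it actually proves a slightly stronger statement: no other linear stencil formula of this shape can achieve $O(\Delta x^{M+1})$ accuracy, without assuming a priori that the competitor is the reconstruction pair of something. The paper's route buys brevity by leaning on \defrefnp{Def_AELRP_s_RPERR_ss_RP_002}; yours buys a sharper characterization at the cost of the Vandermonde computation.
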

%
%
\begin{proof}
Existence, with $\alpha_{h,M_-,M_+,\ell}(\xi)$ given by \eqref{Eq_Prp_AELRP_s_EPR_ss_PR_001_001g}, is proved in \prprefnp{Prp_AELRP_s_EPR_ss_PR_001} by construction.
We know from approximation theory~\cite{Henrici_1964a,
                                        Phillips_2003a}
that there is a unique Lagrange interpolating polynomial $p_f(x;\tsc{s}_{i,M_-,M_+},\Delta x)$ on $\tsc{s}_{i,M_-,M_+}$,
and that the reconstruction pair $p_h(x;\tsc{s}_{i,M_-,M_+},\Delta x)=[R_{(1;\Delta x)}(p_f)](x;\tsc{s}_{i,M_-,M_+},\Delta x)$ is unique \rmkref{Rmk_AELRP_s_RPERR_ss_D_002},
which completes the proof.\qed
\end{proof}
%

%
%
%
%
%
%
%
%
%
%
\section{Examples of applications}\label{AELRP_s_EA}
%
%
%
%
%
%
%
%
%
%

The analytical relations developed in the present work can prove quite useful in the analysis of practical \tsc{weno} schemes, and more generally in the
development of discretization schemes. Providing detailed analysis of such applications is beyond the scope of the present paper.
We sketch, nonetheless, in the following, 3 applications (the complete proofs will be given elsewhere), to illustrate the usefulness of the reconstruction
pair concept, of the associated application of the deconvolution \lemrefnp{Lem_AELRP_s_RPERR_ss_D_001}, and of the explicit expressions for the Lagrange reconstructing polynomial.

%
%
%
%
%
\subsection{Representation of the Lagrange reconstructing polynomial by combination of substencils}\label{AELRP_s_EA_ss_RRPCS}
%
%
%
%
%

All \tsc{weno}~\cite{Shu_1998a,
                     Shu_2009a}
schemes for reconstruction on the general homogeneous stencil $\tsc{s}_{i,M_-,M_+}$ \defref{Def_AELRP_s_EPR_ss_PR_001} are based
on the weighted combination of the reconstructions on $K_{\rm s}+1\leq M:=M_-+M_+$ substencils\footnote{\label{ff_AELRP_s_EA_ss_RRPCS_001}
                                                                                                        Notice that the family of subdivisions \eqref{Eq_AELRP_s_EA_ss_RRPCS_001}
                                                                                                        includes (when varying $K_{\rm s}$) all possible subdivisions
                                                                                                        to substencils of equal length ($M-K_{\rm s}$ intervals) whose union is the entire stencil $\tsc{s}_{i,M_-,M_+}$.
                                                                                                       }
\begin{alignat}{6}
\tsc{s}_{i,M_--k_{\rm s},M_+-K_{\rm s}+k_{\rm s}}=\left\{i-M_-+k_{\rm s},\cdots,M_+-K_{\rm s}+k_{\rm s}\right\} \qquad\begin{array}{l}M_\pm\in\mathbb{Z}:M=M_-+M_+\geq2\\
                                                                                                                                      1\leq K_{\rm s}\leq M-1\\
                                                                                                                                      k_{\rm s}\in\{0,\cdots,K_{\rm s}\}\\\end{array}
                                                                                                       \label{Eq_AELRP_s_EA_ss_RRPCS_001}
\end{alignat}
with appropriate weights, which are nonlinear in the cell-averages $f(x)$, to ensure monotonicity at discontinuities~\cite{vanLeer_2006a},
and such that the weighted combination of the Lagrange reconstructing polynomials on the substencils, at regions where $h(x)$ is smooth,
approximates to $O(\Delta x^{M+1})$ \cite{Jiang_Shu_1996a} or higher \cite{Henrick_Aslam_Powers_2005a} the Lagrange reconstructing polynomial on the big stencil $\tsc{s}_{i,M_-,M_+}$,
which \prpref{Prp_AELRP_s_EPR_ss_AELPR_001} is $O(\Delta x^{M+1})$-accurate. The starting point for scheme design is the determination of the
underlying linear scheme, {\em ie} the determination of weight-functions $\sigma_{h,M_-,M_+,K_{\rm s},k_{\rm s}}(\xi)$ ($k_{\rm s}\in\{0,\cdots,K_{\rm s}\}$), independent of $f(x)$,
which combine the Lagrange reconstructing polynomials on the substencils exactly into the Lagrange reconstructing polynomial on the big stencil $\tsc{s}_{i,M_-,M_+}$
\begin{subequations}
                                                                                                       \label{Eq_AELRP_s_EA_ss_RRPCS_002}
\begin{alignat}{6}
p_h(x_i+\xi\Delta x;\tsc{s}_{i,M_-,M_+},\Delta x)=\sum_{k_{\rm s}=0}^{K_{\rm s}} \sigma_{h,M_-,M_+,K_{\rm s},k_{\rm s}}(\xi)\;p_h(x_i+\xi\Delta x;\tsc{s}_{i,M_--k_{\rm s},M_+-K_{\rm s}+k_{\rm s}},\Delta x)
                                                                                                       \label{Eq_AELRP_s_EA_ss_RRPCS_002a}
\end{alignat}
Obviously, using \eqref{Eq_Prp_AELRP_s_EPR_ss_AELPR_002_001a} in \eqref{Eq_AELRP_s_EA_ss_RRPCS_002a},
the weight functions $\sigma_{h,M_-,M_+,K_{\rm s},k_{\rm s}}(\xi)$ must satisfy the consistency condition
\begin{alignat}{6}
\sum_{k_{\rm s}=0}^{K_{\rm s}}\sigma_{h,M_-,M_+,K_{\rm s},k_{\rm s}}(\xi)=1
                                                                                                       \label{Eq_AELRP_s_EA_ss_RRPCS_002b}
\end{alignat}
\end{subequations}
Shu~\cite{Shu_1998a} indicated examples of instances where it was impossible to find such weights, as well as instances where convexity of the combination was lost (presence of negative weights).
The corresponding problem for the Lagrange interpolating polynomial
\begin{subequations}
                                                                                                       \label{Eq_AELRP_s_EA_ss_RRPCS_003}
\begin{alignat}{6}
p_f(x_i+\xi\Delta x;\tsc{s}_{i,M_-,M_+},\Delta x)=\sum_{k_{\rm s}=0}^{K_{\rm s}} \sigma_{f,M_-,M_+,K_{\rm s},k_{\rm s}}(\xi)\;p_f(x_i+\xi\Delta x;\tsc{s}_{i,M_--k_{\rm s},M_+-K_{\rm s}+k_{\rm s}},\Delta x)
                                                                                                       \label{Eq_AELRP_s_EA_ss_RRPCS_003a}
\end{alignat}
\begin{alignat}{6}
\sum_{k_{\rm s}=0}^{K_{\rm s}}\sigma_{f,M_-,M_+,K_{\rm s},k_{\rm s}}(\xi)=1
                                                                                                       \label{Eq_AELRP_s_EA_ss_RRPCS_003b}
\end{alignat}
\end{subequations}
is directly related to the Neville-Aitken algorithm~\cite[pp. 11--13]{Phillips_2003a}, which constructs the interpolating polynomial on $\{i-M_-,\cdots,i+M_+\}$ by recursive combination of the
interpolating polynomials on substencils, with weight-functions which are also polynomials of $x$~\cite[pp. 11--13]{Phillips_2003a}.
Carlini \etal~\cite{Carlini_Ferretti_Russo_2005a} have given the explicit representation of the polynomial weights for the construction of the Lagrange interpolating polynomial on $\tsc{s}_{i,M_-,M_+}$,
by combination of the Lagrange interpolating polynomials on the $K_{\rm s}+1$ substencils, for a certain family of stencils/subdivisions. Liu \etal~\cite{Liu_Shu_Zhang_2009a}
have extended the family of stencils/subdivisions studied. Since for the Lagrange interpolating polynomial case the weight-functions
are polynomials, \eqref{Eq_AELRP_s_EA_ss_RRPCS_003} are valid $\forall\xi\in\mathbb{R}$.

The  corresponding problem for the Lagrange reconstructing polynomial \eqref{Eq_AELRP_s_EA_ss_RRPCS_002} was studied,
only very recently, by Liu \etal~\cite{Liu_Shu_Zhang_2009a}.
It turns out that, for the reconstruction case, the weight-functions which satisfy \eqref{Eq_AELRP_s_EA_ss_RRPCS_002}
are rational functions of $\xi$, implying that \eqref{Eq_AELRP_s_EA_ss_RRPCS_002} is valid $\forall\xi\in\mathbb{R}\setminus{\mathcal S}_{\sigma_{h,M_-,M_+,K_{\rm s}}}$,
{\em ie} everywhere except at the union ${\mathcal S}_{\sigma_{h,M_-,M_+,K_{\rm s}}}$ of the poles of the $K_{\rm s}+1$ rational functions $\sigma_{h,M_-,M_+,K_{\rm s},k_{\rm s}}(\xi)$ ($k_{\rm s}\in\{0,\cdots,K_{\rm s}\}$).
Liu \etal~\cite{Liu_Shu_Zhang_2009a} studied the family of stencils $\tsc{s}_{i,\lfloor\frac{M}{2}\rfloor,M-\lfloor\frac{M}{2}\rfloor}$,
for the $K_{\rm s}=\left\lceil\frac{M}{2}\right\rceil$-level subdivision, in the range $M\in\{2\cdots,11\}$,
and used symbolic computation to give explicit expressions of the weight-functions, and to study their poles and regions of convexity.

We highlight in the following how the identification of reconstruction pairs \defref{Def_AELRP_s_RPERR_ss_RP_001},
and the analytical expressions for the Lagrange reconstructing polynomial \prpref{Prp_AELRP_s_EPR_ss_PR_001} and its approximation error \prpref{Prp_AELRP_s_EPR_ss_AELPR_002},
can be used to develop general analytical expressions (valid $\forall M_\pm\in\mathbb{Z}:M:=M_-+M_+\geq2$ and $\forall K_{\rm s}\in\{1,\cdots,M-1\}$) for the weight-functions,
prove that there can be no poles at cell-interfaces ($n+\tfrac{1}{2}\;\forall n\in\mathbb{Z}$), and extend the important results obtained in Liu \etal~\cite{Liu_Shu_Zhang_2009a}.
It is quite straightforward, using the definition of the Lagrange reconstructing polynomial \defref{Def_AELRP_s_RPERR_ss_RP_002},
to show by \eqref{Eq_Def_AELRP_s_RPERR_ss_RP_001_001a} that
the polynomial $\alpha_{h,M_-,M_+,\ell}(\xi)$ \eqref{Eq_Prp_AELRP_s_EPR_ss_PR_001_001g} appearing in the representation \eqref{Eq_Prp_AELRP_s_EPR_ss_PR_001_001d} of the Lagrange reconstructing polynomial
on the stencil $\tsc{s}_{i,M_-,M_+}$ is the reconstruction pair, on a unit-spacing grid,
of the corresponding polynomial $\alpha_{f,M_-,M_+,\ell}(\xi)$ \eqref{Eq_Prp_AELRP_s_EPR_ss_PR_001_001h}
appearing in the representation \eqref{Eq_Prp_AELRP_s_EPR_ss_PR_001_001e} of the Lagrange interpolating polynomial on the same stencil
\begin{alignat}{6}
\alpha_{h,M_-,M_+,\ell}(\xi)=\left[R_{(1;1)}(\alpha_{f,M_-,M_+,\ell})\right](\xi)\iff\alpha_{f,M_-,M_+,\ell}(\xi)=\int_{\xi-\frac{1}{2}}^{\xi+\frac{1}{2}}\alpha_{h,M_-,M_+,\ell}(\eta)\;d\eta
\quad\left\{\begin{array}{l}\forall\;\ell\in\{-M_-,\cdots,M_+\}\\
                            \forall\xi\in{\mathbb R}\\\end{array}\right.
                                                                                                       \label{Eq_AELRP_s_EA_ss_RRPCS_004}
\end{alignat}
It is easy to prove by \eqref{Eq_AELRP_s_EA_ss_RRPCS_004}, using the mean value theorem for the definite integral~\cite[pp. 350--359]{Zorich_2004a},
and the knowledge of the $M$ roots of $\alpha_{f,M_-,M_+,\ell}(\xi)$ ($\alpha_{f,M_-,M_+,\ell}(n)=0\;\forall n\in\{M_-,\cdots,M_+\}\setminus\{\ell\}$~\cite{Henrici_1964a,
                                                                                                                                                            Phillips_2003a})
that
\begin{alignat}{6}
\alpha_{h,M_-,M_+,\ell}(n+\tfrac{1}{2})\neq 0
\quad\left\{\begin{array}{l}\forall\;\ell\in\{-M_-,\cdots,M_+\}\\
                            \forall n\in{\mathbb Z}\\\end{array}\right.
                                                                                                       \label{Eq_AELRP_s_EA_ss_RRPCS_005}
\end{alignat}
and that all of the $M$ roots of $\alpha_{h,M_-,M_+,\ell}(\xi)$ are real.
It can be shown that both $\left\{\alpha_{h,M_-,M_+,\ell}(\xi),\;\ell\in\{-M_-,\cdots,M_+\}\right\}$
and $\left\{\alpha_{f,M_-,M_+,\ell}(\xi),\;\ell\in\{-M_-,\cdots,M_+\}\right\}$ form a basis of
the $(M+1)$-dimensional vector space of polynomials of degree $\leq M$ in $\xi$, $\mathbb{R}_M[\xi]$, and therefore, none of these polynomials is identically $0$.
We can work out several identities for the polynomials $\alpha_{h,M_-,M_+,\ell}(\xi)$, with corresponding identities for $\alpha_{f,M_-,M_+,\ell}(\xi)$ because of \eqref{Eq_AELRP_s_EA_ss_RRPCS_004},
and show that an analytical expression for the rational weight-functions $\sigma_{h,M_-,M_+,K_{\rm s},k_{\rm s}}(\xi)$ is given by the recurrence\footnote{\label{ff_AELRP_s_EA_ss_RRPCS_002}
                                                                                                                                                           The recurrence relation \eqref{Eq_AELRP_s_EA_ss_RRPCS_006} for $K_{\rm s}\geq2$
                                                                                                                                                           holds also for $\sigma_{f,M_-,M_+,K_{\rm s},k_{\rm s}}(\xi)$.
                                                                                                                                                          }
\begin{alignat}{6}
&\sigma_{h  ,M_-,M_+,K_{\rm s},k_{\rm s}}(\xi)=\left\{\begin{array}{ll}\dfrac{\alpha_{h  ,M_-          ,M_+            ,-M_-+k_{\rm s}M}(\xi)}
                                                                             {\alpha_{h  ,M_--k_{\rm s},M_+-1+k_{\rm s},-M_-+k_{\rm s}M}(\xi)}
                                                                                                                                                                   &K_{\rm s}=1           \\
                                                                                                                                                                   &                      \\
                                                                      {\displaystyle\sum_{\ell_{\rm s}=\max(0,k_{\rm s}-1)}^{\min(K_{\rm s}-1,k_{\rm s})}
                                                                                    \sigma_{h  ,M_-             ,M_+,K_{\rm s}-1                 ,\ell_{\rm s}}(\xi)\;
                                                                                    \sigma_{h  ,M_--\ell_{\rm s},M_+-(K_{\rm s}-1)+\ell_{\rm s},1,k_{\rm s}-\ell_{\rm s}}(\xi)
                                                                      }                                                                                            &K_{\rm s}\geq2        \\\end{array}\right.
                                                                                                       \notag\\
&\forall k_{\rm s}=0,\cdots,K_{\rm s}\leq M-1
                                                                                                       \label{Eq_AELRP_s_EA_ss_RRPCS_006}
\end{alignat}
This analytical formulation, which only requires \eqref{Eq_Prp_AELRP_s_EPR_ss_PR_001_001g} as input, is easily programmed in any symbolic computation package, and can generate the rational
weight-functions $\forall M_\pm\in\mathbb{Z}:M=M_-+M_+\geq2$. Since all of the $M$ roots of any of the polynomials $\alpha_{h,M_-,M_+,\ell}(\xi)$ are real, by \eqref{Eq_AELRP_s_EA_ss_RRPCS_006},
we can show by induction that all of the poles of the rational weight-functions $\sigma_{h,M_-,M_+,K_{\rm s},k_{\rm s}}(\xi)$ are real.
These analytical results can then be used to extend the results of Liu \etal~\cite{Liu_Shu_Zhang_2009a} $\forall M\in\mathbb{N}_{\geq2}$, and for arbitrarily biased stencils in a homogeneous
grid, providing at the same time simple symbolic computation routines for roots and poles.

Going into further details and results is beyond the scope of the present work. We include however the following result.
For the $K_{\rm s}=\left\lceil\frac{M}{2}\right\rceil$-level subdivision of the usual \tsc{weno} stencils~\cite{Liu_Shu_Zhang_2009a} $\tsc{s}_{i,\lfloor\frac{M}{2}\rfloor,M-\lfloor\frac{M}{2}\rfloor}$,
we know from direct computation~\cite{Liu_Shu_Zhang_2009a} that the weight-functions
$\sigma_{h,\left\lfloor\frac{M}{2}\right\rfloor,M-\left\lfloor\frac{M}{2}\right\rfloor,\left\lceil\frac{M}{2}\right\rceil,k_{\rm s}}(\tfrac{1}{2})\geq0$.
Using the analytical expression \eqref{Eq_AELRP_s_EA_ss_RRPCS_006} we have obtained computationally the following result
%
\begin{result}[Positivity of linear weights at $i+\tfrac{1}{2}$]
\label{Rst_AELRP_s_EA_ss_RRPCS_001}
\begin{subequations}
                                                                                                       \label{Eq_AELRP_s_EA_ss_AJSSI_001}
Assume that $|M_\pm|\leq 9$, satisfying \eqref{Eq_AELRP_s_EA_ss_RRPCS_001}.
Then if for the subdivision level $K_{\rm s}$ of $\tsc{s}_{i,M_-,M_+}$ \eqref{Eq_AELRP_s_EA_ss_RRPCS_001} all substencils contain either point $i$ or point $i+1$
\begin{equation}
\tsc{s}_{i,M_--k_{\rm s},M_+-K_{\rm s}+k_{\rm s}}\cap\{i,i+1\}\neq\varnothing\quad \forall k_{\rm s}\in\{0,\cdots,K_{\rm s}\}
\iff\left\{\begin{array}{l}-M_-\leq0<M_+\\
                           K_{\rm s}\leq\min(M_-+1,M_+)>0\\\end{array}\right.
                                                                                                       \label{Eq_Rst_AELRP_s_EA_ss_RRPCS_001_001a}
\end{equation}
then the rational weight-functions \eqref{Eq_AELRP_s_EA_ss_RRPCS_006} satisfy
\begin{equation}
\sigma_{h,M_-,M+,K_{\rm s},k_{\rm s}}(\tfrac{1}{2}) >0\qquad\forall k_{\rm s}\in\{0,\cdots,K_{\rm s}\}
                                                                                                       \label{Eq_Rst_AELRP_s_EA_ss_RRPCS_001_001b}
\end{equation}
\end{subequations}
\qed
\end{result}
%

%
%
%
%
%
\subsection{Truncation error of \tsc{weno} approximations to $f'(x)$}\label{AELRP_s_EA_ss_TEWENOAdfdx}
%
%
%
%
%

Nonlinearity, ensuring monotonicity~\cite{vanLeer_2006a}, in \tsc{weno} schemes is introduced by nonlinear weighting-out of stencils for which
the reconstructing polynomial is nonsmooth. Smoothness is almost invariably~\cite{Jiang_Shu_1996a,
                                                                                  Shu_1998a,
                                                                                  Levy_Puppo_Russo_1999a,
                                                                                  Balsara_Shu_2000a,
                                                                                  Qiu_Shu_2002a,
                                                                                  Qiu_Shu_2003a,
                                                                                  Henrick_Aslam_Powers_2005a,
                                                                                  Borges_Carmona_Costa_Don_2008a,
                                                                                  Shu_2009a,
                                                                                  Gerolymos_Senechal_Vallet_2009a}
measured using the Jiang-Shu smoothness indicators~\cite{Jiang_Shu_1996a}. Let
$u:\mathbb{R}\longrightarrow\mathbb{R}$, $M\in\mathbb{N}_{\geq1}$ and $\Delta x\in\mathbb{R}_{>0}$, and define
\begin{subequations}
                                                                                                       \label{Eq_AELRP_s_EA_ss_AJSSI_001}
\begin{alignat}{6}
\beta_M(x;\Delta x;u):=&\sum_{k=1}^{M}\int_{x-\frac{1}{2}\Delta x}^{x+\frac{1}{2}}
                                           {\Delta x^{2k-1}\left(\frac{d^k }
                                                                      {dx^k} u(\zeta)\right)^2d\zeta}
                                                                                                       \label{Eq_AELRP_s_EA_ss_TEWENOAdfdx_001a}\\
                                =&\sum_{k=1}^{M}\int_{-\tfrac{1}{2}}^{\tfrac{1}{2}}
                                                {               \left(\frac{d^k }
                                                                           {d\xi^k} u(x+\xi \Delta x)\right)^2d\xi}
                                                                                                       \label{Eq_AELRP_s_EA_ss_TEWENOAdfdx_001b}
\end{alignat}
By \eqref{Eq_AELRP_s_EA_ss_TEWENOAdfdx_001b} it is seen that, upon normalization by $\Delta x$, $\beta_M(x;\Delta x;u)$
is~\cite{Tartar_2007a} the usual norm of $\left(d_\xi u\right)$ in the Sobolev space $H^{M-1}\Bigl((-\tfrac{1}{2},+\tfrac{1}{2})\Bigr):=W^{M-1,2}\Bigl((-\tfrac{1}{2},+\tfrac{1}{2})\Bigr)$.
The Jiang-Shu smoothness indicator, when considering reconstruction at $x_{i+\frac{1}{2}}$ on $\tsc{s}_{i,M_-,M_+}$ \crlref{Crl_AELRP_s_EPR_ss_AEhdf_001},
is defined\footnote{\label{ff_AELRP_s_EA_ss_TEWENOAdfdx_001}
                    The interval of integration was defined by Jiang and Shu~\cite{Jiang_Shu_1996a} as the cell $[x_{i-\frac{1}{2}},x_{i+\frac{1}{2}}]$, with center the pivot point $x_i$ where we
                    wish to approximate the derivative $f_i':=f'(x_i)$. This introduces some sort of upwinding, since the interval $[x_i,x_{i+1}]$ could
                    have been used (this would correspond to using $\beta_M(x_{i+\frac{1}{2}};\Delta x;p_h(\cdot;\tsc{s}_{i,M_-,M_+},\Delta x))$ instead in \eqref{Eq_AELRP_s_EA_ss_TEWENOAdfdx_003}.
                    This is the choice made in the context of central \tsc{weno} interpolation by Carlini \etal~\cite{Carlini_Ferretti_Russo_2005a}.
                    The choice of $[x_i,x_{i+1}]$ as the integration interval for upwind-biased \tsc{weno} schemes has not been studied. Nonetheless,
                    in the context of cell-centered finite-volume schemes~\cite{Shi_Hu_Shu_2002a} the choice of the cell as the volume of integration seems natural.
                   }~\cite{Jiang_Shu_1996a,
                           Shu_1998a,
                           Balsara_Shu_2000a,
                           Shu_2009a}
by
\begin{alignat}{6}
\beta_{p_h,\tsc{s}_{i,M_-,M_+},i+\frac{1}{2}}:=&\beta_M\Big(x_i;\Delta x;p_h(\cdot;\tsc{s}_{i,M_-,M_+},\Delta x)\Big)
                                                                                                       \label{Eq_AELRP_s_EA_ss_TEWENOAdfdx_003}
\end{alignat}
\end{subequations}
with, as usual, $M:=M_-+M_+$.
The realization of the optimal order-of-accuracy by the \tsc{weno} schemes hinges upon the fact~\cite{Jiang_Shu_1996a,
                                                                                                      Henrick_Aslam_Powers_2005a,
                                                                                                      Borges_Carmona_Costa_Don_2008a,
                                                                                                      Shu_2009a}
that $\beta_{p_h,\tsc{s}_{i,M_-,M_+},i+\frac{1}{2}}$ for different stencils of equal length $M:=M_-+M_+$ but different biasing around the pivot-point $x_i$ only differ to $O(\Delta x^{M+2})$, the lower-order
part being common to all stencils of equal length $M$. Expansions in powers of $\Delta x^s f_i^{n}f_i^{s-n}$ ($n\leq\lfloor\frac{s}{2}\rfloor$) have been given in the literature,
up to the \tscn{weno}{$17$} (composed by 9 substencils of length $M=8$ cells~\cite{Gerolymos_Senechal_Vallet_2009a}), using symbolic calculation~\cite{Jiang_Shu_1996a,
                                                                                                                                                       Henrick_Aslam_Powers_2005a,
                                                                                                                                                       Borges_Carmona_Costa_Don_2008a,
                                                                                                                                                       Gerolymos_Senechal_Vallet_2009a}.
Using the analytical expressions for the error of the reconstructing polynomial with respect to the unknown function $h(x)$ which is reconstructed,
{\em eg} \eqref{Eq_Prp_AELRP_s_EPR_ss_AELPR_002_001c}, \eqref{Eq_Prp_AELRP_s_EPR_ss_AELPR_001_001c} or \eqref{Eq_Prp_AELRP_s_EPR_ss_AELPR_001_001d},
it is quite straightforward to explain the existence of the common part.
Using \eqref{Eq_Prp_AELRP_s_EPR_ss_AELPR_002_001a} in \eqref{Eq_AELRP_s_EA_ss_TEWENOAdfdx_003} yields, by \eqref{Eq_AELRP_s_EA_ss_TEWENOAdfdx_001b}
\begin{alignat}{6}
\beta_{p_h,\tsc{s}_{i,M_-,M_+},i+\frac{1}{2}}=&\sum_{k=1}^{M}\int_{-\tfrac{1}{2}}^{\tfrac{1}{2}}
                                                 {               \left(\frac{d^k }
                                                                            {d\xi^k}\biggl(h(x_i+\xi\Delta x)+E_h(x_i+\xi\Delta x;\tsc{s}_{i,M_-,M_+},\Delta x)\biggr)\right)^2d\xi}
                                                                                                       \notag\\
                                             =&\sum_{k=1}^{M}\int_{-\tfrac{1}{2}}^{\tfrac{1}{2}}
                                                 {               \left(\frac{d^k }
                                                                            {d\xi^k}\biggl(h(x_i+\xi\Delta x)\biggr)\right)^2d\xi}
                                                                                                       \notag\\
                                             +&\sum_{k=1}^{M}\int_{-\tfrac{1}{2}}^{\tfrac{1}{2}}
                                                 {               \left(2\left(\frac{d^k }
                                                                                   {d\xi^k}h(x_i+\xi\Delta x)\right)
                                                                        \left(\frac{d^k }
                                                                                   {d\xi^k}E_h(x_i+\xi\Delta x;\tsc{s}_{i,M_-,M_+},\Delta x)\right)
                                                                       +\left(\frac{d^k }
                                                                                   {d\xi^k}E_h(x_i+\xi\Delta x;\tsc{s}_{i,M_-,M_+},\Delta x)\right)^2\right)d\xi}
                                                                                                       \label{Eq_AELRP_s_EA_ss_TEWENOAdfdx_004}
\end{alignat}
{\em ie} the common (stencil-independent) part is indeed the Sobolev norm of $d_\xi h(x_i+\xi\Delta x)$ and the non-common part involves the approximation error of the Lagrange reconstructing
polynomial \prpref{Prp_AELRP_s_EPR_ss_AELPR_002}, and is therefore stencil-dependent. The deconvolution \lemrefnp{Lem_AELRP_s_RPERR_ss_D_001} is necessary to compute analytically the expansion of
the common part in terms of powers $\Delta x^s f_i^{n}f_i^{s-n}$ ($n\leq\lfloor\frac{s}{2}\rfloor$), and \eqref{Eq_Prp_AELRP_s_EPR_ss_AELPR_001_001c} combined with the deconvolution \lemrefnp{Lem_AELRP_s_RPERR_ss_D_001}
is used to evaluate analytically the expansion of the stencil-dependent part of \eqref{Eq_AELRP_s_EA_ss_TEWENOAdfdx_004}, whose knowledge is essential for the evaluation and
improvement~\cite{Jiang_Shu_1996a,
                  Henrick_Aslam_Powers_2005a,
                  Borges_Carmona_Costa_Don_2008a}
of the design of nonlinear weights, especially when interested in developing weights maintaining one of the great advantages of the Jiang-Shu weights, {\em viz} the straightforward
extension to arbitrarily high-order accuracy~\cite{Balsara_Shu_2000a,
                                                   Gerolymos_Senechal_Vallet_2009a}.
Furthermore these expressions were used to compute analytically the leading 2 terms of the asymptotic expansions of the Jiang-Shu nonlinear weights~\cite{Jiang_Shu_1996a}
and from these the leading term of the truncation error of \tsc{weno} and \tsc{wenom}~\cite{Henrick_Aslam_Powers_2005a} schemes. These developments are quite lengthy, and will be reported elsewhere.

%
%
%
%
%
\subsection{Extension to higher derivatives by multiple reconstruction}\label{AELRP_s_EA_ss_WHDMR}
%
%
%
%
%

The reconstruction approach can also be used to approximate $f''(x)$ (and in general $f^{(n)}(x)$), and in particular to compute interface fluxes,
for high-order conservative discretization of diffusive terms in finite-volume methods, {\em eg} in the direct numerical simulation (\tsc{dns}) of turbulence~\cite{Gerolymos_Senechal_Vallet_2010a}.

Assume the conditions of \defrefnp{Def_AELRP_s_RPERR_ss_RP_001}, for $f(x)$ and $h(x)$, but defined on $I=[a-\Delta x,b+\Delta x]\subset{\mathbb R}$, and assume that
$\exists\;\mathfrak{h}:I\longrightarrow{\mathbb R}$ which satisfies
\begin{subequations}
                                                                                                        \label{Eq_AELRP_s_EA_ss_WHDMR_001}
\begin{alignat}{6}
\mathfrak{h}=R_{(1;\Delta x)}(h)\;\stackrel{\eqref{Eq_Def_AELRP_s_RPERR_ss_RP_001_001a}}{\Longrightarrow}&
h(x)=\dfrac{1}{\Delta x}\int_{x-\frac{1}{2}\Delta x}^{x+\frac{1}{2}\Delta x}{\mathfrak{h}(\zeta)d\zeta}             &\forall x\in[a-\tfrac{1}{2}\Delta x,b+\tfrac{1}{2}\Delta x]
                                                                                                        \label{Eq_AELRP_s_EA_ss_WHDMR_001a}\\
                                  \stackrel{\eqref{Eq_Lem_AELRP_s_RPERR_ss_RP_001_001}}{\Longrightarrow}&
h^{(n)}(x)=\dfrac{\mathfrak{h}^{(n-1)}(x+\frac{1}{2}\Delta x)-\mathfrak{h}^{(n-1)}(x-\frac{1}{2}\Delta x)}{\Delta x}&\;\begin{array}{l}\forall x\in[a-\tfrac{1}{2}\Delta x,b+\tfrac{1}{2}\Delta x]\\
                                                                                                                                       \forall n\in\{1,\cdots,N\}\\\end{array}
                                                                                                        \label{Eq_AELRP_s_EA_ss_WHDMR_001b}
\end{alignat}
\end{subequations}
assuming $f,h,\mathfrak{h}\in C^N[a-\Delta x,b+\Delta x]$, with $N\in\mathbb{N}_{\geq2}$.
Recall \rmkref{Rmk_AELRP_s_RPERR_ss_D_002} that reconstruction pairs of continuous functions, if they exist, are unique.
Combining \eqref{Eq_Def_AELRP_s_RPERR_ss_RP_001_001a} with \eqref{Eq_AELRP_s_EA_ss_WHDMR_001a}, we have
\begin{alignat}{6}
f(x)=[R^{-1}_{(1;\Delta x)}(h)](x)\stackrel{\eqref{Eq_AELRP_s_EA_ss_WHDMR_001a}}{=}[\underbrace{R^{-1}_{(1;\Delta x)}\circ R^{-1}_{(1;\Delta x)}}_{R^{-1}_{(2;\Delta x)}}\left(\mathfrak{h}\right)](x)
                                                                                =\dfrac{1}{\Delta x^2}\int_{   x-\frac{1}{2}\Delta x}^{   x+\frac{1}{2}\Delta x}
                                                                                                \left(\int_{\eta-\frac{1}{2}\Delta x}^{\eta+\frac{1}{2}\Delta x}{\mathfrak{h}(\zeta)d\zeta}\right)d\eta
                                                                                                        \label{Eq_AELRP_s_EA_ss_WHDMR_002}
\end{alignat}
and we may write $\mathfrak{h}=R_{(2;\Delta x)}(f)$ the reconstruction pair of $f(x)$ for the computation of the 2-derivative.
Differentiating \eqref{Eq_AELRP_s_EA_ss_WHDMR_002} twice with respect to $x$, we readily obtain the \underline{exact} relation
\begin{subequations}
                                                                                                        \label{Eq_AELRP_s_EA_ss_WHDMR_003}
\begin{alignat}{6}
f''(x)=&\dfrac{\mathfrak{h}(x+\Delta x)-2\mathfrak{h}(x)+\mathfrak{h}(x-\Delta x)}{\Delta x^2}
                                                                                                        \label{Eq_AELRP_s_EA_ss_WHDMR_003a}\\
      =&\dfrac{1}{\Delta x}\Bigg[\underbrace{\dfrac{\mathfrak{h}(x+\Delta x)-\mathfrak{h}(x         )}{\Delta x}}_{\displaystyle\stackrel{\eqref{Eq_AELRP_s_EA_ss_WHDMR_001b}}{=}h'(x+\tfrac{1}{2}\Delta x)}
                                -\underbrace{\dfrac{\mathfrak{h}(x         )-\mathfrak{h}(x-\Delta x)}{\Delta x}}_{\displaystyle\stackrel{\eqref{Eq_AELRP_s_EA_ss_WHDMR_001b}}{=}h'(x-\tfrac{1}{2}\Delta x)}\Bigg]
                                                                                                        \label{Eq_AELRP_s_EA_ss_WHDMR_003b}
\end{alignat}
\end{subequations}
By successive application of the deconvolution \lemrefnp{Lem_AELRP_s_RPERR_ss_D_001}, assuming the conditions of \lemrefnp{Lem_AELRP_s_RPERR_ss_D_001},
we can obtain the deconvolution relations for $\mathfrak{h}(x)$
\begin{subequations}
                                                                                                       \label{Eq_AELRP_s_EA_ss_WHDMR_004}
\begin{alignat}{6}
f^{(n)}(x)=&\sum_{\ell=0}^{\lfloor\frac{N_\tsc{tj}}{2}\rfloor}\left(\sum_{s=0}^{\ell}\binom{2\ell+2}{2s+1}\right)
                                                              \dfrac{\Delta x^{2\ell}     }
                                                                    {2^{2\ell}\;(2\ell+2)!} \mathfrak{h}^{(n+2\ell)}(x)+O(\Delta x^{2\lfloor\frac{N_\tsc{tj}}{2}\rfloor+2})
&\;\begin{array}{c}\forall x\in[a,b]                                                \\
                   \forall n\in{\mathbb N}_0:n<N-2\lfloor\frac{N_\tsc{tj}}{2}\rfloor\\\end{array}
                                                                                                       \label{Eq_AELRP_s_EA_ss_WHDMR_004a}
\end{alignat}
\begin{alignat}{6}
\mathfrak{h}^{(n)}(x)=&\sum_{\ell=0}^{\lfloor\frac{N_\tsc{tj}}{2}\rfloor}\left(\sum_{s=0}^{\ell}\tau_{2s}\tau_{2\ell-2s}\right)\Delta x^{2\ell}f^{(n+2\ell)}(x)+O(\Delta x^{2\lfloor\frac{N_\tsc{tj}}{2}\rfloor+2})
&\;\begin{array}{c}\forall x\in[a,b]                                                \\
                   \forall n\in{\mathbb N}_0:n<N-2\lfloor\frac{N_\tsc{tj}}{2}\rfloor\\\end{array}
                                                                                                       \label{Eq_AELRP_s_EA_ss_WHDMR_004b}
\end{alignat}
\end{subequations}
and redevelop all the results concerning $R_{(1;\Delta x)}$ for $R_{(2;\Delta x)}$, and in general for reconstruction procedures determining the interface fluxes for the computation
of $f''(x)$. In an analogous manner we can tackle the important practical problem of very-high-order conservative discretizations of $(f_\tsc{a}(x)f_\tsc{b}'(x))'$~\cite{Zingg_DeRango_Nemec_Pulliam_2000a}.

%
%
%
%
%
%
%
%
%
\section{Conclusions}\label{AELRP_s_C}
%
%
%
%
%
%
%
%
%

The results in this paper concern both the general relations between two functions constituting a reconstruction pair \defref{Def_AELRP_s_RPERR_ss_RP_001},
and the analysis of the approximation error of the reconstructing polynomial \defref{Def_AELRP_s_RPERR_ss_RP_002}.

We call a function $h(x)$ whose sliding averages over a constant length $\Delta x$ are equal to $f(x)$ the reconstruction pair
of $f(x)$, $h=R_{(1;\Delta x)}(f)$ \defref{Def_AELRP_s_RPERR_ss_RP_001}.
The exact relations $\Delta x f^{(n)}(x)=h^{(n-1)}(x+\tfrac{1}{2}\Delta x)-h^{(n-1)}(x-\tfrac{1}{2}\Delta x)$ \eqref{Eq_Lem_AELRP_s_RPERR_ss_RP_001_001} are the basis
of the numerical approximation of $f'(x)$ by reconstruction~\cite{Harten_Osher_1987a,
                                                                  Harten_Engquist_Osher_Chakravarthy_1987a,
                                                                  Shu_Osher_1988a,
                                                                  Shu_Osher_1989a}.

The reconstruction pair of the exponential function is $[R_{(1;\Delta x)}({\rm exp})](x)=g_\tau(\Delta x){\rm e}^x$ \thmref{Thm_AELRP_s_RPERR_ss_GFtaunRPexp_001}.
The function $g_\tau(x)$ \eqref{Eq_Thm_AELRP_s_RPERR_ss_GFtaunRPexp_001_001b} is the generating function of the numbers $\tau_n$ \tabref{Tab_Lem_AELRP_s_RPERR_ss_D_001_001}
satisfying recurrence \eqref{Eq_Lem_AELRP_s_RPERR_ss_D_001_001c}.
The numbers $\tau_n$ \eqref{Eq_Thm_AELRP_s_RPERR_ss_GFtaunRPexp_001_001c} define the coefficients of the analytical solution \lemref{Lem_AELRP_s_RPERR_ss_D_001}
of the deconvolution problem for Taylor polynomials~\cite[(3.13), pp. 244--246]{Harten_Engquist_Osher_Chakravarthy_1987a}.
This analytical solution \lemref{Lem_AELRP_s_RPERR_ss_D_001} is one of the main results of this work. It was also obtained by an alternative matrix-algebra oriented approach (\S\ref{AELRP_s_RP_ss_MIPPRPLem}).

The reconstruction pair of a polynomial of degree $M\in{\mathbb N}$ is also a polynomial of degree $M$ \lemref{Lem_AELRP_s_RP_ss_PRP_001},
whose coefficients can be explicitly determined by \eqref{Eq_Lem_AELRP_s_RP_ss_PRP_001_001f} using the numbers $\tau_n$ \tabref{Tab_Lem_AELRP_s_RPERR_ss_D_001_001},
$R_{(1;\Delta x)}$ being a bijection of the vector space of polynomials of degree $\leq M\in{\mathbb N}$ onto itself \thmref{Thm_AELRP_s_PRBE_ss_PR_001}.

The Lagrange reconstructing polynomial \defref{Def_AELRP_s_RPERR_ss_RP_002} on an arbitrary stencil $\tsc{s}_{i,M_-,M_+}:=\left\{i-M_-,\cdots,i+M_+\right\}$, on a homogeneous grid,
in the neighbourhood of point $i$ \defref{Def_AELRP_s_EPR_ss_PR_001}, is the reconstruction pair of the Lagrange interpolating polynomial on $\tsc{s}_{i,M_-,M_+}:=\left\{i-M_-,\cdots,i+M_+\right\}$.
The Lagrange reconstructing polynomial on $\tsc{s}_{i,M_-,M_+}$ is of degree $M:=M_-+M_+$ \prpref{Prp_AELRP_s_EPR_ss_PR_001} and approximates $h(x)$ to $O(\Delta x^{M+1})$ \prpref{Prp_AELRP_s_EPR_ss_AELPR_002}.
The complete expansion \eqref{Eq_Prp_AELRP_s_EPR_ss_AELPR_002_001a} of the approximation error of the Lagrange reconstructing polynomial
in terms of powers of $\Delta x$ can be expressed using the polynomials $\lambda_{h,M_-,M_+,n}(\xi)$ defined by \eqref{Eq_Prp_AELRP_s_EPR_ss_AELPR_002_001c}.
Most of the standard results of existence and uniqueness of the interpolating polynomial apply to the reconstructing polynomial \thmref{Thm_AELRP_s_PRBE_ss_IRP_001}.

Typical applications include the analytical expression and results on the roots and poles of the rational weight-functions combining the Lagrange reconstructing
polynomials on substencils of $\tsc{s}_{i,M_-,M_+}:=\left\{i-M_-,\cdots,i+M_+\right\}$ into the Lagrange reconstructing polynomial on the entire stencil, 
the analytical expression of the Taylor expansions of the Jiang-Shu~\cite{Jiang_Shu_1996a} smoothness indicators and of the truncation error of \tsc{weno} schemes, and
the analysis of the discretization of $f^{(n)}(x)$ by $n$-reconstruction. It is hoped that the theoretical relations on reconstruction pairs and the analytical expressions
of the approximation error of the reconstructing polynomial will be useful in the analysis and improvement of practical discretization schemes.

%
%
%
%
%
%
%
%
%
\appendix
%
%
%
%
%
%
%
%
%
%
%
%
%
%
%
%
%
%
\section{Useful relations for summation indices}\label{AELRP_s_AppendixA}
%
%
%
%
%
%
%
%
%

We summarize here several relations~\cite{Knuth_1992a,
                                          Graham_Knuth_Patashnik_1994a} used in the text to manipulate the limits of summation indices, and some other useful formulas.
\begin{equation}
\begin{array}{ccc}\alpha\leq n     &\iff& \lceil \alpha\rceil \leq n                  \\
                                   &    &                                             \\
                  \alpha  <  n     &\iff& \lfloor\alpha\rfloor  <  n                  \\
                                   &    &                                             \\
                  n       <  \beta &\iff& n                     < \lceil \beta\rceil  \\
                                   &    &                                             \\
                  n     \leq \beta &\iff& n                   \leq \lfloor\beta\rfloor\\\end{array}\qquad\qquad\begin{array}{lcc}\forall\alpha,\beta&\in&{\mathbb R}\\
                                                                                                                               \forall  n           &\in&{\mathbb Z}\\\end{array}
                                                                                                       \label{Eq_AELRP_s_AppendixA_001}
\end{equation}
\begin{equation}
\begin{array}{ccc} s    \leq 2k     &\iff& \lceil \frac{s}{2}\rceil \leq k                         \\
                                    &    &                                                         \\
                   s      <  2k     &\iff& \lfloor\frac{s}{2}\rfloor  <  k                         \\
                                    &    &                                                         \\
                  2k      <   s     &\iff& k                          < \lceil  \frac{s}{2}\rceil  \\
                                    &    &                                                         \\
                  2k    \leq  s     &\iff& k                        \leq \lfloor\frac{s}{2}\rfloor \\\end{array}\qquad\qquad\forall s,k \in{\mathbb Z}
                                                                                                       \label{Eq_AELRP_s_AppendixA_002}
\end{equation}
\begin{alignat}{6}
\sum_{n=N_{\min}}^{N_{\max}}\sum_{m=M_{\min}}^{M_{\max}} a_{nm} = \sum_{s=N_{\min}+M_{\min}}^{N_{\max}+M_{\max}}\sum_{n=\max(N_{\min},s-M_{\max})}^{\min(N_{\max},s-M_{\min})} a_{n,s-n}
                                                                = \sum_{s=N_{\min}+M_{\min}}^{N_{\max}+M_{\max}}\sum_{m=\max(M_{\min},s-N_{\max})}^{\min(M_{\max},s-N_{\min})} a_{s-m,m}
                                                                                                       \label{Eq_AELRP_s_AppendixA_003}
\end{alignat}
\begin{equation}
\frac{1        }
     {\ell+2k+1}\binom{\ell+2k+1}
                      {\ell     }=\frac{1   }
                                       {2k+1}\binom{\ell+2k}
                                                   {2k     }
                                                                                                       \label{Eq_AELRP_s_AppendixA_004}
\end{equation}

%
%
%
%
%
%
%
%
%
\section*{Acknowledgments}
%
%
%
%
%
%
%
%
%

The present work was partly supported by the \tsc{eu}-funded research project ProBand,
(\tsc{strep}--\tscn{fp}{$6$} \tscn{ast}{$4$}--\tsc{ct}--{\small 2005}--{\small 012222}).
Computations were performed using \tsc{hpc} resources from \tsc{genci--idris} (Grants 2010--066327 and 2010--022139).

Symbolic calculations were performed using {\tt maxima} ({\tt http://sourceforge.net/projects/maxima}).
The corresponding package {\tt reconstruction.mac} is available at {\tt http://www.aerodynamics.fr}.

%
%
%
%
%
%
%
%
%
%
%
%
%
%
%
%
%
%

%
%
%
%

\end{document}